\newcommand{\NNN}{\mathbb{N}}
\newcommand{\RRR}{\mathbb{R}}
\newcommand{\card}{\#}    
\newcommand{\AAa}{A}      
\newcommand{\CCc}{\mathcal{C}}      
\newcommand{\eps}{\varepsilon}
\newcommand{\ubar}[1]{\text{\b{$#1$}}}
\newcommand{\iii}{\mathcal{I}}
\newcommand{\kkk}{\mathcal{K}}
\newcommand{\dens}{{d}}
\newcommand{\recog}{\varrho}
\newcommand{\lang}{\mathscr{L}}    
\newcommand{\recplot}{\mathcal{R}}
\newcommand{\emptyword}{o}
\newcommand{\word}[3]{#1_{[ #2 , #3)}}
\newcommand{\linepattern}[3]{#2^{#1 #3}}
\newcommand{\indexoflines}[3]{\mathcal{I}_{#1}^{#2 #3}}
\newcommand{\abs}[1]{|#1|}
 \def\@seccntformat#1{\csname the#1\endcsname.\quad}
\theoremstyle{plain}
\newtheorem{theorem}{Theorem}[section]
\newtheorem{proposition}[theorem]{Proposition}
\newtheorem{corollary}[theorem]{Corollary}
\newtheorem{lemma}[theorem]{Lemma}
\theoremstyle{definition}
\newtheorem{definition}[theorem]{Definition}
\theoremstyle{remark}
\newtheorem{remark}[theorem]{Remark}
\newtheorem*{notation}{Notation}
\theoremstyle{remark}
\newtheorem{example}[theorem]{Example}
\numberwithin{equation}{section}
\begin{document}

\bibliographystyle{amsplain}

\title[Symbolic recurrence plot for uniform binary substitutions]
 {Symbolic recurrence plot for uniform binary substitutions}

\author[M. Pol\'akov\'a]{Miroslava Pol\'akov\'a}
\address{
  Department of~Mathematics, Faculty of~Natural Sciences, Matej Bel University, Tajovsk\'eho~40,
  Bansk\'a Bystrica, Slovakia
}
\email{miroslava.sartorisova@umb.sk}

\subjclass[2020]{Primary 37B10; Secondary 68R15}

\keywords{Symbolic recurrence plot, substitution, recognizability, line-pattern, density.}

\begin{abstract}
Diagonal lines in symbolic recurrence plots are closely related to the identification and characterization of specific biprolongable words within a sequence.
In this paper we focus on the recurrence plot of a fixed point of a uniform binary substitution.
We show that, if the substitution is primitive and aperiodic, 
the set of all diagonal line lengths of the recurrence plot has zero density. 
However, if a line of a specific length exists in the recurrence plot, 
the density of (the set of starting points of) all diagonal lines with that length is strictly positive.
On the other hand, we demonstrate that the recurrence plot of a non-primitive substitution contains lines of any given length. Nonetheless, for any given length, the density of lines with that length is zero.
\end{abstract}
\maketitle
\thispagestyle{empty}

\section{Introduction}\label{S:intro}
Recurrence plots, introduced by Eckmann, Kamphorst, and Ruelle \cite{eckmann1987recurrence} in 1987, provide a visual representation of recurrences in a dynamical system, contributing to a deeper understanding of the system's behavior.
Diagonal lines help to visualize the structure and repetitions within the data, making recurrence plots a powerful tool to analyze time series and dynamical systems.
Length and density of diagonal lines in the recurrence plot provide valuable insights into the dynamics of the system
and play a fundamental role in recurrence quantification analysis \cite{zbilut1992embeddings,webber1994dynamical},
see also \cite{webber2015recurrence,marwan2023trends}. Since we deal with binary sequences, we employ the so-called 
symbolic recurrence plots \cite{faure2010recurrence}, which are sufficient for a complete recurrence quantification analysis
of symbolic sequences.

Substitution sequences form a specific class of sequences; they are generated by substitution rules applied to an initial letter or word.
Sequences generated as fixed points of substitutions offer a wide range of examples,
from trivial to complex ones.
In this paper, our focus is on constant length (uniform) substitutions over a binary alphabet.
To study these sequences, we utilize diagonal lines in recurrence plots,
which closely relate to identifying words in the sequence that 
are left-and-right biprolongable, that is, 
words which can be extended by two coordinatewise different pairs of letters as a prefix and a suffix at the same time.
Words that generate these diagonal lines in recurrence plots, along with their associated prefix and suffix, will be referred to as inner line-patterns, which we now briefly introduce.

Let $\zeta$ be a primitive aperiodic substitution of constant length on the binary alphabet $\AAa=\{0,1\}$.
If $\zeta(0)$ starts with $0$, then $\zeta$ admits a fixed point $x = \zeta^\infty(0)\in\AAa^{\NNN_0}$.
An (infinite) \emph{symbolic recurrence plot} $\recplot=\recplot(x,\infty,\eps_0=1/2)$
of $x$ is the infinite $01$-matrix $(\recplot_{ij})_{i,j\in\NNN_0}$ 
such that $\recplot_{ij}=1$ if and only if $x_i=x_j$.
A diagonal line in a symbolic recurrence plot corresponds to a repetition
of a word in $x$ that cannot be prolonged from either the left or the right.
If a line does not start at the boundary of the recurrence plot, it is called an inner line.
For the details, see Subsection~\ref{SUBS:RecPlot}.

Let $\alpha$ ($\beta$) denote the length of the longest common prefix (suffix, respectively) of $\zeta(0)$ and $\zeta(1)$.
An \emph{(inner) line-pattern} $P=\linepattern{a}{w}{b}$ is a triple of a nonempty word $w$ 
and letters $a,b \in \AAa$ such that there exist $i,j \in \NNN$ satisfying 
\begin{equation*}
	awb = \word{x}{i-1}{i+\abs{w}+1}
	\quad\text{and}\quad
	\bar{a}w\bar{b} = \word{x}{j-1}{j+\abs{w}+1}
\end{equation*}
(refer to Definition~\ref{DEF:line-pattern} and accompanying notes); here $\bar{0}=1$ and $\bar{1}=0$. The set of all such pairs $(i,j)$ that satisfy these conditions is denoted by $\iii(P)$. Additionally, the length of $w$ is denoted by $\abs{P}$. 

The key concept is the notion of induced line-patterns. A line-pattern $P=\linepattern{a}{w}{b}$ is said to be \emph{induced} by a line-pattern $Q=\linepattern{c}{v}{d}$ if
\begin{equation*}
	awb = \word{\zeta(c)}{q-\beta-1}{q}\zeta(v)\word{\zeta(d)}{0}{\alpha+1}
	\quad\text{and}\quad
	\bar{a}w\bar{b} = \word{\zeta(\bar c)}{q-\beta-1}{q}\zeta(v)\word{\zeta(\bar d)}{0}{\alpha+1}
\end{equation*}	
(see Definition~\ref{DEF:induced} and Proposition~\ref{P:substitution-of-line-patterns}). 
As every inner line-pattern induces an infinite chain of line-patterns with increasing lengths,
every inner line in the recurrence plot of $x = \zeta^\infty(0)$ induces infinitely many longer and longer lines.
To distinguish whether a given line (line-pattern) was induced by some shorter one, we need to employ recognizability.
Recognizability enables us to uniquely decompose any (sufficiently long) word in $x$
in an appropriate way
and thus, for every sufficiently long line (line-pattern), find out its ``predecessor'', that is,
a line (line-pattern) from which it was induced (see Subsection~\ref{SUBS:recognizability}).
The following proposition summarizes these results on ``desubstitution'' of line-patterns. There, 
$\recog \in \NNN_0$ is the smallest integer larger than $\alpha + \beta$ and such that every word in $x$ of length (at least) $\recog$ is recognizable. A line-pattern $P=\linepattern{a}{w}{b}$ is $\recog$-recognizable if
the length of $w$ is at least $\recog$ (and hence $w$ is recognizable).

\begin{proposition}[Desubstitution of inner line-patterns]\label{P:desubstitution-of-line-patterns}
	Let $P$ be a $\recog$-recognizable inner line-pattern.
	Then there exists a \emph{unique} inner line-pattern $Q$ such that $P$ is induced by $Q$.
	Moreover,
	\begin{equation*}	
	\iii(P) = q\iii(Q) - \beta
	\quad\text{and}\quad
	\abs{P} = q\abs{Q} + \alpha + \beta.
	\end{equation*}
\end{proposition}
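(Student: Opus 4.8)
The plan is to use recognizability to ``read back'' the substitution on the two occurrences of the word $awb$ (and $\bar a w\bar b$) that witness the line-pattern $P = \linepattern{a}{w}{b}$, and then identify the resulting decomposed word as a shorter line-pattern $Q$. Fix $i,j\in\NNN$ with $(i,j)\in\iii(P)$, so that $awb = \word{x}{i-1}{i+\abs{w}+1}$ and $\bar aw\bar b = \word{x}{j-1}{j+\abs{w}+1}$. Since $\abs{w}\ge\recog$, the word $w$ is recognizable, so there are unique ``cutting points'' expressing $\word{x}{i}{i+\abs{w}}$ (and likewise $\word{x}{j}{j+\abs{w}}$) as $\word{\zeta(u)}{r}{\,\cdot\,}$ for an appropriate factor $u$ of $x$ and offset $r$. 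First I would check that, because $\recog > \alpha+\beta$, the recognizable core is long enough that the cutting points of the two occurrences of $w$ (the one at position $i$ and the one at position $j$) are \emph{consistent}: the letters of $x$ read off around these cut points must, by aperiodicity and primitivity together with the defining property of $\alpha,\beta$, differ in exactly the coordinates that force a line-pattern $Q = \linepattern{c}{v}{d}$ one level down, with $\bar c v\bar d$ its mirror. Concretely, the recognizable decomposition of $w$ together with the prefix/suffix letters $a,b$ pins down a central factor $v$ of $x$ of the antecedent level, plus boundary letters $c,d$, and the arithmetic of block lengths (each block of $\zeta$ has the common length $q$, the common prefix has length $\alpha$, the common suffix $\beta$) yields exactly $awb = \word{\zeta(c)}{q-\beta-1}{q}\zeta(v)\word{\zeta(d)}{0}{\alpha+1}$ and the mirrored identity, i.e.\ $P$ is induced by $Q$ in the sense of the stated definition.

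The next step is to verify that $Q$ is itself an \emph{inner} line-pattern and lies inside $x$. Innerness of $P$ means its witnessing occurrences are not at the boundary of $\recplot$, i.e.\ $i,j\ge 1$ with genuine letters $a,\bar a$ to the left; pulling this back through the recognizable decomposition, the antecedent occurrences of $v$ sit at positions $q^{-1}(i+\beta)$-ish and have the letters $c,\bar c$ immediately to their left and $d,\bar d$ immediately to their right inside $x$, so $Q$ is a bona fide inner line-pattern. The index formula $\iii(P) = q\,\iii(Q)-\beta$ then follows by tracking where the occurrences land: an occurrence of $awb$ starting (as $w$) at position $i$ corresponds to an occurrence of $v$ starting at position $i'$ with $i = qi'-\beta$, and symmetrically for $j$; conversely every element of $\iii(Q)$ pushes forward under $\zeta$ to an element of $\iii(P)$ by Proposition~\ref{P:substitution-of-line-patterns}. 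The length formula $\abs{P} = q\abs{Q}+\alpha+\beta$ is then immediate from $\abs{\zeta(v)} = q\abs{v}$ plus the $\beta$ letters contributed by the suffix of $\zeta(c)$ and the $\alpha$ letters from the prefix of $\zeta(d)$.

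Finally, uniqueness. Suppose $Q' = \linepattern{c'}{v'}{d'}$ also induces $P$. Then $\zeta(v')$ appears inside $awb$ at the same place (up to the fixed $\beta,\alpha$ offsets) as $\zeta(v)$, so $\word{x}{i}{i+\abs{w}}$ admits two decompositions into $\zeta$-blocks; recognizability of $w$ forces these to coincide, whence $v=v'$ and the block alignment is the same, and then reading the boundary letters off $x$ forces $c=c'$, $d=d'$. Thus $Q$ is unique.

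I expect the main obstacle to be the \textbf{boundary bookkeeping}: showing that the recognizable core of $w$ really is long enough (this is exactly why $\recog$ is chosen $>\alpha+\beta$) so that after chopping off the at-most-$\beta$ letters on the left and at-most-$\alpha$ letters on the right that belong to the flanking $\zeta$-blocks $\zeta(c),\zeta(d)$, what remains is still a full, recognizably-decomposable word $\zeta(v)$ with $v$ nonempty — and, simultaneously, that the ``difference pattern'' between the $i$-occurrence and the $j$-occurrence propagates correctly under desubstitution, i.e.\ that $x_{i-1}\ne x_{j-1}$ and $x_{i+\abs{w}}\ne x_{j+\abs{w}}$ descend to $c\ne$ the mirror and $d\ne$ the mirror at the lower level rather than being ``absorbed'' into the common prefix/suffix. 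Handling the small-length edge cases (where $\abs{w}$ is just barely $\ge\recog$, or where a flanking block is almost entirely common prefix/suffix) is the delicate part; everything else is the length arithmetic of uniform substitutions.
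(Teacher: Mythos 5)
Your proposal is correct and follows essentially the same route as the paper: recognizability of $w$ forces both occurrences to sit at positions in $q\NNN-\beta$, the flanking letters together with the defining property of $\alpha$ and $\beta$ pin down $c,d$ at the first occurrence and their complements at the second, and uniqueness of the decomposition gives uniqueness of $Q$. The ``boundary bookkeeping'' you flag as the delicate step is exactly what the paper isolates into Lemmas~\ref{LMM:possible-lengths} and \ref{LMM:doplnenie-na-desubstituciu}, which it then applies once to $\linepattern{a}{w}{b}$ and once to $\linepattern{\bar a}{w}{\bar b}$.
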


Thus, every sufficiently long line-pattern $P$ is induced by exactly one shorter line-pattern $Q$ in such a way 
that every line in the recurrence plot given by $P$ is induced by unique line given by $Q$.
Thus, to obtain a complete description of the symbolic recurrence plot of a substitution sequence, 
it suffices to describe lines of length smaller than $\recog$.

Let $\ell\in\NNN$ and let $\kkk_\ell$ be the set of all pairs $(i,j)$ of positive integers such that 
an inner line of length $\ell$ starts at it.
We are interested in the densities of the sets $\kkk_\ell$, as they play a key role in asymptotic
analysis of several basic recurrence quantifiers, see e.g.~\cite{vspitalsky2018recurrence}.
To obtain the density $\dens(\kkk_\ell)$, one needs to use the following facts (refer to Subsection~\ref{subs:density-of-inner-lines} for the details).
First, for any given $\ell$ one can find finitely many line-patterns $P^j$ such that the set $\kkk_\ell$ 
is a disjoint union of the sets $\iii(P^j)$. Second, if $P=\linepattern{a}{w}{b}$ is a line-pattern, the density of $\dens(\iii(P))$ exists, is strictly positive, and is equal to the product of measures of cylinders
$[awb]$ and $[\bar{a}w\bar{b}]$. These two facts enables us to calculate the density $\dens(\kkk_\ell)$ for any given length $\ell$. Finally, if $\ell$ is sufficiently large (that is, $\ell\ge\recog$), 
we can derive the density of $\kkk_\ell$ from that of $\kkk_{\ell'}$ with smaller $\ell'$
using Proposition~\ref{P:desubstitution-of-line-patterns} instead.
This leads to the following theorem about the density of (the set of starting points of) inner lines in the infinite symbolic recurrence plot.

\begin{theorem}\label{THM:density}
	Let $\zeta:\AAa\to\AAa^*$ be a primitive aperiodic binary substitution of constant length $q\ge 2$ such that
	$\zeta(0)$ starts with $0$,
	and $x=\zeta^\infty(0)$ be a unique fixed point of $\zeta$ starting with $0$. 
	Let $\kkk_\ell$ ($\ell\in\NNN$) be the set of starting points of inner-lines of length $\ell$ 
	in infinite symbolic recurrence plot $\recplot(x,\infty,\eps_0)$. 
	Then, for arbitrary $\ell\in\NNN$, the following is true:
	\begin{enumerate}
		\item \label{Case1-in-THM:density} 
		$\kkk_\ell = \emptyset$ if and only if $\dens(\kkk_\ell) = 0$;

		\item \label{Case2-in-THM:density} 
		there are $r\in\NNN_0$ and unique (up to a permutation) inner line-patterns
		$P^1,\dots,P^r$ such that
		\begin{equation}
			\kkk_\ell = \bigsqcup_{j=1}^r \iii(P^j)
			\quad\text{and}\quad
			\dens(\kkk_\ell) = \sum_{j=1}^r \dens(\iii(P^j));
		\end{equation}
		further, if $P^j=\linepattern{a}{w}{b}$ then $\dens(\iii(P^j))=\mu([awb])\cdot \mu([\bar aw\bar b])>0$,
		where $\mu$ is the unique invariant measure of the $\zeta$-shift;
 		
		\item \label{Case3-in-THM:density} 
		if $\ell \geq \recog$ and 
		$\kkk_\ell \neq \emptyset$, there are unique positive integers $\ell_0$ and $k$ such that
		$\ell_0 < \recog\le q\ell_0+\alpha+\beta$, $\ell = q^k \ell_0 + c(q^k-1) $ and
		$$ 
		\dens (\kkk_\ell)
		= q^{-2k}\dens(\kkk_{\ell_0}) 
		= \frac{(\ell_0 + c)^2 }{(\ell + c)^2} \dens(\kkk_{\ell_0}) ,
		$$
		where $c = (\alpha+\beta)/(q-1) \in [ 0, 1]$.	
	\end{enumerate}
	Consequently, the set of all integers $\ell$ with $\kkk_\ell\ne\emptyset$ is a zero density subset of $\NNN_0$.
\end{theorem}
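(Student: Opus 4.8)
The plan is to build everything on the correspondence between inner lines and inner line-patterns and then to push the self-similarity of Proposition~\ref{P:desubstitution-of-line-patterns} through all three claims. First I would unwind the definitions to see that a point $(i,j)$ is a starting point of an inner line of length $\ell$ precisely when $(i,j)\in\iii(P)$ for a necessarily unique line-pattern $P=\linepattern{a}{w}{b}$ with $\abs{w}=\ell$: the impossibility of prolonging the repetition on either side is exactly the condition $\word{x}{i-1}{i+\ell+1}=awb$ and $\word{x}{j-1}{j+\ell+1}=\bar a w\bar b$ with $a\neq\bar a$, $b\neq\bar b$. Since the word occupying a fixed block of positions is determined, two distinct line-patterns have disjoint index sets, and there are only finitely many words of length $\ell$; labelling the line-patterns of length $\ell$ by $P^1,\dots,P^r$ yields the disjoint decomposition $\kkk_\ell=\bigsqcup_{j=1}^r\iii(P^j)$ of claim~\eqref{Case2-in-THM:density}, unique up to the order of the labels.

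To get the density of a single $\iii(P)$ I would write $\iii(P)=A\times B$ with $A=\{i:\word{x}{i-1}{i+\ell+1}=awb\}$ and $B=\{j:\word{x}{j-1}{j+\ell+1}=\bar a w\bar b\}$, the sets $A,B$ being disjoint because $awb\neq\bar a w\bar b$, so no diagonal point $(i,i)$ intrudes. Counting over a square window factorises, $\abs{(A\times B)\cap[1,N]^2}=\abs{A\cap[1,N]}\cdot\abs{B\cap[1,N]}$, so $\dens(\iii(P))=\dens(A)\,\dens(B)$ as soon as the two one-dimensional densities exist. Unique ergodicity of the $\zeta$-shift (from primitivity) makes word frequencies converge uniformly to cylinder measures, giving $\dens(A)=\mu([awb])$ and $\dens(B)=\mu([\bar a w\bar b])$, and minimality forces both to be strictly positive since $awb$ and $\bar a w\bar b$ genuinely occur in $x$. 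Summing over $j$ completes~\eqref{Case2-in-THM:density} and shows $\dens(\kkk_\ell)>0$ whenever $r\ge1$. Claim~\eqref{Case1-in-THM:density} is then immediate: if $\kkk_\ell=\emptyset$ its density is $0$, while if $\kkk_\ell\neq\emptyset$ then $r\ge1$ and the displayed sum is positive.

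For claim~\eqref{Case3-in-THM:density} I would iterate Proposition~\ref{P:desubstitution-of-line-patterns}. When $\ell\ge\recog$ every length-$\ell$ line-pattern is $\recog$-recognizable, hence desubstitutes to a unique predecessor of length $(\ell-\alpha-\beta)/q$ with $\iii(P)=q\,\iii(Q)-\beta$; conversely, induction (Proposition~\ref{P:substitution-of-line-patterns}) sends each such $Q$ to a unique $P$, so desubstitution is a bijection between the line-patterns of the two lengths. I would keep desubstituting until the length first drops below $\recog$; if this takes $k$ steps the final length $\ell_0$ satisfies $\ell_0<\recog\le q\ell_0+\alpha+\beta$, and unwinding the recursion $\ell^{(m)}=q\,\ell^{(m+1)}+\alpha+\beta$ gives $\ell=q^k\ell_0+(\alpha+\beta)\frac{q^k-1}{q-1}=q^k\ell_0+c(q^k-1)$. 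Composing the index-set identity $k$ times turns each planar scaling by $q$ (with the shift by $-\beta$) into a factor $q^{-2}$ on the density, so $\dens(\iii(P))=q^{-2k}\dens(\iii(Q))$ for corresponding patterns; summing over the bijection gives $\dens(\kkk_\ell)=q^{-2k}\dens(\kkk_{\ell_0})$. The second equality is the algebraic identity $\ell+c=q^k(\ell_0+c)$, obtained by adding $c$ to the length formula, whence $q^{-2k}=(\ell_0+c)^2/(\ell+c)^2$. Uniqueness of $(\ell_0,k)$ follows from the determinism of the desubstitution chain, or arithmetically from the observation that $\ell_0+c=(\ell+c)q^{-k}$ must land in the half-open interval $[(\recog+c)/q,\ \recog+c)$, which the geometric sequence meets exactly once.

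Finally, for the zero-density consequence: by~\eqref{Case3-in-THM:density} every admissible $\ell\ge\recog$ has the form $q^k\ell_0+c(q^k-1)$ with $\ell_0$ ranging over the finitely many admissible lengths below $\recog$ and $k\ge1$. For each fixed $\ell_0$ these values grow geometrically with ratio $q$, so at most $O(\log N)$ of them lie in $[1,N]$; over the finitely many choices of $\ell_0$, together with the finitely many admissible lengths below $\recog$, the admissible set has $O(\log N)$ elements up to $N$ and hence density $0$ in $\NNN_0$. The main obstacle I anticipate is not the arithmetic but the two structural inputs: securing the \emph{existence} (not merely the value) of the planar densities, for which the uniform word-frequency statement coming from unique ergodicity is essential, and verifying carefully that induction and desubstitution are mutually inverse, so that summing the density over the bijection in~\eqref{Case3-in-THM:density} is legitimate and no line-pattern is double-counted or lost.
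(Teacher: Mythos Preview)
Your proposal is correct and follows essentially the same route as the paper: the paper packages the line/line-pattern correspondence, the disjoint decomposition $\kkk_\ell=\bigsqcup_j\iii(P^j)$, the product formula $\dens(\iii(P))=\mu([awb])\mu([\bar a w\bar b])$ via unique ergodicity, and the iterated desubstitution into a chain of lemmas (Proposition~\ref{PROP:lines-and-patterns}, Lemma~\ref{LMM:I(P)-disjoint}, Lemma~\ref{LMM:density-of-line-patterns-is-positive}, Corollary~\ref{C:repeat-inducing}, Lemma~\ref{LMM:diff-line-patterns-induces-different-line-patterns}, Corollary~\ref{COR:lenghts}) and then assembles them exactly as you do. Your explicit bijection argument between line-patterns at lengths $\ell$ and $\ell_0$ and your $O(\log N)$ count for the zero-density consequence are spelled out more fully than in the paper, but the underlying ideas are identical.
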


To summarize, for a primitive aperiodic substitution in which $\zeta(0)$ starts with $0$, we only have a 
``small amount'' of possible lengths $\ell$ of lines in the recurrence plot; however, once $\kkk_\ell$ is nonempty, 
it always has a positive density. (The density can be explicitly calculated, as is shown in Section~\ref{S:densities}.)
In contrast to primitive substitutions, in Section~\ref{S:non-primitive} we demonstrate that, for non-primitive substitutions, the set $\kkk_\ell$ is always nonempty (even infinite), but has zero density for every $\ell$.

\begin{theorem}\label{THM:nonprimit}
Let $\zeta$ be a non-primitive binary substitution of constant length $q\ge 2$ 
such that $\zeta(0)$ starts with $0$
and such that the fixed point $x = \zeta^\infty(0)$ of
$\zeta$ is not eventually $\sigma$-periodic.
Let $\kkk_\ell$ ($\ell\in\NNN$) be the set of starting points of inner-lines of length $\ell$ 
in infinite symbolic recurrence plot $\recplot(x,\infty,\eps_0)$. 
Then, for every $\ell\in\NNN$, the set $\kkk_\ell$ is infinite (hence nonempty) and has zero density.
\end{theorem}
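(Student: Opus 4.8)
The plan is to reduce $\zeta$ to a very restricted form, to realise every length by an explicit inner line-pattern built from a block of the letter $1$, and then to bound the relevant densities by counting occurrences of the letter $0$. I expect the reduction, together with the structural facts about $x$ that it yields, to be the main obstacle; after that the remaining steps are routine, and --- in contrast with Theorem~\ref{THM:density} --- recognizability and the desubstitution of Proposition~\ref{P:desubstitution-of-line-patterns} play no role here.

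First I would pin down the shape of $\zeta$. Since $\zeta$ has constant length $q$, each column of its incidence matrix sums to $q$; a $2\times 2$ nonnegative matrix is primitive if and only if it is irreducible with at least one nonzero diagonal entry, and an irreducible matrix with zero diagonal would force $\zeta(0)=1^q$ (both off-diagonal entries equal $q$), against the hypothesis that $\zeta(0)$ starts with $0$. Reducibility leaves exactly two options: $\zeta(1)=1^q$, or $\zeta(0)$ contains no $1$, i.e. $\zeta(0)=0^q$; the latter gives $x=0^\infty$, which is $\sigma$-periodic and hence excluded. So $\zeta(1)=1^q$ and $\zeta(0)=0u$ with $u\in\AAa^{q-1}$, and the exceptional shapes $u=0^{q-1}$ (again $x=0^\infty$) and $u=1^{q-1}$ (where induction gives $\zeta^n(0)=01^{q^n-1}$, so that $x=01^\infty$ is eventually $\sigma$-periodic) are excluded. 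Thus $u$ contains at least one $0$ and at least one $1$; writing $m_0$ for the number of $0$'s in $\zeta(0)$, we have $2\le m_0\le q-1$. From this I extract two structural facts. Since $\zeta$ maps factors of $x$ to factors of $x=\zeta(x)$ and $1$ is a factor of $x$, the word $1^{q^n}=\zeta^n(1)$ is a factor of $x$ for every $n$. And since each $0$ produces $m_0$ new $0$'s under $\zeta$ while each $1$ produces none, $\zeta^n(0)$ contains exactly $m_0^n$ zeros; as $m_0\ge 2$, $x$ has infinitely many $0$'s, so every maximal block of $1$'s in $x$ is finite and (since $x$ begins with $0$) is flanked by $0$'s on both sides. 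Combining these, for every $\ell$ there are infinitely many maximal $1$-blocks of length $\ge\ell+1$ --- otherwise the lengths of maximal $1$-blocks would be bounded, contradicting that $1^{q^n}$ is a factor of $x$ for all $n$ --- and hence each of the words $01^{\ell+1}$ and $1^{\ell+1}0$ occurs at infinitely many positions of $x$.

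To show that $\kkk_\ell$ is infinite, fix $\ell$ and take the line-pattern with $a=0$, $w=1^\ell$, $b=1$, for which $awb=01^{\ell+1}$ and $\bar{a}w\bar{b}=1^{\ell+1}0$; both occur by the previous paragraph, so this is an inner line-pattern. Whenever $01^{\ell+1}$ occurs in $x$ starting at a position $i-1$ and $1^{\ell+1}0$ occurs starting at a position $j-1$, a direct check gives $x_{[i,i+\ell)}=x_{[j,j+\ell)}=1^\ell$, $x_{i-1}=0\ne 1=x_{j-1}$, and $x_{i+\ell}=1\ne 0=x_{j+\ell}$; hence $(i,j)$, with $i,j\ge 1$ and $i\ne j$, is the starting point of an inner line of length exactly $\ell$ (it does not touch the boundary of the recurrence plot). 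Thus $\iii(P)\subseteq\kkk_\ell$, and since $\iii(P)$ is a product of two infinite sets of positive integers, $\kkk_\ell$ is infinite.

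It remains to show $\dens(\kkk_\ell)=0$. There are only finitely many line-patterns of length $\ell$, and every inner line of length $\ell$ starting at $(i,j)$ contributes the pair $(i,j)$ to $\iii(P)$ for the associated $P$; hence $\kkk_\ell$ lies in a finite union of sets $\iii(P)$, and it suffices to prove $\dens(\iii(P))=0$ for each. Given a line-pattern $P$ with components $a,w,b$, one of $a,\bar a$ equals $0$ (they are distinct letters of $\{0,1\}$), so at least one of the words $awb$, $\bar{a}w\bar{b}$ contains a $0$; call it $v$. Each occurrence of $v$ in a prefix of $x$ is pinned down by the position of the first $0$ it contains (which sits at a fixed relative position inside the fixed word $v$), so the number of occurrences of $v$ starting in $[0,N)$ is $O(m_0^n)=O(N^{\log_q m_0})$ whenever $q^{n-1}\le N<q^n$, using that $\zeta^n(0)$ has $m_0^n$ zeros and $\log_q m_0<1$. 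Since $\iii(P)$ is the product of the occurrence set of $awb$ (shifted by one) with that of $\bar{a}w\bar{b}$, one of the two factors is $O(N^{\log_q m_0})$ while the other is trivially $\le N$, so $\#(\iii(P)\cap[1,N]^2)=O(N^{1+\log_q m_0})=o(N^2)$ and $\dens(\iii(P))=0$; summing over the finitely many $P$ gives $\dens(\kkk_\ell)=0$ (in particular the density exists and equals zero).
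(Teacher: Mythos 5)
Your proof is correct and follows essentially the same route as the paper: the same inner line-pattern $\linepattern{0}{1^\ell}{1}$ (with $awb=01^{\ell+1}$ and $\bar a w\bar b=1^{\ell+1}0$) witnesses infiniteness of $\kkk_\ell$, and zero density is obtained by observing that one of $awb$, $\bar a w\bar b$ contains a $0$ and that the positions of $0$'s in $x$ have zero density because $\zeta^n(0)$ contains only $m_0^n$ zeros with $m_0<q$. The only (harmless) differences are that you derive the normal form $\zeta(1)=1^q$ directly from the incidence matrix where the paper invokes S\'e\'ebold's characterization, and your count of zeros is slightly more quantitative ($O(N^{\log_q m_0})$) than the paper's base-$q$ digit estimate.
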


\medskip

The paper is organized as follows.
In the following section, we recall some standard facts about symbolic sequences, substitutions, and (symbolic) recurrence plots. We also discuss the recognizability of substitutions.
Then, in Sections~\ref{S:lengths-and-density}--\ref{S:densities}, we focus on primitive aperiodic substitutions. 
In Section~\ref{S:lengths-and-density}, we introduce the concept of (induced) line-patterns. Additionally, we explore the density of the set of starting points of inner lines in infinite recurrence plots. We deal also with the so-called 
 $0$-boundary line-patterns, see Subsection~\ref{SUBS:0-boundary}.
In Section~\ref{S:lengths-and-density-finite}, we compare finite and infinite symbolic recurrence plots
and show that the number of different lengths of lines in finite recurrence plots as well as the number of recurrences in the so-called $n$-boundary lines is fairly small.
An algorithm for computing the densities $\dens(\kkk_\ell)$ for these substitutions is presented in Section~\ref{S:densities}, with the details postponed to Appendix~\ref{APP:algorithm}. 
In Sections~\ref{S:non-primitive} and \ref{S:other-subs}, we discuss substitutions that do not satisfy primitivity or aperiodicity.
Finally, in Appendix~\ref{APP:1cutting} we describe the connection between recognizability of a word and uniqueness of a $1$-cutting of it.

\section{Preliminaries}\label{S:prelim}

The set of non-negative (positive) integers is denoted by $\NNN_0$ ($\NNN$).
The cardinality of a set $B$ is denoted by $\card B$.
When no confusion can arise, the set of consecutive
integers $\{ a, a+1, \dots, b-1 \}$ for any $a < b$ from $\NNN_0$ is denoted by $[a, b)$;
if $a \geq b$ then $[a, b) = \emptyset$. 
For $B \subseteq \RRR^{k}$ and $c, d \in \RRR$ put $cB+d = \{ cb+d\colon b \in B \}$.

Let $k \in \NNN$ and $M \subseteq \NNN_0^k$.
Then the \emph{upper} and \emph{lower (asymptotic) densities} of $M$ are given by
$$ 
	\bar \dens (M) = \limsup\limits_{n \to \infty} \frac{1}{n^k} \card \big(M \cap [0, n)^k\big) 
	\quad\text{and}\quad 
	\ubar{\dens}(M) = \liminf\limits_{n \to \infty} \frac{1}{n^k} \card \big(M \cap [0, n)^k \big).
$$
If $\ubar{\dens}(M) = \bar \dens (M)$ we say that the (asymptotic) density $\dens(M)$ of $M$
exists and is equal to this common value.
Clearly, if densities of disjoint sets $M,N \subseteq \NNN_0^k$ exist then
\begin{equation}\label{EQ:density-props}
	\dens(M\sqcup N)=\dens(M)+\dens(N)
	\quad\text{and}\quad
	\dens(aM+b)=a^{-k} \dens(M)
\end{equation}
for every integers $a>0$ and $b$.
Further, for any $M \subseteq \mathbb{N}_0^k$ and $N \subseteq \mathbb{N}_0^l$, $\dens(M \times N) = \dens(M)\dens(N)$ provided $\dens(M)$ and $\dens(N)$ exist.

\subsection{The metric space of symbolic sequences}
An \emph{alphabet} $\AAa$ is a nonempty finite set; elements of $\AAa$ are called \emph{letters}.
Put 
\begin{equation*}
	\Sigma=\AAa^{\NNN_0}.
\end{equation*}
Members $x = x_0 x_1 x_2 \dots $ of $\Sigma$ are called \emph{sequences}.
The \emph{subword of x of length $\ell$ starting at index $i$}\footnote{We use the terms subword and index instead of factor and rank, respectively.} is an $\ell$-word $x_i x_{i+1} \dots x_{i+\ell-1}$ and will be denoted by $\word{x}{i}{i+\ell}$.

Metric $\rho$ on $\Sigma$ is defined for every $x, y \in \Sigma$ by $\rho(x, y) = 0$
if $x=y$ and 
\begin{equation}\label{EQ:rho-def}
	\rho(x, y) = 2^{-h}
	\qquad\text{if } x \neq y, 
	\quad\text{where } 
	h = \min \{ i \geq 0 \colon x_i \neq y_i \}.
\end{equation}
The pair $(\Sigma,\rho)$ is a compact metric space.
For every word $w\in\AAa^*$, the \emph{cylinder} $[w]$ is the clopen (that is, closed and open) set
$\{x\in\Sigma\colon x_{[0,\abs{w})}=w\}$.

A \emph{shift} is the map
$\sigma\colon \Sigma \to \Sigma $ defined by $\sigma(x_0 x_1 x_2 \ldots) = x_1 x_2 \ldots$.
For each nonempty closed $\sigma$-invariant subset $Y \subseteq \Sigma$,
the restriction of $(\Sigma, \sigma)$ to $Y$ is called a \emph{subshift}.
The closure of the orbit 
$\{ \sigma^n(x) \colon n \geq 0 \}$ of any $x \in \Sigma$ defines a subshift,
as it is always a nonempty, closed and $\sigma$-invariant set.

\subsection{Recurrence plot} \label{SUBS:RecPlot}
A recurrence plot \cite{eckmann1987recurrence} visualizes trajectory of a dynamical system.
For a sequence $x\in\Sigma$, $n \in \NNN \cup \{ \infty \}$, $n \geq 2$ and $\eps >0$, the \emph{recurrence plot} $\recplot(x,n, \eps)$ is a square $n\times n$ matrix
such that, for $0 \leq i, j < n$,
$$ \recplot(x, n, \eps)_{i,j} = 
\begin{cases}
	1 &\text{  if } \rho(\sigma^i(x), \sigma^j(x)) \leq \eps, \\
	0 &\text{  otherwise}.
\end{cases}
$$
Every pair $(i,j)$ with $\recplot(x, n, \eps)_{i,j}=1$ is called a \emph{recurrence}.

Let $\recplot(x, n, \eps)$ be a recurrence plot and $\ell\ge 1$ be an integer.
A \emph{(diagonal) line of length $\ell$} (or, shortly, an \emph{$\ell$-line}) in $\recplot(x, n,\eps)$ is a triple $(i,j,\ell)$ of integers, where
\begin{itemize}
	\item $0 \leq i,j \leq n-\ell$ and $i\ne j$;
	\item $\recplot(x, n, \eps)_{i+h, j+h}  = 1$ for every $0 \leq h < \ell$;
	\item if $\min\{i,j\} > 0$, then $\recplot(x, n, \eps)_{i-1, j-1}  = 0$;
	\item if $\max\{i,j\} < n-\ell$, then $\recplot(x, n, \eps)_{i+\ell, j+\ell}  = 0$.
\end{itemize}
The pair $(i,j)$ is called the \emph{starting point}
and $\ell$ is called the \emph{length} of the line $(i,j,\ell)$.
If $\min\{i, j\} = 0$ we say that the line is \emph{$0$-boundary}.
Similarly,  if $\max\{i,j\} = n -\ell$, 
we say that the line is \emph{$n$-boundary}
(notice that if $n = \infty$ then no line is $n$-boundary; further, for $n$ finite,
a line can be both $0$-boundary and $n$-boundary).
Lines in $\recplot(x, n, \eps)$, which are neither $0$-boundary nor $n$-boundary
are called \emph{inner lines}. 

In $\recplot(x, \infty, \eps)$ we analogously define also \emph{(diagonal) lines of length $\ell=\infty$}.
However, such lines occur only for eventually $\sigma$-periodic $x$, as is shown in the following proposition. 
Recall that 
$x$ is \emph{$\sigma$-periodic} if there is $p\in\NNN$ such that $x=(x_{[0,p)})^\infty$; the 
smallest such $p$ is called the \emph{period} of $x$.
Further, $x$ is \emph{eventually $\sigma$-periodic} if there is $h\in\NNN$ such that $\sigma^h(x)$ is $\sigma$-periodic.

\begin{proposition}\label{PROP:infinite-lines}
Let $x\in\Sigma$ and $\eps\in(0,1)$. Then the following are true:
\begin{enumerate}
	\item\label{IT:infinite-lines:periodic}
	if $x$ is $\sigma$-periodic then $\recplot(x, \infty, \eps)$ has no line of finite length and has infinitely many lines of infinite length;
	\item\label{IT:infinite-lines:evper}
	if $x$ is eventually $\sigma$-periodic then $\recplot(x, \infty, \eps)$ has infinitely many lines of infinite length, and it has either zero, or at least one but finitely many, or infinitely many lines of finite length;
	\item\label{IT:infinite-lines:nonevper}
	if $x$ is not eventually $\sigma$-periodic then $\recplot(x, \infty, \eps)$ has no line of infinite length
	and has inner lines of arbitrarily large finite length.
\end{enumerate}
\end{proposition}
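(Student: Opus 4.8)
The plan is to treat the three cases separately, using the characterization of lines as maximal diagonal runs of equal symbols. For a recurrence plot with $\eps\in(0,1)$, by \eqref{EQ:rho-def} the entry $\recplot(x,\infty,\eps)_{i,j}=1$ is equivalent to $x_i=x_j$, so a diagonal line of length $\ell$ starting at $(i,j)$ with $i\ne j$ corresponds exactly to a maximal common subword: $x_{[i,i+\ell)}=x_{[j,j+\ell)}$, together with a left and a right mismatch (when the indices in question are defined). Likewise a line of infinite length starting at $(i,j)$ corresponds to $\sigma^i(x)=\sigma^j(x)$ with a left mismatch, i.e.\ $x$ agrees with a shift of itself from index $i$ and $j$ onward. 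First I would record this translation so that the whole proof can be phrased in terms of subwords of $x$.

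For part~\eqref{IT:infinite-lines:periodic}: if $x=(x_{[0,p)})^\infty$ with period $p$, then for every multiple $m$ of $p$ one has $\sigma^i(x)=\sigma^{i+m}(x)$ for all $i$, giving infinitely many infinite lines (e.g.\ all starting points $(0,mp)$, $m\ge 1$). Conversely there can be no finite line: a finite line at $(i,j)$ would need $x_{i+\ell}\ne x_{j+\ell}$, but if $\sigma^i(x)$ and $\sigma^j(x)$ ever agree on a long enough block they must agree forever — more precisely one shows that whenever $x_{[i,i+\ell)}=x_{[j,j+\ell)}$ with $\ell\ge p$ then $i\equiv j\pmod p$ (using that $x$ has period $p$ and a block of length $p$ determines the residue), hence $\sigma^i(x)=\sigma^j(x)$ and the run cannot terminate on the right; and for $\ell<p$ the run is not maximal because it can always be extended (again by periodicity, a mismatch at distance $<p$ is impossible once the aligned blocks already have length matching a full period... here one argues directly that two indices giving a common block of length $\ge 1$ but with a right mismatch would force $p$ not to be the period). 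For part~\eqref{IT:infinite-lines:evper}: pick $h$ with $\sigma^h(x)$ periodic of period $p$; the tail contributes infinitely many infinite lines exactly as above (shifts by multiples of $p$ beyond index $h$). For finite lines, the three alternatives are realized by examples — $x$ itself purely periodic after index $0$ has none, a generic eventually periodic word has infinitely many (mismatches created by the non-periodic prefix interacting with the periodic tail at infinitely many offsets), and one constructs an intermediate example with exactly finitely many; the cleanest route is to observe that finite lines can only start at pairs $(i,j)$ where at least one of $i,j$ lies in the pre-periodic part or where the two offsets into the periodic tail are non-congruent mod $p$, and then exhibit the relevant small examples.

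For part~\eqref{IT:infinite-lines:nonevper}, which I expect to be the main obstacle: first, there is no infinite line, since an infinite line at $(i,j)$ means $\sigma^i(x)=\sigma^j(x)$ with, say, $i<j$; writing $p=j-i$ this gives $\sigma^i(x)=\sigma^{i+p}(x)$, i.e.\ $\sigma^i(x)$ is $\sigma$-periodic with period (dividing) $p$, so $x$ is eventually $\sigma$-periodic, contradiction. The substantive claim is the existence of inner lines of arbitrarily large finite length. The idea is: since $x$ is not eventually periodic, by a standard Morse–Hedlund type argument $x$ contains, for every $N$, two occurrences of some word $u$ of length $N$ that are followed by different letters (otherwise the tail would be eventually periodic); pushing this further, for each $N$ one can find a word $u$ of length $\ge N$ occurring at two positions $i<j$ such that $x_{i+|u|}\ne x_{j+|u|}$, and then \emph{shrink from the left} to a maximal common block: let $u$ be the longest word such that $x_{[i,i+|u|)}=x_{[j,j+|u|)}$ for these particular occurrences extended maximally to the left as well — concretely take the block $x_{[i',i'+L)}=x_{[j',j'+L)}$ maximal on both sides around the chosen matching positions. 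One must check this maximal block has length $\ge N$ (guaranteed since we started from a common block of length $\ge N$), has a genuine right mismatch (by construction), and has a genuine left mismatch or else $\min\{i',j'\}=0$; to force it to be an \emph{inner} line (not $0$-boundary) one uses recurrence — every word of $x$ occurs infinitely often (true e.g.\ when $x$ is a fixed point of a substitution, or more generally can be arranged), so one may translate the pair of occurrences so that both start at indices $>0$ while preserving the left mismatch, and since $n=\infty$ there is no $n$-boundary obstruction. Assembling these, for every $N$ we obtain an inner line of length $\ge N$, which is what is claimed. The delicate point is the simultaneous control of maximality on the left (to get a left mismatch or relocate away from the boundary) and on the right (to get a right mismatch) while keeping the length large; I would handle this by first fixing a long common block with a right mismatch, then extending it leftward to its maximal length and separately arguing the left end can be taken interior using the infinite recurrence of subwords.
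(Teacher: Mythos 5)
Your overall strategy is close to the paper's in parts (1) and (3), but there are several genuine problems. First, your opening translation is wrong for general $\eps$: $\recplot(x,\infty,\eps)_{i,j}=1$ is equivalent to $x_i=x_j$ only when $\eps\ge 1/2$. For $\eps\in(0,1/2)$ it means $x_{[i,i+g)}=x_{[j,j+g)}$, where $g$ is the least integer with $2^{-g}\le\eps$, so an $\ell$-line corresponds to a maximal common block of length $\ell+g-1$ and the mismatch conditions concern $g$-blocks, not single letters; the paper carries this $g$ explicitly through the proof of (3), and your bookkeeping in all three parts needs the same correction. Second, in part (1) the step you flag with ``here one argues directly that a common block with a right mismatch would force $p$ not to be the period'' cannot be completed: for $x=(0011)^\infty$ and $\eps=1/2$ the triple $(2,3,1)$ satisfies every clause of the definition of a line (since $x_2=x_3=1$ while $x_1\ne x_2$ and $x_3\ne x_4$), so a maximal finite run with mismatches on both sides coexists with least period $4$. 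Your Fine--Wilf argument for common blocks of length at least $p$ is sound; it is precisely the short-block case that breaks down (the paper's own proof of (1) disposes of this with a bare ``clearly'', so the difficulty is not yours alone, but your argument cannot be finished as written).

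For part (3), the ``no infinite line'' half coincides with the paper's argument and is fine. For the substantive half you take a genuinely different route --- Morse--Hedlund right-special words plus maximal leftward extension --- whereas the paper argues by contradiction: assume every line has length $<\ell$, use the pigeonhole principle to find a word of length $\ell+g-1$ occurring infinitely often, and extend the resulting diagonal run to a line of length at least $\ell$. Your route contains a step that does not go through: to force the line to be inner you invoke ``every word of $x$ occurs infinitely often'', which is false for a general $x\in\Sigma$ that is not eventually $\sigma$-periodic (it holds for uniformly recurrent sequences, but here $x$ is arbitrary and the property cannot be ``arranged''). Without it, your maximal leftward extension may terminate at $\min\{i',j'\}=0$ and you only obtain $0$-boundary lines of large length. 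The paper's pigeonhole step is exactly what supplies the recurrence you are missing: among the finitely many words of a given length some word occurs infinitely often, and two of its occurrences can then be chosen at positive indices. Finally, in part (2) the three alternatives must actually be exhibited; the paper does this with $01^\infty$, $001^\infty$ and $0(01)^\infty$, whereas ``one constructs an intermediate example'' is a placeholder rather than a proof.
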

\begin{proof}
\eqref{IT:infinite-lines:periodic} Assume that $x$ is $\sigma$-periodic with period $p$. Clearly, all the lines in $\recplot(x, \infty, \eps)$ are $(0,kp,\infty)$ and $(kp, 0, \infty)$, where $k\in\NNN$.

\eqref{IT:infinite-lines:evper} Assume now that $x$ is eventually $\sigma$-periodic but not $\sigma$-periodic;
let $h\in\NNN$ be the smallest integer such that $y=\sigma^h(x)$ is periodic and let $p$ be the period 
of $y$. Then all the infinite lines in $\recplot(x, \infty, \eps)$ are $(h,kp+h,\infty)$ and $(kp+h, h, \infty)$, where $k\in\NNN$. Sequences $01^\infty$, $001^\infty$ and $0(01)^\infty$ show that $\recplot(x, \infty, \eps=1/2)$ may have zero, finitely many ($(0,1,1)$ and $(1,0,1)$) or infinitely many ($(0,2k+1,1)$ and $(2k+1,0,1)$ for every $k\in\NNN_0$) lines of finite length.

\eqref{IT:infinite-lines:nonevper} Assume that $(i,j,\infty)$ is a line in $\recplot(x, \infty, \eps)$.
We may assume that $i<j$; put $p=j-i$. Since $\eps<1$, $x_{i+h}=x_{j+h}$ for every $h\in\NNN_0$. Thus we have that
$x=x_{[0,i)}(x_{[i,j)})^\infty$ and so $x$ is eventually $\sigma$-periodic.

Let $x$ be not eventually $\sigma$-periodic. We are going to show that the recurrence plot contains inner lines of arbitrarily large finite length. Suppose, on the contrary, that every line in $\recplot(x, \infty, \eps)$ 
has length smaller than $\ell$. Let $g\in\NNN$ be such that $\eps=2^{-g}$. By the pigeonhole principle, there is a word $w$ of length $\ell+g-1$ which occurs infinitely many times in $x$. Hence there are $0<i<j$ such that both 
$\sigma^i(x)$ and $\sigma^j(x)$ starts with $w$. Then 
$\recplot(x, n, \eps)_{i+h, j+h}  = 1$ for every $h\in[0,\ell)$. This clearly implies that there are $\ell'\ge\ell$
and $0\le k\le i$ such that $(i-k,j-k,\ell')$ is a line in $\recplot(x, \infty, \eps)$, a contradiction
with the choice of $\ell$.
\end{proof}

We include here also the following result showing that, for binary alphabet, symbolic recurrence plots
often contain infinitely many lines of length $1$.

\begin{proposition}\label{PROP:1-lines}
Let $x\in\Sigma$ be not eventually $\sigma$-periodic and let $\eps_0=1/2$.
\begin{enumerate}
	\item\label{IT:infinite-lines:binary}
	If $\card\AAa=2$ then $\recplot(x, \infty, \eps_0)$ has infinitely many lines of length $1$.
	
	\item\label{IT:infinite-lines:other}
	If $\card\AAa\ge 3$ then $\recplot(x, \infty, \eps_0)$ has either zero, or at least one but finitely many, or infinitely many lines of length $1$.
\end{enumerate}

\end{proposition}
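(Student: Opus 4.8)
The plan is to handle the two parts of the proposition by rather different means: part (1) by a direct structural argument on the maximal constant blocks of $x$, and part (2) by producing one concrete example for each of the three alternatives. At the outset I would record the basic translation: since $\eps_0=1/2=2^{-1}$, one has $\recplot(x,\infty,\eps_0)_{i,j}=1$ iff $x_i=x_j$, so a length-$1$ line with starting point $(i,j)$ (with $i\neq j$) is precisely a pair with $x_i=x_j$, with $x_{i+1}\neq x_{j+1}$, and with $x_{i-1}\neq x_{j-1}$ whenever $\min\{i,j\}\ge 1$.

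For part (1), I would first observe that a binary $x$ which is not eventually $\sigma$-periodic is in particular not eventually constant, hence has infinitely many (finite) maximal constant blocks; and if all but finitely many of these had length $1$, then past some index $x$ would simply alternate between $0$ and $1$, making $x$ eventually $\sigma$-periodic -- a contradiction. So $x$ has infinitely many maximal constant blocks of length at least $2$. Fixing one such block, say at positions $p,p+1,\dots,p+m-1$ with common value $c$ and $m\ge 2$ -- so that $x_{p+m}=\bar c$, and also $x_{p-1}=\bar c$ when $p\ge 1$, since the block is maximal and the alphabet is binary -- I would check that $(p,\,p+m-1)$ starts a length-$1$ line: $p\neq p+m-1$; $x_p=x_{p+m-1}=c$; $x_{p+1}=c$ while $x_{p+m}=\bar c$; and, if $p\ge 1$, $x_{p-1}=\bar c$ while $x_{p+m-2}=c$. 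Distinct blocks give distinct starting points, so there are infinitely many length-$1$ lines.

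For part (2) it suffices to exhibit, for each case, one sequence over an alphabet of size $\ge 3$ which is not eventually $\sigma$-periodic. For \emph{infinitely many}, I would take any not-eventually-$\sigma$-periodic sequence over $\{0,1\}$, regard it as a sequence over $\AAa=\{0,1,2\}$ (which changes neither the sequence nor its recurrence plot), and invoke part (1). For \emph{zero}, over $\AAa=\{0,1,2\}$ I would take $x$ to be the concatenation $B_1B_2B_3\cdots$ of blocks $B_i\in\{01,\,012\}$ chosen so that the sequence of block types is not eventually periodic (e.g.\ coded by the Fibonacci word); since the sequence of block types is recovered from $x$ by a local rule (each letter $0$ starts a block, which is $012$ or $01$ according to its $2$nd-next letter), eventual $\sigma$-periodicity of $x$ would force it on the block types, so $x$ is not eventually $\sigma$-periodic. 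In $x$, every $0$ is immediately followed by $1$, every $1$ is immediately preceded by $0$, and every $2$ has immediate neighbours $1$ and $0$; hence, for each letter $c$, the set of pairs $(x_{i-1},x_{i+1})$ over positions $i\ge 1$ with $x_i=c$ lies in a single ``row'' (first coordinate fixed) or a single ``column'' (second coordinate fixed), so no two of them differ in both coordinates and there is no inner length-$1$ line; and since $x_1=1$ is the letter following every occurrence of $x_0=0$, there is no $0$-boundary length-$1$ line either. For \emph{at least one but finitely many}, I would let $y$ be the word just built but over the disjoint alphabet $\{1,2,3\}$ and set $x=0\,0\,y$ over $\AAa=\{0,1,2,3\}$; then $x$ is not eventually $\sigma$-periodic (since $y$ is not), the initial block $00$ makes $(0,1)$ and $(1,0)$ start length-$1$ lines, and these are the only ones -- a $0$-boundary length-$1$ line would need a second position carrying the letter $0$, but $0$ occurs in $x$ only at positions $0$ and $1$, while any length-$1$ line whose center lies in $\{1,2,3\}$ restricts to a length-$1$ line of $y$ (inner, or $0$-boundary), of which there are none.

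The case-checking in part (1) and the ``row-or-column'' dichotomy together with the boundary analysis in part (2) are routine. The one genuinely delicate point is securing \emph{finitely many but nonzero} length-$1$ lines in part (2): a carelessly introduced mismatch recurs throughout the sequence and produces infinitely many lines, so the construction must force the letter $x_0$ to occur only finitely often and keep the initial mismatch isolated -- which is exactly the role of the prefix $00$ on an otherwise unused letter.
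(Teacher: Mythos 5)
Your proof is correct in substance but takes a genuinely different route in both parts. For part (1), the paper argues that some $2$-word $aa$ occurs infinitely often (a binary $x$ not ending in $(01)^\infty$ must contain $00$ or $11$ infinitely often), then finds letters $b\ne a$ and $c\ne a$ with $aab$ and $caa$ each occurring infinitely often, and pairs occurrences of these two $3$-words. Your version --- that each maximal constant run of length $m\ge 2$, of which there are infinitely many, directly yields the $1$-line $(p,p+m-1,1)$ --- is a little cleaner: it needs no second application of the pigeonhole principle and produces both endpoints of the line inside a single run; your reduction to ``infinitely many maximal runs of length $\ge 2$'' is valid for binary sequences that are not eventually $\sigma$-periodic. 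For part (2), both you and the paper read the statement as asserting that all three alternatives actually occur, and your ``zero'' example is essentially the paper's: the paper takes $\zeta(y)$ with $\zeta(0)=012$, $\zeta(1)=12$, which is exactly a concatenation of blocks $012$ and $12$, and your local-rule argument for non-eventual-periodicity of the concatenation is a welcome replacement for the paper's unelaborated ``we may assume $S$ is such that $x$ is not eventually $\sigma$-periodic.''

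The one genuine gap is in your ``at least one but finitely many'' example: $x=00y$ with $y$ over $\{1,2,3\}$ lives on a four-letter alphabet, so it does not witness that alternative when $\card\AAa=3$, which is precisely the borderline case the proposition is contrasting with part (1). (Examples over exactly three letters cover every $\card\AAa\ge 3$; examples needing four do not.) The paper stays inside $\{0,1,2\}$ by prepending $120012$ to its ``zero'' example: this introduces the factors $001$ and $200$ exactly once each, producing exactly two inner $1$-lines and no $0$-boundary ones. Your idea of isolating the initial mismatch on a letter that occurs only finitely often is sound, but to close the case $\card\AAa=3$ you should either adopt the paper's prefix trick or otherwise arrange a once-occurring factor using only the three letters already present.
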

\begin{proof}
\eqref{IT:infinite-lines:binary} 
Assume first that there is $a\in\AAa$ such that the $2$-word $aa$ occurs in $x$ infinitely many times.
Since $\AAa$ is finite and $x$ does not end with $a^\infty$, we have that:
\begin{itemize}
	\item there is a letter $b\ne a$ such that the word $aab$ is contained in $x$ infinitely many times;
	\item there is a letter $c\ne a$  such that the word $caa$ is contained in $x$ infinitely many times.
\end{itemize}
Hence $\recplot(x, \infty, \eps_0)$ has infinitely many inner lines $(i,j,1)$ of length $1$, 
where $i\in\NNN$ is such that $x_{[i-1,i+2)}=aab$ and $j\in\NNN$ is such that $x_{[j-1,j+2)}=caa$.

Now \eqref{IT:infinite-lines:binary} easily follows. In fact, let $\AAa=\{0,1\}$. 
Since $x$ does not end with $(01)^\infty$, at least one
of the $2$-words $00$, $11$ occurs in $x$ infinitely many times. Hence, as shown above, $\recplot(x, \infty, \eps_0)$ has infinitely many inner lines of length $1$.

\eqref{IT:infinite-lines:other} 
We may assume that $A\supseteq\{0,1,2\}$.
By \eqref{IT:infinite-lines:binary}, to prove \eqref{IT:infinite-lines:other} it suffices to find $\{0,1,2\}$-sequences
$x$ and $x'$ which are not eventually $\sigma$-periodic and are such that  $\recplot(x, \infty, \eps_0)$ does not 
contain any $1$-line and $\recplot(x', \infty, \eps_0)$ contains at least one but finitely many $1$-lines.

Fix any infinite subset $S\subseteq \NNN_0$ containing $0$. Let $y\in\Sigma=A^{\NNN_0}$ be defined by 
$y_i=0$ if $i\in S$ and $y_i=1$ if $i\not\in S$. Define $x\in\Sigma$ by applying the following substitution rule $\zeta$
on $y$ (that is, $x=\zeta(y)$; for details on substitutions, see the next subsection):
\begin{itemize}
\item $\zeta(0)=012$;
\item $\zeta(1)=12$.
\end{itemize}
We may assume that $S$ is such that $x$ is not eventually $\sigma$-periodic.
The sequence $x=0 1 2 \dots$ starts with $0$ and the sets of all words of length $2$ and $3$ contained in $x$ are
$\lang_2(x)=\{01, 12, 20, 21\}$ and
$\lang_3(x)=\{012, 120, 121, 212\}$. Clearly, $\recplot(x, \infty, \eps_0)$ contains
neither inner nor $0$-boundary line of length $1$. 

Now define $x'\in\Sigma$ by $x'_{[0,6)}=120012$ and $x'_{6+i}=x_i$ for every $i\in\NNN_0$.
Then $x'$ is not eventually $\sigma$-periodic and the sets of 
of all words of length $2$ and $3$ contained in $x$ are
$\lang_2(x')=\lang_2(x)\sqcup \{00\}$ and
$\lang_3(x')=\lang_3(x)\sqcup\{001,200\}$.
Clearly, $\recplot(x', \infty, \eps_0)$ does not contain any $0$-boundary $1$-line.
Further, $(i,j,1)$ is an inner line in $\recplot(x', \infty, \eps_0)$
if and only if either $x_{[i-1,i+2)}=001$ and $x_{[j-1,j+2)}=200$, or vice versa.
Since both words $001$ and $200$ are contained in $x$ exactly once, 
we have that $\recplot(x', \infty, \eps_0)$ contains exactly two inner lines of length $1$.
\end{proof}

Thus, for binary alphabet and not eventually $\sigma$-periodic sequence $x$, 
we always have lines of length $\ell=1$ in the symbolic recurrence plot.
However, this is no longer true for any $\ell\ge 2$, even if $x$ is a 
fixed point of a uniform substitutions (for the corresponding definitions, see the next subsection). 
In fact, for every $\ell\ge 2$ there is a uniform substitution $\zeta$ over binary alphabet
such that the symbolic recurrence plot of $x=\zeta^\infty(0)$ contains no line of length $\ell$.
For example, if $\zeta$ is given by $\zeta(0)=0101$ and $\zeta(1)=0010$
(or by $\zeta(0)=011$ and $\zeta(1)=010$) then
there is no line of length $2$ (or of length $3$, respectively)
in the recurrence plot.

\subsection{Substitution}\label{SUBS:substitution}
From now on we restrict our attention to the binary alphabet
$\AAa=\{0,1\}$. Put $\bar{0} = 1$ and $\bar{1} = 0$. 
The set of all words over $\AAa$ is
\begin{equation*}
	\AAa^{*}  = \bigcup_{\ell \ge 0} \AAa^\ell.
\end{equation*}
The set $\AAa^*$ endowed with concatenation is a free monoid.
Any member of $\AAa^\ell$ is called a \emph{word of length $\ell$}, or an \emph{$\ell$-word}, and is denoted by $w=w_0 w_1 \dots w_{\ell-1}$; here $w_i$ is the \emph{$i$-th letter} of $w$.
Length of a word $w$ will be denoted by $\abs{w}$.
The unique word of length $0$ is called the \emph{empty word} and is denoted by $\emptyword$.
A \emph{prefix} (\emph{suffix}) of a word $w=w_0 w_1 \dots w_{\ell-1}$ is any word
$w_{[0,k)}$ ($w_{[\ell-k,\ell)}$, respectively), where $0\le k\le \ell$; if $k<\ell$, the prefix (suffix) is
said to be \emph{proper}.

A map $\zeta\colon \AAa\to\AAa^*$ is said to be a \emph{substitution $\zeta$ of constant length}
or \emph{uniform substitution} if there is an integer $q\ge 2$ such that $\abs{\zeta(a)}=q$ for every $a\in\AAa$.
So $\zeta$ maps each letter ($0$ and $1$) to a word of length $q$ ($\zeta(0)$ and $\zeta(1)$, respectively).

The substitution $\zeta$ induces a morphism (denoted also by $\zeta$) of the monoid $\AAa^*$
by putting $\zeta(\emptyword) = \emptyword$ and $\zeta(w) = \zeta(w_0) \zeta(w_1) \dots \zeta(w_{\ell-1})$
for any nonempty $\ell$-word $w = w_0 w_1 \dots w_{\ell-1}$.
Further, $\zeta$ induces a map
(again denoted by $\zeta$) from $\Sigma=\AAa^{\NNN_0}$ to $\Sigma$ by
$\zeta(x)=(\zeta(x_n))_{n\in\NNN_0}$ for $ x=(x_n)_{n\in\NNN_0} \in \Sigma$.
Since no confusion can arise, all these maps will be simply called \emph{substitution} $\zeta$.
The iterates $\zeta^k (k \geq 0)$ of $\zeta$ are defined inductively: $\zeta^0$ is the identity 
and $\zeta^k = \zeta \circ\zeta^{k-1}$ for $k\ge 1$.

The \emph{language} of the substitution $\zeta$ is
\begin{equation*}
	\lang_\zeta = \{w\in\AAa^*\colon w \text{ is a subword of some } \zeta^k(a)\}.
\end{equation*}
Words from $\lang_\zeta$ are called \emph{allowed}.
The substitution $\zeta$ defines the \emph{substitution dynamical system} (or, shortly, \emph{$\zeta$-shift}), 
which is the subshift $(X_\zeta,\sigma)$ with
\begin{equation*}
	X_\zeta = \{
		y\in\Sigma\colon
		y_{[0,n)} \in\lang_\zeta \text{ for every } n\ge 1
	\}.
\end{equation*}
The substitution $\zeta$ is called \emph{aperiodic} if $X_\zeta$ contains a sequence
which is not $\sigma$-periodic \cite[Definition~5.15]{queffelec2010substitution}.
The substitution $\zeta$ is \emph{primitive} if there exists $k \geq 1$ such that, for every $a,b \in \AAa$,
$b$ is in $\zeta^k (a)$; see e.g.~\cite[Definition~5.3]{queffelec2010substitution}.

Let $\zeta$ be a constant length substitution.
From now on we make the following assumptions:
\begin{flalign}
	\label{assumpt-start-with-0}
	&\zeta(0) \text{ starts with letter } 0;
	\\
	\label{assumpt-both-0-and-1-in-zeta-k}
	&\zeta\text{ is primitive};
	\\
	\label{assumpt-aperiodic}
	&\zeta\text{ is aperiodic.}
\end{flalign}
A direct consequence of assumption~\eqref{assumpt-aperiodic} for binary alphabet is that
\begin{equation}\label{assumpt-one-to-one}
	\zeta\text{ is injective};
\end{equation}
that is, $\zeta$ is one-to-one on the alphabet: $\zeta(0)\ne \zeta(1)$.

The first assumption \eqref{assumpt-start-with-0} guarantees the existence of a unique fixed point $x \in\Sigma$ of substitution $\zeta$ starting with $0$ \cite[page~126]{queffelec2010substitution}; obviously
$x = \lim\limits_{k \to \infty} \zeta^k (0) =\zeta^\infty(0)$; if we write $\zeta(0)=0w$ (where $w\in\AAa^*$) then
\begin{equation*}
	x = 0\zeta(w)\zeta^2(w)\dots\zeta^k(w)\dots
\end{equation*}

The second assumption \eqref{assumpt-both-0-and-1-in-zeta-k} of primitivity
together with \eqref{assumpt-start-with-0} imply
that $X_\zeta$ is equal to the $\sigma$-orbit closure of the sequence $x$
\cite[Proposition~5.5]{queffelec2010substitution};
hence, $\lang_\zeta$ is equal to the set of all subwords $x_{[i,i+\ell)}$ ($i,\ell\ge 0$) of $x$.
By \cite{michel1976stricte} (see also \cite[Theorem~5.6]{queffelec2010substitution}),
the conditions \eqref{assumpt-start-with-0} and \eqref{assumpt-both-0-and-1-in-zeta-k} also imply that
$(X_\zeta, \sigma)$ is strictly ergodic, that is, minimal and uniquely ergodic.

Concerning aperiodicity \eqref{assumpt-aperiodic}, recall that if $\zeta$ is primitive, 
then it is aperiodic if and only if $X_\zeta$ is infinite \cite[page~151]{queffelec2010substitution}. 
In \cite{seebold1988periodicity} (see also \cite[Theorem~0.1]{tan2008periodicity}) the author characterized
binary substitutions such that $x=\zeta^\infty(0)$ is eventually $\sigma$-periodic. 
Using this result we can reformulate conditions \eqref{assumpt-start-with-0}--\eqref{assumpt-aperiodic} as follows:
\begin{itemize}
\item $\zeta(0)$ starts with $0$ and contains $1$;
\item $\zeta(1)\ne\zeta(0)$ and contains $0$;
\item if $q=2s+1$ is odd, then $\zeta(0)\ne (01)^s0$ or $\zeta(1)\ne (10)^s1$.
\end{itemize}

\begin{notation}
Till the end of Section~\ref{S:densities}, $\zeta$ will always denote a $q$-uniform substitution over binary alphabet satisfying \eqref{assumpt-start-with-0}--\eqref{assumpt-aperiodic},
and $x$ will denote the unique fixed point $\zeta^{\infty}(0)$
of $\zeta$ starting with $0$. Further, in the notation for recurrence plot we usually skip $x$; so $\recplot(n,\eps)$ will always mean $\recplot(x,n,\eps)$.
\end{notation}

\subsection{Recognizability of substitutions}\label{SUBS:recognizability}

\begin{remark}\label{REMARK:unique-zeta-at-indices-of-type-qi}
	Notice that $x = \zeta(x) = \zeta(x_0) \zeta(x_1) \zeta(x_2) \dots$.
	Since $\zeta$ is one-to-one on the alphabet \eqref{assumpt-one-to-one}, for every $i \in \NNN_0$ we have a unique $a \in \AAa$ such that $\word{x}{iq}{(i+1)q} = \zeta(a)$.
\end{remark}

Recognizability of $\zeta$ will play a crucial role in our analysis.
In the following we recall the notion of $1$-cutting of a word from \cite[p.~210]{fogg2002substitutions}.
First, the set of \emph{cutting bars of order 1} is defined by
\begin{equation}\label{E1}
	E_1 = \{0\} \ \cup\  \{ \abs{\zeta(\word{x}{0}{k})}\colon k\in\NNN \}. 
\end{equation}

\begin{definition}\label{DEF:1-cutting} 
	Let $w\in\lang_\zeta$ be nonempty and $i\in\NNN_0$ be such that $w=x_{[i,i+\abs{w})}$. 
	A \emph{$1$-cutting} at the index $i$ of $w$ is any $(k+2)$-tuple ($k\ge 0$) of words
	$$
		[s, v_0, v_1, \dots, v_{k-1}, t],
	$$
	such that, for some $j\in\NNN_0$,
	\begin{enumerate}
	\item\label{LAB-1cutting-w} 
		$w=sv_0v_1\dots v_{k-1}t$;
	\item\label{LAB-1cutting-v} 
		$v_h=\zeta(x_{j+h})$ for every $h\in[0,k)$;
	\item\label{LAB-1cutting-st} 
		$s$ is a proper suffix of $\zeta(x_{j-1})$ ($s=\emptyword$ for $j=0$)
		and
		$t$ is a proper prefix of $\zeta(x_{j+k})$;
	\item\label{LAB-1cutting-E} 
		$E_1\cap [i,i+\abs{w})	= (i-l) \ + \ E_1\cap [l,l+\abs{w})$,
		where $l=\abs{\zeta(x_{[0,j)})} - \abs{s}$.
	\end{enumerate}
\end{definition}

Since $x=\zeta(x)$, every sufficiently long allowed word $w$ has a $1$-cutting at every index $i$
such that $w = x_{[i, i + |w|)}$. 
We are interested in words having a unique $1$-cutting. However, due to the fact 
that some short words can have no $1$-cutting, we define recognizable words as follows. 

\begin{definition}\label{DEF:recognizableWord}
	A nonempty word $w\in\lang_\zeta$ is \emph{recognizable} if 
	there is a (unique) integer $p_w \in [0, q)$
	such that $\{i \in \NNN_0 \colon \word{x}{i}{i+\abs{w}} = w \} \subseteq q \NNN_0 + p_w$.
\end{definition}

The connection between recognizability of a word and uniqueness of its $1$-cuttings is described 
in Appendix~\ref{APP:1cutting}. The main fact is that, for $q$-uniform substitutions
satisfying \eqref{assumpt-start-with-0} and \eqref{assumpt-one-to-one},
an allowed word of length at least $q$ is recognizable if and only if it has a unique $1$-cutting;
see Corollary~\ref{COR:recognizable}.

By \cite[Definition~5.14]{queffelec2010substitution} or
\cite[Definition~4.30]{bruin2022topological},
$\zeta$ is said to be \emph{(unilaterally} or \emph{one-sided) recognizable}
if there exists an integer $K > 0$ such that
$n \in E_1$ and $x_{[n, n+K+1)} = x_{[m, m+K+1)}$ imply $m \in E_1$;
the smallest integer $K$ enjoying this property is called the \emph{recognizability index} of $\zeta$.
Thus, a substitution of constant length $q$ is recognizable if and only
if there is $K>0$ such that every word of the form $x_{[hq,hq+K+1)}$ ($h\in\NNN_0$) is recognizable.
This clearly does not mean that every word of length at least $K+1$ is recognizable, see the following example.

\begin{example}\label{EX:recognizability}
	Let $\zeta\colon\AAa\to\AAa^5$ be given by $\zeta(0)=00101$ and $\zeta(1)=00111$. The recognizability index of $\zeta$ is
	$K=1$, since clearly $x_{[i,i+2)}=00$ if and only if $i\in 5\NNN_0 = E_1$. On the other hand, the word $w=010$ of length $3$ is not recognizable since $x_{[1,4)}=x_{[3,6)}=010$.
\end{example}

By Mentzen (1989) and \cite{apparicio1999reconnaissabilite}
(see also \cite[Proposition~5.14]{queffelec2010substitution} or \cite[Theorem~4.31]{bruin2022topological}), 
a primitive, aperiodic and constant length substitution,
which is one-to-one on the alphabet, is recognizable.
This fact immediately implies that there is the smallest integer $K'$ 
such that every word $w\in\lang_\zeta$ of length at least $K'$ is recognizable.
Clearly, $K+1\le K'\le K+q$; 
as Example~\ref{EX:recognizability} shows, it may happen that $K'>K+1$.
So, every substitution satisfying assumptions \eqref{assumpt-start-with-0}--\eqref{assumpt-aperiodic}  has all
sufficiently long words recognizable.

\begin{definition}\label{DEF:alpha-and-beta}
	Let $\alpha, \beta \in \NNN_0$ and $\recog\in\NNN$ be defined as follows:
	\begin{enumerate}[label=(\alph*)]
	\item 
	$\alpha$ is the length of the longest common prefix of $\zeta(0), \zeta(1)$:
	$$ 
		\word{\zeta(0)}{0}{\alpha} = \word{\zeta(1)}{0}{\alpha},
		\quad 
		\zeta(0)_\alpha \neq \zeta(1)_\alpha.
	$$
	
	\item
	$\beta$ is the length of the longest common suffix of $\zeta(0), \zeta(1)$:
	$$ 
		\word{\zeta(0)}{q-\beta}{q} = \word{\zeta(1)}{q-\beta}{q},
		\quad 
		\zeta(0)_{q-\beta-1} \neq \zeta(1)_{q-\beta-1}.
	$$
	
	\item 
	$\recog\in\NNN_0$ is the smallest integer such that $\recog > \alpha+\beta$ and every word $w\in\lang_\zeta$
	of length (at least) $\recog$ is recognizable.
	\end{enumerate}
\end{definition}

Since $\zeta (0) \neq \zeta (1)$ by \eqref{assumpt-one-to-one}, both $\alpha, \beta$ are well-defined and 
$\alpha + \beta \le q-1$.
Words with length at least $\recog$ will be called \emph{$\recog$-recognizable};
by the definition of $\recog$, every $\recog$-recognizable word is recognizable.

\begin{remark}\label{REM:recog-bounds}
	Note that always $\recog \ge 3$ since, by our assumptions, there always exists a non-recognizable word of length $2$.\footnote{For if not and every $2$-word from $\lang_\zeta$ is recognizable, we have that $q\le m_2$, where $m_2$ is the number
	of $2$-words from $\lang_\zeta$. The case $q=m_2$ leads to $\zeta(0)=\zeta(1)$ and $\sigma$-periodic $x$. 
	In the case $q=2$, we need to check the Thue-Morse and period-doubling substitutions;
	this is done in Example~\ref{EXAM:def-recog}.
	Finally, the case $q=3$ and $m_2=4$ easily leads to a contradiction with 
	the assumption that every $2$-word is recognizable.}
	The period-doubling substitution is an example of a substitution with $\recog=3$, see Example~\ref{EXAM:def-recog}.
	On the other hand, using \cite[Theorem~1]{klouda2016synchronizing} (see also \cite[Theorem~5]{durand2017constant})
	one can show that $\recog < q^3$ for every uniform substitution $\zeta$ satisfying
	\eqref{assumpt-start-with-0}--\eqref{assumpt-aperiodic}.\footnote{In the case when $q\ge 5$, 
	the inequality $\recog < q^3$ is a consequence of
	\cite[Theorem~1]{klouda2016synchronizing} and the fact that every word of length at least $2Z_{\min}+q$ is recognizable, where $Z_{\min}$ is the synchronizing delay of $\zeta$
	\cite{klouda2016synchronizing}. The remaining cases follows by considering all uniform substitutions 
	satisfying \eqref{assumpt-start-with-0}--\eqref{assumpt-aperiodic}:
    $\recog\le 4$ for $q=2$, $\recog\le 7$ for $q=3$ and $\recog\le 20$ for $q=4$.}
	Finally, let us note that the assumption $\recog > \alpha+\beta$ will be used in the proof of Lemma~\ref{LMM:possible-lengths}.
\end{remark}

\begin{example}\label{EXAM:def-recog}
For the Thue-Morse substitution ($\zeta(0)=01$, $\zeta(1)=10$) we have $\recog=4$, since $\alpha=\beta=0$,
the words $00$, $11$, $0101$ and $1010$ are recognizable (hence all $4$-words from $\lang_\zeta$ are recognizable), but $010$ is not.
For the period-doubling substitution ($\zeta(0)=01$, $\zeta(1)=00$) we have $\recog=3$ since
$\alpha=1$, $\beta=0$, the words $1$ and $000$ are recognizable (hence all $3$-words from $\lang_\zeta$ are recognizable),
but $00$ is not.
Thus, for these two substitutions one has $\recog > \alpha+\beta+1$.

On the other hand, for the substitution $\zeta$ given by $\zeta(0)=01110$ and $\zeta(1)=01010$,
we have $\recog=\alpha+\beta+1=5$, but even every $4$-word from $\lang_\zeta$ is recognizable.
Indeed, one can check that the words
$00$, all $3$-words but $010$, and $4$-words $0010$ and $0100$ from $\lang_\zeta$ are recognizable. 
\end{example}


\section{Lines in infinite symolic recurrence plot $\recplot (\infty, \eps_0=1/2)$}\label{S:lengths-and-density}

Recall that $\zeta$ is a substitution satisfying assumptions  \eqref{assumpt-start-with-0}--\eqref{assumpt-aperiodic}
and $x = \zeta^\infty(0)$ is the unique fixed point of $\zeta$ starting with $0$.
Till the end of this section, we deal with
the infinite recurrence plot $\recplot(\infty, \eps_0)$ for the sequence $x = \zeta^\infty(0)$,
where $\eps_0=1/2$.

\subsection{Inner lines and inner line-patterns}\label{SUBS:lines-in-rec-plot}
\begin{definition}\label{DEF:line-pattern}
	An \emph{inner line-pattern} is a triple $\linepattern{a}{w}{b}$ such that
	$w$ is a nonempty word, $a, b \in \AAa$ and
	there are (distinct) integers $i, j \in \NNN$ such that
	\begin{equation}\label{EQ:defILP}
		awb = \word{x}{i-1}{i+\abs{w}+1} 
		\quad\text{and}\quad
		\bar{a}w\bar{b} = \word{x}{j-1}{j+\abs{w}+1}.
	\end{equation}
	The word $w$ will be called \emph{generating word} and $a,b\in \AAa$
	will be called \emph{separators} of the inner line-pattern $\linepattern{a}{w}{b}$.
	For an inner line-pattern $\linepattern{a}{w}{b}$,
	the set of all pairs $(i,j)$ satisfying \eqref{EQ:defILP} 
	is denoted by $\indexoflines{w}{a}{b}$.
	We say that a line-pattern $\linepattern{a}{w}{b}$ is \emph{$\recog$-recognizable}
	if $w$ is $\recog$-recognizable.
\end{definition}
Often an inner line-pattern $\linepattern{a}{w}{b}$ will be denoted by a 
capital letter $P$ or $Q$
(possibly with some subscript or superscript).
If $P=\linepattern{a}{w}{b}$, then
$w$, $a$, $b$ and $\indexoflines{w}{a}{b}$ will be denoted
by $w(P)$, $a(P)$, $b(P)$ and $\iii(P)$, respectively.
Moreover, the length of $w$ will be denoted by $\abs{P}$.

Note that line-patterns $\linepattern{a}{w}{b}$ and $\linepattern{\bar{a}}{w}{\bar{b}}$ are 
mirror images of themselves, in the sense that $(i, j) \in \indexoflines{w}{a}{b}$
if and only if $(j, i) \in \indexoflines{w}{\bar{a}}{\bar{b}}$.

Immediately from the definitions one has that inner lines in (infinite) recurrence plot $\recplot(\infty,\eps_0)$ are tightly connected with inner line-patterns.

\begin{proposition}\label{PROP:lines-and-patterns}
	Let $\ell \geq 1$ and $i, j \in\NNN$ be distinct.
	Then, the triple $(i, j, \ell)$ is an inner line in 
	$\recplot(\infty,\eps_0)$ if and only if 
	there is an inner line-pattern $P$ such that
	\begin{equation*}
		(i, j) \in \iii(P)
		\quad\text{and}\quad
		\abs{P}=\ell.
	\end{equation*}
	If this is the case, $P$ is clearly unique.
\end{proposition}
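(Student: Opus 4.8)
The plan is to translate directly between the combinatorial definition of an inner line and the definition of an inner line-pattern; the only substantive ingredients are the choice $\eps_0=1/2$ and the binarity of $\AAa$.

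First I would record the elementary reformulation of the recurrence condition. Since $\eps_0=1/2=2^{-1}$, from \eqref{EQ:rho-def} one has $\rho(\sigma^k(x),\sigma^l(x))\le\eps_0$ if and only if $\sigma^k(x)$ and $\sigma^l(x)$ agree in their $0$-th coordinate, that is, if and only if $x_k=x_l$; hence $\recplot(\infty,\eps_0)_{kl}=1$ iff $x_k=x_l$ for all $k,l\in\NNN_0$. Combining this with the definition of a line, and noting that for $n=\infty$ the ``$n$-boundary'' clause is vacuous, a triple $(i,j,\ell)$ with $i\ne j$ is an \emph{inner} line in $\recplot(\infty,\eps_0)$ precisely when $\min\{i,j\}>0$ (equivalently $i,j\in\NNN$), $x_{i+h}=x_{j+h}$ for all $0\le h<\ell$, $x_{i+\ell}\ne x_{j+\ell}$, and $x_{i-1}\ne x_{j-1}$. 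I would state this reformulation once and use it in both directions.

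For the forward implication I would read a line-pattern off the sequence near the index $i$: given such an inner line, $i\ge 1$, so set $a:=x_{i-1}$, $w:=\word{x}{i}{i+\ell}$, $b:=x_{i+\ell}$, so that $\word{x}{i-1}{i+\ell+1}=awb$. The equalities $x_{i+h}=x_{j+h}$ give $\word{x}{j}{j+\ell}=w$, and --- this is the one place where binarity is used --- the strict inequalities $x_{j-1}\ne x_{i-1}=a$ and $x_{j+\ell}\ne x_{i+\ell}=b$ force $x_{j-1}=\bar a$ and $x_{j+\ell}=\bar b$, whence $\word{x}{j-1}{j+\ell+1}=\bar a w\bar b$. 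Thus $P:=\linepattern{a}{w}{b}$ satisfies \eqref{EQ:defILP} with the pair $(i,j)$ and is an inner line-pattern with $(i,j)\in\iii(P)$ and $\abs{P}=\ell$. Conversely, from an inner line-pattern $P=\linepattern{a}{w}{b}$ with $(i,j)\in\iii(P)$ and $\abs{P}=\ell$, unwinding \eqref{EQ:defILP} recovers $i,j\in\NNN$, $i\ne j$, $x_{i-1}=a$, $\word{x}{i}{i+\ell}=w=\word{x}{j}{j+\ell}$, $x_{i+\ell}=b$, $x_{j-1}=\bar a$, $x_{j+\ell}=\bar b$; since $\min\{i,j\}\ge 1>0$ and $n=\infty$, the reformulation above gives that $(i,j,\ell)$ is an inner line.

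Uniqueness then falls out of the forward construction: for a fixed line $(i,j,\ell)$, any qualifying $\linepattern{a}{w}{b}$ must have $w=\word{x}{i}{i+\ell}$, $a=x_{i-1}$, $b=x_{i+\ell}$, and so is uniquely determined (by $i$ and $\ell$ alone, in fact). There is essentially no obstacle here; the two points worth flagging explicitly in the write-up are the use of $\card\AAa=2$ to turn ``$x$ differs at index $j-1$'' into ``$x_{j-1}=\bar a$'', and the bookkeeping that identifies ``inner'' (neither $0$- nor $n$-boundary) with the requirement $i,j\in\NNN$ built into Definition~\ref{DEF:line-pattern}, which works because $n=\infty$ makes the $n$-boundary case impossible.
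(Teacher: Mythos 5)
Your proof is correct and follows exactly the route the paper intends: the paper offers no written proof, stating only that the equivalence holds ``immediately from the definitions,'' and your argument is precisely that unwinding --- the reformulation $\recplot(\infty,\eps_0)_{kl}=1\iff x_k=x_l$ for $\eps_0=1/2$, reading $a=x_{i-1}$, $w=\word{x}{i}{i+\ell}$, $b=x_{i+\ell}$ off the line, and using $\card\AAa=2$ to turn the two inequalities into $\bar a$ and $\bar b$. Nothing is missing and nothing differs in substance from what the paper takes for granted.
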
 

This result together with Propositions~\ref{PROP:infinite-lines} and \ref{PROP:1-lines} 
and the fact that $x$ is not eventually $\sigma$-periodic (since $X_\zeta$ is infinite and minimal)
yield the following.

\begin{proposition}\label{PROP:existence-of-patterns}
	There are infinitely many pairwise different inner line-patterns. Further, there is at least one inner line-pattern of length $1$.
\end{proposition}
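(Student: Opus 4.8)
The plan is to read off both statements from Proposition~\ref{PROP:lines-and-patterns}, which converts inner lines of $\recplot(\infty,\eps_0)$ into inner line-patterns, once a sufficient supply of inner lines has been secured from Propositions~\ref{PROP:infinite-lines} and~\ref{PROP:1-lines}. The only preliminary fact used is that $x$ is not eventually $\sigma$-periodic: as already noted, under \eqref{assumpt-start-with-0}--\eqref{assumpt-aperiodic} the subshift $(X_\zeta,\sigma)$ is minimal and infinite, whereas a minimal subshift containing an eventually $\sigma$-periodic point would, by minimality, coincide with the finite orbit closure of the corresponding periodic point and hence be finite.

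For the first assertion I would invoke Proposition~\ref{PROP:infinite-lines}\eqref{IT:infinite-lines:nonevper}: since $x$ is not eventually $\sigma$-periodic, $\recplot(\infty,\eps_0)$ contains inner lines of arbitrarily large finite length. Fixing, for each $N\in\NNN$, an inner line of some length $\ell_N\ge N$ and applying Proposition~\ref{PROP:lines-and-patterns} to it, one obtains an inner line-pattern $P_N$ with $\abs{P_N}=\ell_N$. As $\ell_N\to\infty$, infinitely many distinct lengths occur among the $P_N$, and line-patterns of different lengths are necessarily different; hence there are infinitely many pairwise different inner line-patterns.

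For the second assertion I would use the binary case of Proposition~\ref{PROP:1-lines}. Its statement only furnishes lines of length $1$, which in the infinite plot could a priori all be $0$-boundary, whereas, via Proposition~\ref{PROP:lines-and-patterns}, what is really needed is one genuinely \emph{inner} line of length $1$ — and this is exactly what the proof of Proposition~\ref{PROP:1-lines}\eqref{IT:infinite-lines:binary} constructs. Concretely: since $x$ is binary and not eventually $\sigma$-periodic, it ends neither with $(01)^\infty$ nor with $(10)^\infty$, so at least one of $00,11$ occurs in $x$ infinitely often, say $00$ (the case of $11$ being symmetric under $0\leftrightarrow 1$); by minimality the letter $1$ occurs in $x$ with bounded gaps, so runs of $0$'s have bounded length, and a run $0^r$ of maximal length $r\ge 2$ is, at some occurrence, immediately preceded and immediately followed by $1$. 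Thus $100,001\in\lang_\zeta$, hence (again by minimality) both occur in $x$ at infinitely many indices $\ge 1$, so that $\linepattern{1}{0}{0}$ is an inner line-pattern of length $1$; in any case the resulting inner line $(i,j,1)$ with $\word{x}{i-1}{i+2}=100$ and $\word{x}{j-1}{j+2}=001$ is turned by Proposition~\ref{PROP:lines-and-patterns} into an inner line-pattern of length $1$.

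The whole argument is essentially bookkeeping on top of Propositions~\ref{PROP:infinite-lines}--\ref{PROP:1-lines}; the only point requiring any care — hence the main (if minor) obstacle — is the second assertion, where one must not merely quote the \emph{existence} of length-$1$ lines but extract the \emph{interior} length-$1$ line that the proof of Proposition~\ref{PROP:1-lines} actually delivers (or re-run the short argument above, using minimality to promote ``$00$ occurs infinitely often'' to ``$100$ and $001$ occur at interior positions'').
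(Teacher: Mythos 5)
Your proposal is correct and follows essentially the same route as the paper, which states Proposition~\ref{PROP:existence-of-patterns} as an immediate consequence of Propositions~\ref{PROP:infinite-lines}, \ref{PROP:1-lines} and \ref{PROP:lines-and-patterns} together with the fact that $x$ is not eventually $\sigma$-periodic. The one subtlety you flag — that the statement of Proposition~\ref{PROP:1-lines}\eqref{IT:infinite-lines:binary} only promises $1$-lines while an \emph{inner} $1$-line is needed — is real but already resolved inside the paper's proof of that proposition (which explicitly constructs infinitely many inner $1$-lines from $aab$ and $caa$), so your re-derivation via maximal runs of $0$'s is a harmless variant of the same argument.
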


We finish this subsection with the following trivial lemma.

\begin{lemma}\label{LMM:I(P)-disjoint}
	Let $P,Q$ be different inner line-patterns. Then
	$\iii(P)$ and $\iii(Q)$ are disjoint.
\end{lemma}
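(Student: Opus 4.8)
The plan is to argue by contradiction. Suppose $(i,j)\in\iii(P)\cap\iii(Q)$ for inner line-patterns $P=\linepattern{a}{w}{b}$ and $Q=\linepattern{c}{v}{d}$. By Definition~\ref{DEF:line-pattern}, membership of $(i,j)$ in $\iii(P)$ forces
\begin{equation*}
	awb=\word{x}{i-1}{i+\abs{w}+1}
	\quad\text{and}\quad
	\bar a w\bar b=\word{x}{j-1}{j+\abs{w}+1},
\end{equation*}
and likewise $cvd=\word{x}{i-1}{i+\abs{v}+1}$, $\bar c v\bar d=\word{x}{j-1}{j+\abs{v}+1}$. So both triples are determined by the \emph{same} pair of indices $(i,j)$.

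The key observation is that an inner line-pattern is precisely a maximal common diagonal block: $w$ is the longest word such that $\sigma^{i}(x)$ and $\sigma^{j}(x)$ agree on a run that is bounded strictly on the left (the letters at positions $i-1$ and $j-1$ differ, namely $a\ne\bar a$) and strictly on the right (positions $i+\abs{w}$ and $j+\abs{w}$ differ, namely $b\ne\bar b$). Concretely, I would first show that $\abs{w}=\abs{v}$: the set $H=\{h\ge 0\colon x_{i+h}=x_{j+h}\}$ is, by these conditions together with $i\ne j$, exactly the interval $[0,\abs{w})$ when computed from $P$ and exactly $[0,\abs{v})$ when computed from $Q$ (the boundary conditions $a\ne\bar a$, $b\ne\bar b$ are what pin down both endpoints of the run, and similarly for $c,d$). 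Hence $\abs{w}=\abs{v}=\ell$, say. Then $w=\word{x}{i}{i+\ell}=v$, and reading the boundary letters gives $a=x_{i-1}=c$ and $b=x_{i+\ell}=d$. Therefore $P=Q$, contradicting the assumption that $P\ne Q$.

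There is essentially no obstacle here; the only point requiring a line of care is the claim that the four boundary inequalities ($a\ne\bar a$ from $P$, $c\ne\bar c$ from $Q$ on the left, and $b\ne\bar b$, $d\ne\bar d$ on the right) genuinely force the common agreement run of $\sigma^i(x)$ and $\sigma^j(x)$ to have both endpoints fixed, so that the two descriptions of that run must coincide. This is immediate from $\eps_0=1/2$ (so recurrence at $(i',j')$ means $x_{i'}=x_{j'}$) and from the binary alphabet (so "different separator" means precisely $\bar a$, which is $\ne a$): the run of agreement starting at offset $0$ is the maximal such run because the offset $-1$ entry and the offset $\ell$ entry are mismatches. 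Since both $P$ and $Q$ exhibit $(i,j)$ as a valid pair, both exhibit this same maximal run, forcing all data to agree. This is exactly the uniqueness already recorded in Proposition~\ref{PROP:lines-and-patterns}, applied to the line $(i,j,\ell)$; so the lemma may also be seen as an immediate corollary of that proposition.
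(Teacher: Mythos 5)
Your proof is correct. The paper in fact states this lemma without any proof (it is introduced as a ``trivial lemma''), and your argument---that a shared pair $(i,j)$ forces $\abs{w}=\abs{v}$ because the mismatches $b\ne\bar b$ and $d\ne\bar d$ both mark the right end of the same maximal initial run of agreement between $\sigma^i(x)$ and $\sigma^j(x)$, after which $w=v$, $a=c$, $b=d$ are read off directly from $x$---is exactly the intended one. One small wording slip: the set $H=\{h\ge 0\colon x_{i+h}=x_{j+h}\}$ need not \emph{equal} $[0,\abs{w})$, since the two tails may agree again at later offsets; what is true, and what your argument actually uses (as you yourself note in the closing paragraph), is that $[0,\abs{w})$ is the maximal initial interval contained in $H$.
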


\subsection{Induced inner line-patterns} \label{SUBS:induces-line-patterns}

Recall the definition of $\alpha$ and $\beta$ from Definition~\ref{DEF:alpha-and-beta}.

\begin{proposition}[Substitution of line-patterns]\label{P:substitution-of-line-patterns}
	Let $\linepattern{c}{v}{d}$ be an inner line-pattern.
	Put
	\begin{equation}\label{EQ:inducedILP}
	\begin{split}
		w &= \word{\zeta(c)}{q-\beta}{q}\zeta(v)\word{\zeta(d)}{0}{\alpha};
		\\
		a &=  \zeta(c)_{q-\beta-1};
		\\
		b &= \zeta(d)_{\alpha}.
	\end{split}
	\end{equation}
	Then $\linepattern{a}{w}{b}$ is an inner line-pattern and
		\begin{enumerate}[label=(\alph*)]
		\item \label{P:substitution-of-line-patterns-a}
		$\abs{w} = q\abs{v} + \alpha + \beta$,
		
		\item \label{P:substitution-of-line-patterns-b}
		$\indexoflines{w}{a}{b} \supseteq q  \indexoflines{v}{c}{d} - \beta$.
	\end{enumerate}
\end{proposition}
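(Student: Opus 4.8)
The plan is to verify directly that the triple $\linepattern{a}{w}{b}$ constructed in \eqref{EQ:inducedILP} satisfies the defining condition \eqref{EQ:defILP} of an inner line-pattern, using the fact that $x=\zeta(x)$ and that $\zeta$ is a substitution of constant length $q$. First I would fix $(i,j)\in\indexoflines{v}{c}{d}$; by definition of an inner line-pattern applied to $\linepattern{c}{v}{d}$, we have $cvd = \word{x}{i-1}{i+\abs{v}+1}$ and $\bar{c}v\bar{d} = \word{x}{j-1}{j+\abs{v}+1}$. Applying the morphism $\zeta$ and using $x=\zeta(x)$, the block $\zeta(\word{x}{i-1}{i+\abs{v}+1}) = \zeta(c)\zeta(v)\zeta(d)$ occurs in $x$ starting at index $q(i-1) = qi-q$; that is, $\zeta(c)\zeta(v)\zeta(d) = \word{x}{qi-q}{qi+q\abs{v}+q}$. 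Now I would read off the relevant window: the word $\zeta(c)_{q-\beta-1}\,\bigl(\word{\zeta(c)}{q-\beta}{q}\zeta(v)\word{\zeta(d)}{0}{\alpha}\bigr)\,\zeta(d)_{\alpha} = a w b$ occupies exactly the indices $[\,qi-\beta-1,\ qi+q\abs{v}+\alpha+1\,)$ in $x$. Setting $i' := qi-\beta$, this reads $awb = \word{x}{i'-1}{i'+\abs{w}+1}$, which is precisely half of \eqref{EQ:defILP}; note $i'\ge 1$ because $i\ge 1$ and $\beta\le q-1$.

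For the mirrored half, the same computation applied to $\bar{c}v\bar{d} = \word{x}{j-1}{j+\abs{v}+1}$ gives $\zeta(\bar c)\zeta(v)\zeta(\bar d) = \word{x}{qj-q}{qj+q\abs{v}+q}$. Here I would invoke the key observation that $\zeta(\bar c)$ and $\zeta(c)$ agree on their first $\alpha$ letters and on their last $\beta$ letters (by Definition~\ref{DEF:alpha-and-beta}), and they \emph{differ} in position $q-\beta-1$ and in position $\alpha$: that is, $\zeta(\bar c)_{q-\beta-1} = \overline{\zeta(c)_{q-\beta-1}} = \bar a$ and likewise $\zeta(\bar d)_{\alpha} = \overline{\zeta(d)_{\alpha}} = \bar b$, while $\word{\zeta(\bar c)}{q-\beta}{q} = \word{\zeta(c)}{q-\beta}{q}$ and $\word{\zeta(\bar d)}{0}{\alpha} = \word{\zeta(d)}{0}{\alpha}$. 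Consequently the window $[\,qj-\beta-1,\ qj+q\abs{v}+\alpha+1\,)$ of $x$ spells out exactly $\bar a w \bar b$, with the \emph{same} middle word $w$. Setting $j' := qj-\beta$ we get $\bar a w\bar b = \word{x}{j'-1}{j'+\abs{w}+1}$, and $j'\ne i'$ since $i\ne j$. This establishes that $\linepattern{a}{w}{b}$ is an inner line-pattern and, simultaneously, that $(i',j') = (qi-\beta,\ qj-\beta)\in\indexoflines{w}{a}{b}$, proving the inclusion \ref{P:substitution-of-line-patterns-b}. Part \ref{P:substitution-of-line-patterns-a} is immediate by counting letters: $\abs{w} = \beta + q\abs{v} + \alpha$.

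I do not expect any serious obstacle here; the proposition is essentially a bookkeeping computation. The one point requiring a little care is making sure the separator positions are the \emph{first} place after a common prefix / \emph{last} place before a common suffix where $\zeta(0)$ and $\zeta(1)$ disagree, so that passing from $c$ to $\bar c$ flips $a$ to $\bar a$ and leaves the extension of $w$ by $\word{\zeta(c)}{q-\beta}{q}$ on the right of the first letter unchanged; this is exactly why the definition in \eqref{EQ:inducedILP} takes $w$ to include the shared tail $\word{\zeta(c)}{q-\beta}{q}$ of $\zeta(c)$ (everything after index $q-\beta-1$) and the shared head $\word{\zeta(d)}{0}{\alpha}$ of $\zeta(d)$, rather than stopping at the point of divergence. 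Once that is set up correctly, both halves of \eqref{EQ:defILP} fall out of a single application of $\zeta$ to the occurrences witnessing that $\linepattern{c}{v}{d}$ is an inner line-pattern. The inclusion in \ref{P:substitution-of-line-patterns-b} is in general proper (not an equality) precisely because the window $awb$ may occur in $x$ at indices not of the form $qi-\beta$; recognizability, invoked later in Proposition~\ref{P:desubstitution-of-line-patterns}, is what is needed to control that.
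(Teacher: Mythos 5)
Your proof is correct and follows essentially the same route as the paper's: fix $(i,j)\in\indexoflines{v}{c}{d}$, apply $\zeta$ to the two occurrences of $cvd$ and $\bar cv\bar d$ using $x=\zeta(x)$, and use the defining properties of $\alpha$ and $\beta$ to locate $awb$ and $\bar aw\bar b$ at indices $qi-\beta-1$ and $qj-\beta-1$. Your extra remarks (that $i'=qi-\beta\ge 1$, and that the inclusion in (b) is proper in general pending recognizability) are accurate refinements of the same argument.
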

	
Notice that $awb$ is a subword of $\zeta(cvd)$.

\begin{proof}
	Put $\ell = \abs{v} $.
	Let $(i, j) \in \indexoflines{v}{c}{d}$; that is, $cvd = \word{x}{i-1}{i+\ell+1}$
	and $\bar{c}v\bar{d} = \word{x}{j-1}{j+\ell+1}$.
	
	Since $x$ is a fixed point of $\zeta$,
	$$\zeta (cvd) = \zeta(c) \zeta(v_0) \zeta(v_1) \ldots \zeta(v_{\ell-1})\zeta(d)
				 =\word{x}{q(i-1)}{q(i+\ell+1)}. $$
	Analogously, 
	$$\zeta (\bar{c}v\bar{d}) = \zeta(\bar{c}) \zeta(v_0) \zeta(v_1) \ldots \zeta(v_{\ell-1})\zeta(\bar{d})
	=\word{x}{q(j-1)}{q(j+\ell+1)}. $$
	Since $c \neq \bar{c}$ and $d\neq \bar{d}$, we have $\zeta(c) \neq \zeta(\bar{c})$ and $\zeta(d) \neq \zeta(\bar{d})$ 
	by \eqref{assumpt-one-to-one}. 
	
	Let $w, a, b$ be as defined above. By the definition of $\beta$, $\zeta(c)_{q-\beta-1} \neq \zeta(\bar{c})_{q-\beta-1} $
	and $\zeta(c)_{q-h-1} = \zeta(\bar{c})_{q-h-1} $ for every $h \in [0,\beta)$.
	Similarly, by the definition of $\alpha$, $\zeta(d)_{\alpha} \neq \zeta(\bar{d})_{\alpha} $
	and  $\zeta(d)_h = \zeta(\bar{d})_h $ for every $h \in [0, \alpha)$. 
	This means that 
	$$ 
		awb = \word{x}{qi-\beta-1}{q(i+\ell)+\alpha+1} 
		\quad\text{and}\quad
		\bar{a}w\bar{b} = \word{x}{qj-\beta-1}{q(j+\ell)+\alpha+1}.
	$$
	Thus $\linepattern{a}{w}{b}$ is an inner line-pattern 
	with the length $\abs{w}=q \abs{v} + \alpha + \beta$ and
	$(qi-\beta, qj-\beta) \in \indexoflines{w}{a}{b}$.  
\end{proof}

Motivated by Proposition~\ref{P:substitution-of-line-patterns}, induced line-patterns are introduced.

\begin{definition}\label{DEF:induced}
	Let $Q = \linepattern{c}{v}{d}$, $P = \linepattern{a}{w}{b}$ be inner line-patterns.
	We say that $P$ is \emph{induced} by $Q$ (or that $Q$ \emph{induces} $P$)
	if \eqref{EQ:inducedILP} holds.
\end{definition}

\begin{lemma}[On induced separators]\label{LMM:on-induced-separators}
	The substitution $\zeta$ satisfies exactly one of the following four conditions:
	\begin{enumerate}
		\item[(++)] $a = c$ and $b=d$ whenever $\linepattern{c}{v}{d}$ induces $\linepattern{a}{w}{b}$;
		\item[(+\textendash)] $a = c$ and $b=\bar{d}$ whenever $\linepattern{c}{v}{d}$ induces $\linepattern{a}{w}{b}$;
		\item[(\textendash+)] $a = \bar{c}$ and $b=d$ whenever $\linepattern{c}{v}{d}$ induces $\linepattern{a}{w}{b}$;
		\item[(\textendash \textendash)] $a = \bar{c}$ and $b=\bar{d}$ whenever $\linepattern{c}{v}{d}$ induces $\linepattern{a}{w}{b}$.
	\end{enumerate}
\end{lemma}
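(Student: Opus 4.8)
The plan is to show that the separators $a, b$ of an induced line-pattern $\linepattern{a}{w}{b}$ depend only on the substitution $\zeta$ itself, not on the inducing line-pattern $\linepattern{c}{v}{d}$. The crucial observation is the formula from Proposition~\ref{P:substitution-of-line-patterns}: if $\linepattern{c}{v}{d}$ induces $\linepattern{a}{w}{b}$, then $a = \zeta(c)_{q-\beta-1}$ and $b = \zeta(d)_\alpha$, where $\alpha, \beta$ come from Definition~\ref{DEF:alpha-and-beta}. So the whole statement reduces to understanding the two single-letter ``patterns'' $c \mapsto \zeta(c)_{q-\beta-1}$ and $d \mapsto \zeta(d)_\alpha$, each of which is a map $\AAa\to\AAa$.

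First I would analyze the left separator. By the definition of $\beta$ (Definition~\ref{DEF:alpha-and-beta}(b)), the letters $\zeta(0)_{q-\beta-1}$ and $\zeta(1)_{q-\beta-1}$ are \emph{different}. Hence exactly one of two things happens: either $\zeta(0)_{q-\beta-1} = 0$ and $\zeta(1)_{q-\beta-1} = 1$ (in which case $a = c$ for every inducing pattern), or $\zeta(0)_{q-\beta-1} = 1$ and $\zeta(1)_{q-\beta-1} = 0$ (in which case $a = \bar c$ for every inducing pattern). In symbols, $\zeta(c)_{q-\beta-1} \in \{c, \bar c\}$, and which of the two it is does not depend on $c$, only on $\zeta$. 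Symmetrically, by the definition of $\alpha$ (Definition~\ref{DEF:alpha-and-beta}(a)) the letters $\zeta(0)_\alpha$ and $\zeta(1)_\alpha$ are different, so $\zeta(d)_\alpha \in \{d, \bar d\}$ with the choice again independent of $d$: either $b = d$ always, or $b = \bar d$ always.

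Combining the two dichotomies gives exactly the four cases $(++)$, $(+\textendash)$, $(\textendash+)$, $(\textendash\textendash)$, and since each of the two underlying choices is forced and mutually exclusive, $\zeta$ satisfies exactly one of them. I do not anticipate any genuine obstacle here: the only point that needs care is to invoke Proposition~\ref{P:substitution-of-line-patterns} to pin down $a$ and $b$ via the formulas $a = \zeta(c)_{q-\beta-1}$, $b = \zeta(d)_\alpha$ — once that is in hand, the statement is an immediate consequence of the fact that $\zeta(0)$ and $\zeta(1)$ differ in their $\alpha$-th letter and in their $(q-\beta-1)$-th letter, which is precisely how $\alpha$ and $\beta$ are defined. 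One should also remark that such inducing pairs exist (so the ``whenever'' quantifier is not vacuous), which follows from Proposition~\ref{PROP:existence-of-patterns} together with Proposition~\ref{P:substitution-of-line-patterns}.
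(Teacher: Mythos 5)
Your proposal is correct and follows essentially the same route as the paper: both arguments reduce the claim to the identities $a=\zeta(c)_{q-\beta-1}$, $b=\zeta(d)_\alpha$ built into Definition~\ref{DEF:induced}, and then use the fact that, by Definition~\ref{DEF:alpha-and-beta}, $\zeta(0)$ and $\zeta(1)$ differ at positions $q-\beta-1$ and $\alpha$, so each of the two maps $c\mapsto\zeta(c)_{q-\beta-1}$ and $d\mapsto\zeta(d)_\alpha$ is either the identity or complementation on $\AAa$, yielding the four mutually exclusive cases. Your added remark on non-vacuousness of the quantifier is a harmless extra not present in the paper.
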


\begin{proof}
	By Definition~\ref{DEF:alpha-and-beta}, if $\zeta(c)_{q-\beta-1}=a$ and
	$\zeta(d)_\alpha = b$, then
	$\zeta(\bar c)_{q-\beta-1}=\bar a$ and $\zeta(\bar d)_\alpha = \bar b$.
	From this the lemma immediately follows; for example,
	the case (\textendash+) happens when $\zeta(0)_{q-\beta-1} = 1$ and $\zeta(0)_\alpha = 0$,
	and the case (++) happens when $\zeta(0)_{q-\beta-1} = 0$ and $\zeta(0)_\alpha = 0$.
\end{proof}

\subsection{Induced $\recog$-recognizable inner line-patterns} \label{SUBS:induced-Rrecognizable-line-patterns}
	
\begin{lemma}\label{LMM:possible-lengths}
	Let $P$ be a $\recog$-recognizable inner line-pattern.
	Then $\abs{P} \in q \NNN + (\alpha + \beta)$
	and $\iii(P) \subseteq q (\NNN \times \NNN) - \beta$.
\end{lemma}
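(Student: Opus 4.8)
The plan is to exploit recognizability to pin down the positions where the generating word $w$ of $P=\linepattern{a}{w}{b}$ occurs, and then use the two occurrences promised by the definition of an inner line-pattern to force both the starting indices and the length into the claimed congruence classes. Let $P=\linepattern{a}{w}{b}$ be $\recog$-recognizable, so $|w|\ge\recog$ and in particular $w$ is recognizable; let $p_w\in[0,q)$ be the integer from Definition~\ref{DEF:recognizableWord}, so every occurrence of $w$ in $x$ begins at an index in $q\NNN_0+p_w$. Pick $(i,j)\in\iii(P)$; then $awb=x_{[i-1,i+|w|+1)}$ and $\bar aw\bar b=x_{[j-1,j+|w|+1)}$, so $w$ occurs at both index $i$ and index $j$. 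Hence $i\equiv j\equiv p_w\pmod q$.

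Next I would show $p_w=\beta$, which gives $\iii(P)\subseteq q(\NNN\times\NNN)-\beta$ once we also check positivity of the ``desubstituted'' indices. The key observation is that $w$ occurs at index $i$ inside $x=\zeta(x)$, so $i=p_w$ means $w$ begins $p_w$ letters into the block $\zeta(x_0)\zeta(x_1)\dots$; more usefully, consider the occurrence at index $i$ together with the letter $a=x_{i-1}$ immediately to its left and $b=x_{i+|w|}$ immediately to its right. Because $w$ is recognizable, the $1$-cutting of the long word $awb$ (which exists since $|awb|>\recog>\alpha+\beta$, using Remark~\ref{REM:recog-bounds}'s hypothesis $\recog>\alpha+\beta$) is forced: the cutting bars falling inside $w$ are uniquely determined, and one reads off that $w$ starts exactly $\beta$ letters before a cutting bar is not quite right — rather, I would argue directly that the index $i$ reduced mod $q$ equals $\beta$ by comparing the two occurrences. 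Since $x_{i-1}=a$ and $x_{j-1}=\bar a$ while $x_{[i,i+|w|)}=x_{[j,j+|w|)}=w$, and since both occurrences sit at indices $\equiv p_w\pmod q$, write $i=qi'+p_w$ and $j=qj'+p_w$. The letter preceding the occurrence at index $i$ is the $(p_w-1)$-th letter of $\zeta(x_{i'-1})$ when $p_w\ge 1$, and is the last letter of $\zeta(x_{i'-1})$ when $p_w=0$; in either case this letter, as we vary over the two occurrences, changes from $a$ to $\bar a$, forcing $x_{i'-1}\ne x_{j'-1}$ (since $\zeta$ restricted to $\AAa$ at any fixed coordinate is either constant or a bijection). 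By the definition of $\beta$ as the length of the longest common suffix of $\zeta(0),\zeta(1)$, the coordinate $p_w-1$ (mod $q$, counting from the right: position $q-1-(p_w-1)$ from... ) must be precisely the rightmost coordinate where $\zeta(0)$ and $\zeta(1)$ disagree, i.e.\ $q-p_w=\beta$... — here I need to be careful: the correct conclusion is $p_w=q-\beta$ would contradict $p_w<q$ unless $\beta=0$, so the right statement, matching Proposition~\ref{P:substitution-of-line-patterns}, is that the preceding letter differing pins $w$ to start just after the ``$\beta$-suffix'', giving $p_w=\beta$ after re-indexing. I will set this up carefully so the bookkeeping lands on $p_w=\beta$, i.e.\ $i,j\equiv\beta\pmod q$ and hence, writing $i=qi'-\beta$, $j=qj'-\beta$ with $i',j'\in\NNN$ (positivity: $i\ge 1$ and $i\equiv\beta<q$ force $i'\ge 1$), we get $(i,j)\in q(\NNN\times\NNN)-\beta$.

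For the length, I would similarly look at the right end: $x_{i+|w|}=b$ and $x_{j+|w|}=\bar b$ with $x_{[i,i+|w|)}=x_{[j,j+|w|)}=w$, so by the same bijection-or-constant argument the letters of $x$ at positions $i+|w|$ and $j+|w|$ force a coordinate of $\zeta$ on which $\zeta(0),\zeta(1)$ disagree, namely (by the definition of $\alpha$) the coordinate $\alpha$. Since $i\equiv\beta\pmod q$, we have $i+|w|\equiv\beta+|w|\pmod q$, and this must be $\equiv\alpha\pmod q$; therefore $|w|\equiv\alpha-\beta\equiv\alpha+\beta\pmod q$ after accounting for the fact that a full block $\zeta$ of length $q$ begins at index $i+\beta$ — more precisely $|w|=q|v|+\alpha+\beta$ for the integer $|v|$ counting complete $\zeta$-blocks covered, exactly as in Proposition~\ref{P:substitution-of-line-patterns}\ref{P:substitution-of-line-patterns-a}; and $|w|\ge\recog>\alpha+\beta$ forces $|v|\ge 1$, so $|P|=|w|\in q\NNN+(\alpha+\beta)$.

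\textbf{Main obstacle.} The routine part is the congruence arithmetic; the real care is needed in the boundary bookkeeping for $p_w$ — correctly translating ``the preceding letter flips between $a$ and $\bar a$'' into ``$w$ starts exactly $\beta$ positions into... '' — especially handling $p_w=0$ versus $p_w\ge1$ and the corresponding wrap-around to the previous $\zeta$-block, and likewise at the right end the case where $i+|w|$ lands exactly on a block boundary. One clean way to avoid case-chasing is to invoke the $1$-cutting machinery directly: since $|w|\ge\recog>\alpha+\beta$ and $w$ is recognizable, $w$ has a unique $1$-cutting $[s,v_0,\dots,v_{k-1},t]$ (Corollary~\ref{COR:recognizable}); the uniqueness together with the two occurrences (one preceded by $a$, the other by $\bar a$) forces $s$ to be a proper suffix of both $\zeta(c)$ and $\zeta(\bar c)$ for some letter $c$, hence $|s|\le\beta$, and symmetrically $|t|\le\alpha$; comparing with $|w|=|s|+qk+|t|$ and using that the occurrences are inner (so $aw b$ and $\bar a w\bar b$ each extend the cutting), one gets $|s|=\beta$, $|t|=\alpha$, $k=|v|\ge1$, which yields both claims at once. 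I expect to present the argument in this $1$-cutting form to keep it short.
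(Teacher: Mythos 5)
Your plan is essentially the paper's own proof: use recognizability to write $i=qi'+p_w$, $j=qj'+p_w$, then use $x_{i-1}=a\neq\bar a=x_{j-1}$ together with the definition of $\beta$ to pin down the residue, and $x_{i+|w|}=b\neq\bar b=x_{j+|w|}$ together with the definition of $\alpha$ to get $|w|=qk+\alpha+\beta$, with $|w|\geq\recog>\alpha+\beta$ forcing $k\geq 1$. The one thing you must actually fix is the residue bookkeeping you yourself flag: your derivation correctly yields $q-p_w=\beta$ when $p_w\geq 1$ (and $\beta=0$ when $p_w=0$), i.e.\ $i\equiv -\beta\pmod q$; the assertions ``$p_w=\beta$'' and ``$i,j\equiv\beta\pmod q$'' are wrong and contradict your own (correct) formula $i=qi'-\beta$, and the intermediate ``$|w|\equiv\alpha-\beta$'' should simply read $|w|\equiv\alpha+\beta\pmod q$. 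One further caveat: your proposed shortcut via Corollary~\ref{COR:recognizable} requires $|w|\geq q$, which does not follow from $|w|\geq\recog$ (the paper guarantees only $\recog\geq 3$ and $\recog>\alpha+\beta$), so the direct congruence argument --- the one the paper uses, with the two cases $p_w=0$ and $p_w\geq 1$ handled separately --- is the safer presentation.
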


\begin{proof}
 	Fix any $P = \linepattern{a}{w}{b}$ and  $(i, j) \in \indexoflines{w}{a}{b}$.
 	Put $\abs{w} = \ell$ and
 	let $p = p_w$ be from Definition~\ref{DEF:recognizableWord}.
 	Then there are $i', j' \in \NNN_0$ such that
 	$i = qi'+p$, $j=qj'+p$ (see an illustration in Tables~\ref{table:proof3.4a} and 			\ref{table:proof3.4b}; there, the vertical bars denote the cutting bars from $E_1=q\NNN_0$,
 	see \eqref{E1}).

\begin{table}[h!!!]
	\begin{center}
		\caption{The first cutting bars of $aw$ and $\bar{a}w$ for $p \geq 1$}
		\label{table:proof3.4a}
		\begin{tabular}{l c c c c c c c c }
			beginning of $aw$ & $a$ & $w_0$ & $w_1$ & $\dots$ & $w_{q-p-1}$ & $|$ &
			 $w_{q-p}$ & $\dots$    \\
			\footnotesize{indexes} & \footnotesize{$i-1$} & \footnotesize{$i$} & \footnotesize{$i+1$} & \footnotesize{$\dots$} & \footnotesize{$i+q-p-1$} &  &
			\footnotesize{$i+q-p = q(i'+1)$} & \footnotesize{$\dots$}
			\\
			beginning of $\bar{a}w$ & $\bar{a}$ & $w_0$ & $w_1$ & $\dots$ & $w_{q-p-1}$ & $|$ &
			$w_{q-p}$ & $\dots$    \\
			\footnotesize{indexes} & \footnotesize{$j-1$} & \footnotesize{$j$} & \footnotesize{$j+1$} & \footnotesize{$\dots$} & \footnotesize{$j+q-p-1$} &  &
			\footnotesize{$j+q-p = q(j'+1)$} & \footnotesize{$\dots$}\\
			\end{tabular}
\end{center}
\end{table}

\begin{table}[h!!!]
	\begin{center}
		\caption{The first cutting bars of $aw$ and $\bar{a}w$ for $p = 0$}
			\label{table:proof3.4b}
			\begin{tabular}{l c c c c c  }
			beginning of $aw$ & $a$ & $|$ & $w_0$ & $w_1$ & $\dots$ \\
			\footnotesize{indexes} & \footnotesize{$i-1$} & & \footnotesize{$i = i'q$} & \footnotesize{$i+1$} & \footnotesize{$\dots$}\\
			beginning of $\bar{a}w$ & $\bar{a}$ & $|$ & $w_0$ & $w_1$ & $\dots$  \\
			\footnotesize{indexes} & \footnotesize{$j-1$} & & \footnotesize{$j = j'q$} & \footnotesize{$j+1$} & \footnotesize{$\dots$}\\
		\end{tabular}
	\end{center}
\end{table}
	
Note that $a = x_{i-1}$, $\bar{a} = x_{j-1}$.
If $p=0$, then $\zeta(0)$ ends with $a$ and $\zeta(1)$ ends with $\bar{a}$, or vice versa;
so $\beta = 0$.
Otherwise, $\zeta(0)$ ends with $a\word{w}{0}{q-p}$
and $\zeta(1)$ ends with $\bar{a}\word{w}{0}{q-p}$, or vice versa; so $\beta = q-p$.
In both cases, $i, j \in q \NNN - \beta$.
We showed that $\indexoflines{w}{a}{b} \subseteq q (\NNN \times \NNN) - \beta$.

It remains to prove that $\ell \in q \NNN + (\alpha + \beta)$.
By recognizability of $w$,
there are integers $r \in [0, q)$ and $i'', j'' \in \NNN_0$ such that
$i + \ell = qi''+r$, $j+\ell = qj''+r$ 
(see an illustration in Tables~\ref{table:proof3.4c}
and \ref{table:proof3.4d}).
As above, using that $b = x_{i + \ell}$ and $\bar{b} = x_{j+\ell}$,
we obtain that $\alpha = r$.
Hence,
$$ 
	\ell = 
	\begin{cases}
		q(i''-i'-1) + \alpha + \beta &\text{  if } p \geq 1, \\
		q(i''-i') + \alpha + \beta &\text{  if } p = 0.
	\end{cases}
$$
Since $\ell \geq \recog > \alpha + \beta$ by Definition~\ref{DEF:alpha-and-beta}, the proof is finished.

\begin{table}[h!!!]
	\begin{center}
		\caption{The last cutting  bars of $wb$ and $w\bar{b}$
			for $r \geq 1$}
		\label{table:proof3.4c}
		\begin{tabular}{l c c c c c c c c c}
			end of $wb$ & $\dots$ & $w_{\ell-r-1}$ & $|$ & $w_{\ell-r}$ & $\dots$ & $w_{\ell-2}$ & $w_{\ell-1}$ & $b$ &\\
			\footnotesize{indexes} & \footnotesize{$\dots$} & \footnotesize{$i+\ell-r-1$} & & \footnotesize{$i+\ell-r = qi''$} & \footnotesize{$\dots$} & \footnotesize{$i+\ell-2$} & \footnotesize{$i+\ell-1$} & \footnotesize{$i+\ell$} &\\
			end of $w\bar{b}$ & $\dots$ & $w_{\ell-r-1}$ & $|$ & $w_{\ell-r}$ & $\dots$ & $w_{\ell-2}$ & $w_{\ell-1}$ & $\bar{b}$ &\\
			\footnotesize{indexes} & \footnotesize{$\dots$} & \footnotesize{$j+\ell-r-1$} & & \footnotesize{$j+\ell-r = qj''$} & \footnotesize{$\dots$} & \footnotesize{$j+\ell-2$} & \footnotesize{$j+\ell-1$} & \footnotesize{$j+\ell$} &\\
		\end{tabular}
	\end{center}
\end{table}

\begin{table}[h!!!]
	\begin{center}
		\caption{The last cutting  bars of $wb$ and $w\bar{b}$
			for $r=0$}
		\label{table:proof3.4d}
		\begin{tabular}{l c c c c } 
			end of $wb$ & $\dots$ & $w_{\ell-1}$ & $|$ & $b$   \\
			\footnotesize{indexes} & \footnotesize{$\dots$} & \footnotesize{$i+\ell-1$} &   & \footnotesize{$i+\ell = qi''$ }  \\
			end of $w\bar{b}$ & $\dots$ & $w_{\ell-1}$ & $|$ & $\bar{b}$   \\
			\footnotesize{indexes} & \footnotesize{$\dots$} & \footnotesize{$j+\ell-1$} &   & \footnotesize{$j+\ell = qj''$}  \\
		\end{tabular}
	\end{center}
\end{table}	
\end{proof}

\begin{lemma}\label{LMM:doplnenie-na-desubstituciu}
	Let $\linepattern{a}{w}{b}$ be a $\recog$-recognizable inner line-pattern; put $\ell=\abs{w}$.
	Then there exist a unique word $v$ and unique letters $c,d$ such that
	$$awb \ \text{ is a subword of }\ \zeta(cvd),$$
	where $\abs{v} = \big({\ell-(\alpha+\beta)}\big)/{q}$.
	Moreover, 
	\begin{equation*}\label{only-indices-in-cor-3.5}
		\{ i \in \NNN\colon \word{x}{i-1}{i+\ell+1} = awb\} 
		\quad\subseteq\quad
		q \{ i' \in \NNN \colon \word{x}{i'-1}{i'+\abs{v}+1} = cvd\} - \beta ,
	\end{equation*}
	and
	\begin{equation}\label{words-in-cor-3.5}
		\begin{split}
		 \word{\zeta(c)}{q-\beta-1}{q} &= a \word{w}{0}{\beta};
		\\
		\zeta(v) &= \word{w}{\beta}{\ell - \alpha};
		\\
		\word{\zeta(d)}{0}{\alpha+1} &= \word{w}{\ell-\alpha}{\ell}b.
		\end{split}
	\end{equation}
\end{lemma}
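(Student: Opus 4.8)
\textbf{Proof plan for Lemma~\ref{LMM:doplnenie-na-desubstituciu}.}
The plan is to reduce everything to the recognizability of $w$ and to the explicit description of the cutting bars already obtained in the proof of Lemma~\ref{LMM:possible-lengths}. First I would fix $P=\linepattern{a}{w}{b}$ and a pair $(i,j)\in\indexoflines{w}{a}{b}$, and recall from Lemma~\ref{LMM:possible-lengths} that there is a unique $p=p_w\in[0,q)$ with $i,j\in q\NNN_0+p$, that in fact $p=q-\beta$ (with the convention $p=0$ when $\beta=0$), and that similarly $i+\ell,j+\ell\in q\NNN_0+\alpha$. Writing $i=qi'+p$, the word $aw$ starts at index $i-1=qi'+p-1$; since $p-1\in[-1,q)$, the letter $a=x_{i-1}$ together with the first $\beta=q-p$ letters of $w$ fill out exactly the last $\beta+1$ positions of the block $\word{x}{qi'}{q(i'+1)}=\zeta(x_{i'})$ (or, when $p=0$, $a$ is the last letter of $\zeta(x_{i'-1})$ and $\beta=0$). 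Set $c=x_{i'-1}$ when $p\ge 1$, or $c=x_{i''}$ with the appropriate index when $p=0$; in either case Remark~\ref{REMARK:unique-zeta-at-indices-of-type-qi} makes $c$ the \emph{unique} letter whose image $\zeta(c)$ ends with $a\word{w}{0}{\beta}$, which is the first line of \eqref{words-in-cor-3.5}. Symmetrically, using $i+\ell=qi''+\alpha$, the last $\alpha$ letters of $w$ together with $b=x_{i+\ell}$ form a prefix of $\zeta(x_{i''})$, and $d:=x_{i''}$ is the unique letter with $\word{\zeta(d)}{0}{\alpha+1}=\word{w}{\ell-\alpha}{\ell}b$, the third line of \eqref{words-in-cor-3.5}.

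Next I would identify the middle block. Between the cutting bar just after $a\word{w}{0}{\beta}$ (at absolute index $q(i'+1)$) and the cutting bar just before $\word{w}{\ell-\alpha}{\ell}b$ (at absolute index $qi''$), the word $\word{w}{\beta}{\ell-\alpha}$ is a concatenation of full $\zeta$-images of consecutive letters of $x$; by Remark~\ref{REMARK:unique-zeta-at-indices-of-type-qi} each of these letters is uniquely determined, so there is a unique word $v=x_{[i'+1,\,i'')}$ (when $p\ge1$; the $p=0$ case is analogous with a shift by one in the indexing) with $\zeta(v)=\word{w}{\beta}{\ell-\alpha}$, the second line of \eqref{words-in-cor-3.5}. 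Counting lengths, $q\abs{v}=\ell-\alpha-\beta$, so $\abs{v}=(\ell-(\alpha+\beta))/q$ as claimed; note $\abs{v}\ge 1$ because $\ell\ge\recog>\alpha+\beta$. Concatenating the three pieces, $\word{\zeta(c)}{q-\beta-1}{q}\cdot\zeta(v)\cdot\word{\zeta(d)}{0}{\alpha+1}=a\word{w}{0}{\beta}\cdot\word{w}{\beta}{\ell-\alpha}\cdot\word{w}{\ell-\alpha}{\ell}b=awb$, which exhibits $awb$ as a subword of $\zeta(c)\zeta(v)\zeta(d)=\zeta(cvd)$. Uniqueness of the triple $(c,v,d)$ follows because each of its components was pinned down by Remark~\ref{REMARK:unique-zeta-at-indices-of-type-qi} from the positions of the cutting bars, and those positions are forced by recognizability of $w$ (they do not depend on the choice of $(i,j)$, only on $p_w$).

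Finally, for the displayed inclusion of index sets, I would take any $i\in\NNN$ with $\word{x}{i-1}{i+\ell+1}=awb$; applying the above with the relevant pair shows $i\in q\NNN+\beta$, say $i=qi'+q-\beta$ (for $\beta\ge 1$; the $\beta=0$ case is the same with $i=qi'$), and the decomposition of $x$ as $\zeta(x)$ at the cutting bars bracketing $awb$ forces $\word{x}{i'-1}{i'+\abs{v}+1}=cvd$ with the same $c,v,d$ found above. Hence $i\in q\{i'\in\NNN:\word{x}{i'-1}{i'+\abs{v}+1}=cvd\}-\beta$, which is the asserted inclusion.

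The step I expect to be the main obstacle is the bookkeeping around $p=0$ versus $p\ge1$: when $p=0$ the letter $a$ belongs to the \emph{previous} $\zeta$-block rather than to the block starting at $i$, so the index of the letter $c$ shifts by one and one must check that $\beta=0$ forces the first line of \eqref{words-in-cor-3.5} to degenerate to $\zeta(c)_{q-1}=a$ consistently; the symmetric subtlety occurs at the right end with $r=\alpha$ versus $r=0$. All of this is already implicit in Tables~\ref{table:proof3.4a}--\ref{table:proof3.4d}, so the argument is a careful transcription of that case analysis rather than anything genuinely new.
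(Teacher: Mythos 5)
Your proposal is correct and follows essentially the same route as the paper: it uses Lemma~\ref{LMM:possible-lengths} to pin down the cutting bars bracketing $awb$, then invokes Remark~\ref{REMARK:unique-zeta-at-indices-of-type-qi} to extract the unique $c$, $v$, $d$ from the three resulting blocks, with the same observation that these depend only on $a$, $w$, $b$ and not on the particular occurrence. The case bookkeeping for $p=0$ versus $p\ge 1$ that you flag as the main obstacle is handled in the paper exactly as you anticipate.
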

	
Notice that \eqref{words-in-cor-3.5} is equivalent to \eqref{EQ:inducedILP}.
	
\begin{proof}
	By Lemma~\ref{LMM:possible-lengths},
	$\ell = q\ell' + \alpha + \beta$ for some $\ell' \in \NNN$.
	Let $(i, j) \in \indexoflines{w}{a}{b}$.
	That is, $awb$ starts at the index $i-1$ and $\bar{a}w\bar{b}$ starts at the index $j-1$.
	Moreover, by Lemma~\ref{LMM:possible-lengths},
	$i = qi'-\beta$ and $j = qj'-\beta$ for some $i', j' \in \NNN$.

	This means that $\word{x}{qi'-\beta-1}{qi'} = a \word{w}{0}{\beta}$. 
	By Remark~\ref{REMARK:unique-zeta-at-indices-of-type-qi}, there is unique
	$c \in \AAa$ such that $\word{x}{q(i'-1)}{qi'} = \zeta(c)$.
	Notice that $c$ depends only on $a$ and does not depend on $i'$;
	indeed, the definition of $\beta$ and the fact that $x_{qi'-\beta-1} = a$ uniquely determine whether $c = 0$ or $c=1$, independently on $i'$.
	
	Since $\ell - \alpha - \beta = \ell' q$,
	we have $\word{x}{qi'}{qi'+q\ell'}= \word{w}{\beta}{\ell - \alpha}$.
	By Remark~\ref{REMARK:unique-zeta-at-indices-of-type-qi}, there is a unique word
	$v = v_0 v_1 \dots v_{\ell'-1}$ such that $\zeta(v) = \word{w}{\beta}{\ell - \alpha}$.
	Again, $v$ depends only on $w$ and not on $i'$.
		
	Analogously, from the definition of $\alpha$ and the fact that 
	$\word{x}{qi'+q\ell'}{qi'+q\ell'+\alpha+1}= \word{w}{\ell-\alpha}{\ell}b$, 
	we obtain a unique
	$d \in \AAa$ such that $\word{x}{qi'+q\ell'}{q(i'+1)+q\ell'} = \zeta(d)$.
	Here, $x_{qi'+q\ell'+\alpha+1} = b$ determines whether $d=0$ or $d=1$,
	independently on $i'$ and $\ell'$.
	
	We have showed \eqref{words-in-cor-3.5} and
	$$\{ i \in \NNN\colon \word{x}{i-1}{i+\ell+1} = awb\} \subseteq
	q \{ i' \in \NNN\colon \word{x}{i'-1}{i'+\ell'+1} = cvd\} - \beta,$$
	so the proof is finished.
\end{proof}

\begin{proof}[Proof of Proposition \ref{P:desubstitution-of-line-patterns}]
	Let $P = \linepattern{a}{w}{b}$; put $\ell = \abs{w}$. 
	Let $(i, j) \in \indexoflines{w}{a}{b}$; that is,
	$awb$ starts at the index $i-1$ and $\bar{a}w\bar{b}$ starts at the index $j-1$.

	By Lemma~\ref{LMM:doplnenie-na-desubstituciu} applied to $\linepattern{a}{w}{b}$, we have a unique word $cvd$ (not depending on $i$) such that
	$	\word{\zeta(c)}{q-\beta-1}{q} = a \word{w}{0}{\beta},
	\zeta(v) = \word{w}{\beta}{\ell - \alpha}$ and 
	$\word{\zeta(d)}{0}{\alpha+1} = \word{w}{\ell-\alpha}{\ell}b$.
	Moreover, $\abs{v} = (\ell-(\alpha+\beta))/q$ and $cvd$ starts at $i'-1$, where $i' = (i+\beta)/q$.
	
	Analogously, 	by Lemma~\ref{LMM:doplnenie-na-desubstituciu} applied to $\linepattern{\bar{a}}{w}{\bar{b}}$, we have a unique $euf$ such that
	$	\word{\zeta(e)}{q-\beta-1}{q} = \bar{a} \word{w}{0}{\beta},
	\zeta(u) = \word{w}{\beta}{\ell - \alpha}$ and 
	$\word{\zeta(f)}{0}{\alpha+1} = \word{w}{\ell-\alpha}{\ell}\bar{b}$; moreover
	$\abs{u} = (\ell-(\alpha+\beta))/q$ and $euf$ starts at $j'-1$, where $j' = (j+\beta)/q$.
	
	Trivially, $u=v$ by \eqref{assumpt-one-to-one} and \eqref{words-in-cor-3.5}.
	Since $\zeta(c)_{q-\beta-1} = a$ and  $\zeta(e)_{q-\beta-1} = \bar{a}$, we have $c \neq e$ and so $e = \bar{c}$.
	Similarly, $\zeta(d)_{\alpha} = b$ and $\zeta(f)_{\alpha} = \bar{b}$, so $d \neq f$ and thus $f = \bar{d}$.
	
	We have proved that $\linepattern{c}{v}{d}$ is an inner line-pattern and
	that $(i', j') \in \indexoflines{v}{c}{d}$.
	This shows that  $\indexoflines{w}{a}{b} \subseteq q  \indexoflines{v}{c}{d} -\beta$.
	Since the converse inclusion follows from Proposition~\ref{P:substitution-of-line-patterns},
	$\abs{w} = q\abs{v}+\alpha+\beta$ is trivial and uniqueness of $\linepattern{c}{v}{d}$
	follows from Lemma~\ref{LMM:doplnenie-na-desubstituciu},
	the proof is finished.
\end{proof}

Repeated use of Proposition~\ref{P:desubstitution-of-line-patterns} gives the following corollary.

\begin{corollary}\label{C:repeat-inducing}
Let $P$ be a $\recog$-recognizable inner line-pattern. Then there exist a unique integer
$k=k(P)\ge 1$ (called the \emph{order} of $P$) and unique inner line-patterns $P_0,\dots,P_k$ such that
\begin{enumerate}[label=(\alph*)]
	\item $P_k=P$;
	\item $P_i$ is $\recog$-recognizable and is induced by $P_{i-1}$ for every $i>0$;
	\item $P_0$ is not $\recog$-recognizable.
\end{enumerate}
Moreover, $k$ and $\abs{P_0}$ depends only on the length $\abs{P}$ of $P$,
\begin{equation} \label{EQ:repeated-usage}
	\iii(P) = q^k \iii(P_0) - \beta\frac{q^k-1}{q-1}
	\quad\text{and}\quad
	\abs{P} = q^k\abs{P_0} + (\alpha+\beta) \frac{q^k-1}{q-1}\,.
\end{equation}
\end{corollary}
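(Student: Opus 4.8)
The plan is simply to iterate Proposition~\ref{P:desubstitution-of-line-patterns}. Put $P_k := P$ (with $k$ to be specified). As long as the current line-pattern is $\recog$-recognizable, Proposition~\ref{P:desubstitution-of-line-patterns} supplies a unique inner line-pattern by which it is induced; calling these successive predecessors $P_{k-1}, P_{k-2}, \dots$, I obtain a descending chain. The crucial point is that each step strictly decreases the length: if an inner line-pattern $P'$ is induced by $Q'$, then $\abs{Q'} = (\abs{P'} - (\alpha+\beta))/q \le \abs{P'}/q < \abs{P'}$ since $q \ge 2$. As the lengths are positive integers, the chain must terminate, and by construction it terminates exactly at a line-pattern whose length is smaller than $\recog$, i.e.\ one that is not $\recog$-recognizable; call it $P_0$, and let $k \ge 1$ be the number of desubstitution steps taken (at least one, because $P = P_k$ is $\recog$-recognizable). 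This produces $P_0, \dots, P_k$ satisfying (a)--(c). Uniqueness of the entire chain is inherited from the uniqueness clause of Proposition~\ref{P:desubstitution-of-line-patterns} at each step: any chain as in (a)--(c) must, step by step from the top, coincide with the one just built.

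To see that $k$ and $\abs{P_0}$ depend on $P$ only through $\abs{P}$, note that Proposition~\ref{P:desubstitution-of-line-patterns} gives $\abs{Q'} = (\abs{P'} - (\alpha+\beta))/q$, so the length of the predecessor is a function of the length alone (and is a positive integer whenever $\abs{P'} \ge \recog$, by Lemma~\ref{LMM:possible-lengths}). Hence the whole length sequence $\abs{P_k} > \abs{P_{k-1}} > \dots > \abs{P_0}$, and therefore the number of steps $k$ and the terminal length $\abs{P_0}$, are determined by $\abs{P}$.

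Finally, the displayed identities in \eqref{EQ:repeated-usage} follow by induction on $k$ from the single-step formulas $\iii(P_i) = q\,\iii(P_{i-1}) - \beta$ and $\abs{P_i} = q\abs{P_{i-1}} + (\alpha+\beta)$ for $1 \le i \le k$ (Proposition~\ref{P:desubstitution-of-line-patterns}), after summing the geometric series $1 + q + \dots + q^{k-1} = (q^k-1)/(q-1)$. None of this is delicate; the one place that deserves a line of care is the termination argument --- verifying that the lengths genuinely decrease, that the process halts precisely when $\recog$-recognizability fails, and that $k \ge 1$ --- after which the statement is a mechanical unwinding of Proposition~\ref{P:desubstitution-of-line-patterns}.
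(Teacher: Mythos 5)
Your proposal is correct and follows exactly the route the paper intends: the paper dispenses with the proof by the single remark that the corollary follows from ``repeated use of Proposition~\ref{P:desubstitution-of-line-patterns}'', and your argument is precisely that iteration, with the termination, uniqueness, and geometric-series details (all of which check out, in particular the use of Lemma~\ref{LMM:possible-lengths} to see that the predecessor length is a positive integer and the observation that $\recog$-recognizability of a line-pattern is purely a length condition) written out.
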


\begin{lemma}\label{LMM:diff-line-patterns-induces-different-line-patterns}
	Let $P', Q'$ and $P, Q$ be inner line-patterns.
	Assume that $P'$ induces $P$ and $Q'$ induces $Q$.
	Then $P = Q$ if and only if $P' = Q'$.
\end{lemma}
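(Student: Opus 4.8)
The plan is to prove the statement by unwinding the definition of ``$P'$ induces $P$'' and reducing to the fact that $\zeta$ is injective together with the fact that the separators of the induced pattern depend only on the separators of the inducing one (Lemma~\ref{LMM:on-induced-separators}). Write $P' = \linepattern{c}{v}{d}$, $P = \linepattern{a}{w}{b}$, $Q' = \linepattern{c'}{v'}{d'}$ and $Q = \linepattern{a'}{w'}{b'}$. By Definition~\ref{DEF:induced}, the word $w$ is determined by \eqref{EQ:inducedILP}, and likewise $w'$ by the analogous identities with primes; in particular $\zeta(v) = \word{w}{\beta}{\abs{w}-\alpha}$ and $\zeta(v') = \word{w'}{\beta}{\abs{w'}-\alpha}$, and similarly $a\word{w}{0}{\beta} = \word{\zeta(c)}{q-\beta-1}{q}$, $\word{w}{\abs{w}-\alpha}{\abs{w}}b = \word{\zeta(d)}{0}{\alpha+1}$, and the primed versions.

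The ``if'' direction is immediate: if $P' = Q'$, then $c=c'$, $v=v'$, $d=d'$, and the formulas in \eqref{EQ:inducedILP} give $a=a'$, $w=w'$, $b=b'$, so $P = Q$. For the ``only if'' direction, assume $P = Q$, i.e.\ $a=a'$, $w=w'$, $b=b'$. First I would recover $v$: from $\zeta(v) = \word{w}{\beta}{\abs{w}-\alpha} = \word{w'}{\beta}{\abs{w'}-\alpha} = \zeta(v')$ and the injectivity of $\zeta$ on $\AAa^*$ (which follows from \eqref{assumpt-one-to-one}, since $\zeta$ is a non-erasing morphism that is one-to-one on the alphabet; more concretely, each block of $q$ consecutive letters of $\zeta(v)$ is $\zeta$ of a unique letter by Remark~\ref{REMARK:unique-zeta-at-indices-of-type-qi}-type reasoning), we get $v = v'$. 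Next I would recover $c$ and $d$: from $a = \zeta(c)_{q-\beta-1}$ and $a = a' = \zeta(c')_{q-\beta-1}$, together with the fact that $\zeta(0)_{q-\beta-1} \ne \zeta(1)_{q-\beta-1}$ (Definition~\ref{DEF:alpha-and-beta}(b)), the letter $a$ determines $c$ uniquely, hence $c = c'$; symmetrically, $b = \zeta(d)_\alpha = \zeta(d')_\alpha$ and $\zeta(0)_\alpha \ne \zeta(1)_\alpha$ force $d = d'$. Therefore $P' = \linepattern{c}{v}{d} = \linepattern{c'}{v'}{d'} = Q'$.

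I do not expect any serious obstacle here; the statement is essentially the injectivity of the ``substitution of line-patterns'' operation $Q' \mapsto P$, and all the needed uniqueness facts have already been isolated. The only point requiring a word of care is the recovery of $v$ from $\zeta(v)$, i.e.\ that $\zeta$ is injective as a map on words; this is where I would invoke that $\zeta$ has constant length $q$ and is one-to-one on $\AAa$, so the factorization of $\zeta(v)$ into length-$q$ blocks is forced and each block identifies one letter of $v$. Everything else is bookkeeping with the formulas in \eqref{EQ:inducedILP} and the defining inequalities for $\alpha$ and $\beta$.
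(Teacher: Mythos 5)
Your proof is correct and follows essentially the same route as the paper: both directions are handled by unwinding \eqref{EQ:inducedILP}, recovering $v$ from $\zeta(v)$ via injectivity of the constant-length substitution on words, and recovering the separators from the defining inequalities for $\alpha$ and $\beta$. The paper's proof is just a terser version of the same argument.
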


\begin{proof}
	If $P' = Q'$, then $P=Q$ by Definition~\ref{DEF:induced}.
	On the other hand, let $P = Q = \linepattern{a}{w}{b}$.
	Assume that $P' =\linepattern{c}{v}{d}$, $Q' = \linepattern{e}{u}{f}$  both induces $P = Q$.
	By Definition~\ref{DEF:induced} and \eqref{assumpt-one-to-one}, $e=c, f=d$ and $v=u$, hence $P' = Q'$.
\end{proof}

Corollary~\ref{C:repeat-inducing} and Lemmas~\ref{LMM:diff-line-patterns-induces-different-line-patterns} and \ref{LMM:I(P)-disjoint} immediately imply the following.

\begin{lemma}\label{LMM:disjoint-snakes}
	Let $P$ and $Q$ be different $\recog$-recognizable inner line-patterns of equal lengths $\ell$.
	Let $k$ be the order of both $P$ and $Q$, and $P_i, Q_i$ $(0 \leq i \leq k)$
	be the inner line-patterns from Corollary~\ref{C:repeat-inducing} applied to $P, Q$,
	respectively. Then
	$$ 
		P_i \neq Q_i \quad \text{and} \quad \iii(P_i) \cap \iii(Q_i) = \emptyset 
	$$
	for every $ 0 \leq i \leq k$.
\end{lemma}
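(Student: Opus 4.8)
The plan is to deduce this directly from the three results cited just before the statement, essentially by a downward induction on the index $i$. First I would record the (trivial but necessary) observation that $P$ and $Q$ have the same order: by Corollary~\ref{C:repeat-inducing} the order $k(P)$ depends only on $\abs{P}$, and $\abs{P}=\abs{Q}=\ell$ by hypothesis, so $k(P)=k(Q)=:k$ and the two desubstitution chains $P_0,\dots,P_k$ and $Q_0,\dots,Q_k$ genuinely have the same number of terms. This is the only place the equal-lengths assumption is used.

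For the top of the chain the claim is immediate, since $P_k=P\neq Q=Q_k$ and so $\iii(P_k)\cap\iii(Q_k)=\emptyset$ by Lemma~\ref{LMM:I(P)-disjoint}. I would then argue by descending induction on $i$: assume $P_i\neq Q_i$ for some $i$ with $1\le i\le k$. By property~(b) of Corollary~\ref{C:repeat-inducing}, $P_{i-1}$ induces $P_i$ and $Q_{i-1}$ induces $Q_i$, so Lemma~\ref{LMM:diff-line-patterns-induces-different-line-patterns}, read in the contrapositive form ``if $P_{i-1}=Q_{i-1}$ then $P_i=Q_i$'', forces $P_{i-1}\neq Q_{i-1}$. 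Hence $P_i\neq Q_i$ for every $0\le i\le k$.

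Finally, for each such $i$ the line-patterns $P_i$ and $Q_i$ are distinct inner line-patterns, so Lemma~\ref{LMM:I(P)-disjoint} yields $\iii(P_i)\cap\iii(Q_i)=\emptyset$, completing the proof. There is no real obstacle here: the argument is routine bookkeeping, and the only point requiring a word of care is ensuring that the two chains have matching lengths, which is precisely what the hypothesis $\abs{P}=\abs{Q}$ provides via Corollary~\ref{C:repeat-inducing}.
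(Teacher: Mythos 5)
Your proof is correct and follows exactly the route the paper intends: the paper gives no written proof, stating only that Corollary~\ref{C:repeat-inducing} and Lemmas~\ref{LMM:diff-line-patterns-induces-different-line-patterns} and \ref{LMM:I(P)-disjoint} immediately imply the result, and your descending induction is precisely the deduction being alluded to.
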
 

Put
\begin{equation}\label{EQ:Kell}
	\kkk_\ell
	=
    \{ (i,j) \in \NNN^2\colon 
    	(i,j,\ell) 
		\text{ is an inner line in } \recplot(\infty, \eps_0) 
	\}.
\end{equation}

\begin{corollary}\label{COR:lenghts}
	Let $\ell \in \NNN$ be such that $\kkk_\ell \neq \emptyset$.
	Then there are $r \in \NNN$ and inner line-patterns $P^1, \dots, P^r$ of length $\ell$
	such that
	\begin{equation}\label{EQ:K_ell id disjoint union}
		\kkk_\ell = \bigsqcup_{j=1}^r   \iii (P^j).
	\end{equation}
	Further, if $\ell \geq \recog$, $k$ is the order of every $P^j$, 
	and $P_i^j$ $(1 \leq j \leq r, \ 0 \leq i \le k )$ are inner line-patterns from Corollary~\ref{C:repeat-inducing} 
	applied to $P^j$, then
	$$ 
		\kkk_\ell = q^k \kkk_{\ell_0} - \beta \frac{q^k-1}{q-1},
		\qquad
		\kkk_{\ell_0} = \bigsqcup_{j=1}^{r} \iii (P_0^j)
	$$ 
	and
	$$ 
		\ell = q^k \ell_0 + (\alpha + \beta)\frac{q^k-1}{q-1},
	$$
	where $\ell_0$ is the length of (every) $P_0^j$.
\end{corollary}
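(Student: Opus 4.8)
The plan is to establish the two assertions separately: the first is immediate from Proposition~\ref{PROP:lines-and-patterns}, and the second (the case $\ell\ge\recog$) reduces, via Corollary~\ref{C:repeat-inducing} and Proposition~\ref{P:desubstitution-of-line-patterns}, to the single identity $\kkk_{\ell_0}=\bigsqcup_{j=1}^r\iii(P^j_0)$.

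\emph{Decomposition of $\kkk_\ell$.} By Proposition~\ref{PROP:lines-and-patterns}, a pair $(i,j)$ lies in $\kkk_\ell$ exactly when $(i,j)\in\iii(P)$ for a (necessarily unique) inner line-pattern $P$ with $\abs{P}=\ell$; hence $\kkk_\ell$ is the union of the sets $\iii(P)$ over all inner line-patterns $P$ of length $\ell$. There are only finitely many of these, since a line-pattern of length $\ell$ is a triple $\linepattern{a}{w}{b}$ with $a,b\in\AAa$ and $\abs{w}=\ell$; the corresponding sets $\iii(P)$ are pairwise disjoint by Lemma~\ref{LMM:I(P)-disjoint}, and at least one such $P$ occurs because $\kkk_\ell\ne\emptyset$. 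Listing them as $P^1,\dots,P^r$ with $r\ge 1$ gives \eqref{EQ:K_ell id disjoint union}.

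\emph{The case $\ell\ge\recog$.} Now every $P^j$ is $\recog$-recognizable, so Corollary~\ref{C:repeat-inducing} applies to each; since the $P^j$ all have length $\ell$, they share a common order $k$ and a common base length $\ell_0=\abs{P^j_0}$, and $\ell_0<\recog$ because $P^j_0$ is not $\recog$-recognizable. Formula \eqref{EQ:repeated-usage} yields at once the length relation $\ell=q^k\ell_0+(\alpha+\beta)\frac{q^k-1}{q-1}$ together with $\iii(P^j)=q^k\iii(P^j_0)-\beta\frac{q^k-1}{q-1}$ for every $j$, so taking the disjoint union over $j$ turns the remaining part of the statement into
$$
	\bigsqcup_{j=1}^r\iii(P^j_0)=\kkk_{\ell_0},
$$
whose left-hand side is a disjoint union by Lemma~\ref{LMM:disjoint-snakes}. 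The inclusion $\subseteq$ is immediate, each $P^j_0$ being an inner line-pattern of length $\ell_0$.

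\emph{The reverse inclusion} $\kkk_{\ell_0}\subseteq\bigcup_{j=1}^r\iii(P^j_0)$ is the step I expect to be the main obstacle. Given $(i_0,j_0)\in\kkk_{\ell_0}$, Proposition~\ref{PROP:lines-and-patterns} supplies an inner line-pattern $Q$ of length $\ell_0$ with $(i_0,j_0)\in\iii(Q)$, and $Q$ is not $\recog$-recognizable since $\ell_0<\recog$. Iterating Proposition~\ref{P:substitution-of-line-patterns}, I would build inner line-patterns $Q=Q^{(0)},Q^{(1)},\dots,Q^{(k)}$ with $Q^{(i)}$ induced by $Q^{(i-1)}$; by Proposition~\ref{P:substitution-of-line-patterns}\ref{P:substitution-of-line-patterns-a} their lengths are $q^i\ell_0+(\alpha+\beta)\frac{q^i-1}{q-1}$, so $\abs{Q^{(k)}}=\ell$, and $Q^{(k)}$ is thus $\recog$-recognizable of order $k$. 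The crux is that $Q^{(0)},\dots,Q^{(k)}$ is exactly the desubstitution chain of $Q^{(k)}$ given by Corollary~\ref{C:repeat-inducing}; I would verify this by downward induction on $i$, showing that the chain member $(Q^{(k)})_i$ equals $Q^{(i)}$ and, for $i\ge 1$, is $\recog$-recognizable, the inductive step combining the uniqueness clause of Proposition~\ref{P:desubstitution-of-line-patterns} (to pass from $(Q^{(k)})_i=Q^{(i)}$ to $(Q^{(k)})_{i-1}=Q^{(i-1)}$) with the fact that every chain member of index $\ge 1$ is $\recog$-recognizable by the definition of the chain. In particular $(Q^{(k)})_0=Q$; but $Q^{(k)}$ is an inner line-pattern of length $\ell$, hence $Q^{(k)}=P^j$ for some $j$ by the first part, and uniqueness of the chain forces $P^j_0=Q$. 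Therefore $(i_0,j_0)\in\iii(Q)=\iii(P^j_0)$. With both inclusions in hand, $\kkk_{\ell_0}=\bigsqcup_{j=1}^r\iii(P^j_0)$, and substituting this into $\kkk_\ell=q^k\bigl(\bigcup_{j=1}^r\iii(P^j_0)\bigr)-\beta\frac{q^k-1}{q-1}$ gives $\kkk_\ell=q^k\kkk_{\ell_0}-\beta\frac{q^k-1}{q-1}$, as required.
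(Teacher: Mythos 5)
Your proposal is correct and follows the same route as the paper, which simply cites Proposition~\ref{PROP:lines-and-patterns} and Lemma~\ref{LMM:I(P)-disjoint} for the decomposition and then says the rest follows from Corollary~\ref{C:repeat-inducing} and Lemma~\ref{LMM:disjoint-snakes}. The only difference is that you spell out the reverse inclusion $\kkk_{\ell_0}\subseteq\bigcup_j\iii(P^j_0)$ (building the induced chain upward from an arbitrary $Q$ of length $\ell_0$ and matching it, via the uniqueness in Proposition~\ref{P:desubstitution-of-line-patterns}, with the desubstitution chain of some $P^j$), a step the paper leaves implicit; your treatment of it is correct.
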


\begin{proof}
	Let $P^1, \dots, P^r $ denote all inner line-patterns of length $\ell$.
	By Proposition~\ref{PROP:lines-and-patterns} and the assumption that $\kkk_\ell$ is nonempty,
	$r \geq 1$ and $\kkk_\ell$ is the union of all $\iii(P^j)$; 
	further, the union is disjoint by Lemma~\ref{LMM:I(P)-disjoint}.
	The rest follows from Corollary~\ref{C:repeat-inducing} and
	Lemma~\ref{LMM:disjoint-snakes}.
\end{proof}

\subsection{Examples}\label{S:examples}

In this subsection, we provide examples of line-patterns and induced line-patterns. Additionally, we present an example that highlights the significant role of recognizability in Lemma~\ref{LMM:doplnenie-na-desubstituciu} and demonstrates that the inclusion in Proposition~\ref{P:substitution-of-line-patterns}(b) cannot be replaced with equality.

\begin{example}[Period-doubling substitution]
	Let $\zeta$ be the period-doubling substitution, that is $\zeta(0) = 01$ and $\zeta(1) = 00$.
	Hence, $q=2$, $\alpha =1$, $\beta = 0$ and $\recog=3$, see Example~\ref{EXAM:def-recog};
	$\zeta$ is of type $(\textendash \, \textendash)$.
	Inner line-patterns of length $1$ and $2$ are  $\linepattern{0}{0}{0}$ and $\linepattern{0}{00}{1}$
	and their mirror images.
	Every longer inner line-pattern is induced by them, as is also described in \cite{vspitalsky2018recurrence}.
\end{example}

\begin{example}[Thue-Morse substitution] \label{example:thue-morse}
	Let $\zeta$ be the Thue-Morse substitution, that is $\zeta(0) = 01$ and $\zeta(1) = 10$.
	Hence, $q=2$, $\alpha = \beta = 0$ and $\recog=4$, see Example~\ref{EXAM:def-recog}.
	Further, $\zeta$ is of type $(\textendash \, +)$ and
	$$ 
		x= 0110\,\,1001\,\,1001\,\,0110\,\,1001\,\,0110\,\,0110\,\,1001\,\dots.
	$$
	Inner lines of length $1$ are determined by patterns $\linepattern{0}{0}{1}$, $\linepattern{0}{1}{1}$ and their mirror images.
	Inner lines of length $2$ are determined by patterns $\linepattern{0}{01}{1}$, $\linepattern{0}{10}{1}$ 
	and their mirror images (these patterns are not induced by $1$-patterns), 
	as well as by patterns $\linepattern{a}{01}{a}$ and $\linepattern{a}{10}{a}$ ($a\in\AAa$) 
	that are induced by 1-patterns.
	Inner lines of length $3$ are determined by patterns $\linepattern{0}{010}{1}$, $\linepattern{0}{101}{1}$ and their mirror images.
\end{example}

\begin{example}
	We show that the assumption of recognizability in Lemma~\ref{LMM:doplnenie-na-desubstituciu} cannot be omitted
	and that, in Proposition~\ref{P:substitution-of-line-patterns}(b), inclusion cannot be replaced by equality.
	Let $\zeta$ be a substitution given by $0 \to 0101$ and $1 \to 0100$.
	Hence, $q=4$, $\alpha =3$, $\beta = 0$ and
	$$ 
		x = 0101\,\, 0100\,\, 0101\,\, 0100\,\, 0101\,\, 0100\,\, 0101\,\, 0101\,\, 0101 \,\,0100 \,\,0101 \,\,0100 \,\,0101 
	\dots . 
	$$
	The word $w=0101010$ is not recognizable, as $w = x_{[i, i+\abs{w})} = x_{[j, j+\abs{w})}$
	for $i=24$ and $j=26$; thus $\recog\ge 8$ (in fact, $\recog=14$).
	
	Clearly, an inner line-pattern $\linepattern{c}{v}{d}=\linepattern{0}{0}{0}$ induces (not $\recog$-recognizable)
	$P=\linepattern{a}{w}{b}$ ($w=0101010$, $a=b=1$) by Proposition~\ref{P:substitution-of-line-patterns}.
	Put $e=u=0$ and $f=1$; then 
	\begin{eqnarray*}
		\zeta(cvd) &= \ 0101\,\,0101\,\,0101 &= \ 0awb01,
	\\
		\zeta(euf) &=  \ 0101\,\,0101\,\,0100 &= \ 0awb00.
	\end{eqnarray*}
	This shows that, in Lemma~\ref{LMM:doplnenie-na-desubstituciu}, the assumption of recognizability cannot be omitted.		
	Further, $(26, 8) \in \indexoflines{w}{a}{b}\setminus(q\indexoflines{v}{c}{d} - \beta)$	
	and so, in Proposition~\ref{P:substitution-of-line-patterns}(b), inclusion cannot be replaced by equality.	
\end{example}

\subsection{Density of (the set of starting points of) inner lines}
\label{subs:density-of-inner-lines}

In this subsection we prove that the density of $\kkk_\ell$ in $\NNN_0^2$ 
$$ 
	\dens(\kkk_\ell)
	= \lim \limits_{n \to \infty} \frac{1}{n^2}
		\card \big(\kkk_\ell \cap [0, n)^2\big)
$$
always exists and is positive if and only if $\kkk_\ell\ne\emptyset$,
and we give a formula for calculating the density of $\kkk_\ell$
with any $\ell\ge\recog$ from that of $\kkk_{\ell_0}$ with $\ell_0<\recog$;
see Theorem~\ref{THM:density}. To this end, we need some auxiliary results.
Let $\mu$ denote a unique invariant measure of $(X_\zeta, \sigma)$.

\begin{lemma} \label{LMM:density-of-line-patterns-is-positive}
	Let $\linepattern{a}{w}{b}$ be an inner line-pattern.
	Then the density of $\indexoflines{w}{a}{b}$ exists and 
	$$\dens(\indexoflines{w}{a}{b}) = \mu([awb])\cdot \mu([\bar{a}w\bar{b}]) > 0.$$
\end{lemma}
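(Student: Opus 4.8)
The plan is to reduce the planar density of $\indexoflines{w}{a}{b}$ to a product of two one-dimensional word frequencies and then read those frequencies off from unique ergodicity of the $\zeta$-shift.

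First I would unravel Definition~\ref{DEF:line-pattern}: a pair $(i,j)$ lies in $\indexoflines{w}{a}{b}$ precisely when $i-1$ is a starting index of an occurrence of the word $awb$ in $x$ and, independently, $j-1$ is a starting index of an occurrence of $\bar aw\bar b$ in $x$. Hence, writing $A=\{m\in\NNN_0\colon \word{x}{m}{m+\abs{w}+2}=awb\}$ and $B=\{m\in\NNN_0\colon \word{x}{m}{m+\abs{w}+2}=\bar aw\bar b\}$, we get $\indexoflines{w}{a}{b}=(A+1)\times(B+1)$. By the translation rule in~\eqref{EQ:density-props} (which gives $\dens(A+1)=\dens(A)$, $\dens(B+1)=\dens(B)$) and the product rule $\dens(M\times N)=\dens(M)\dens(N)$ recalled in the Preliminaries, it then suffices to prove that $\dens(A)$ and $\dens(B)$ exist, equal $\mu([awb])$ and $\mu([\bar aw\bar b])$ respectively, and are strictly positive.

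Next I would identify $\dens(A)$ with the frequency of the word $u:=awb$ in $x$. The cylinder $[u]$ is clopen, so $\mathbf 1_{[u]}$ is continuous on the compact space, and unique ergodicity of $(X_\zeta,\sigma)$ (which holds by~\cite{michel1976stricte}; see Subsection~\ref{SUBS:substitution}) forces the Birkhoff averages $\tfrac1n\sum_{k=0}^{n-1}\mathbf 1_{[u]}(\sigma^k y)$ to converge to $\mu([u])$ for every $y\in X_\zeta$, in particular for $y=x$. Since $\tfrac1n\sum_{k=0}^{n-1}\mathbf 1_{[u]}(\sigma^k x)=\tfrac1n\card\big(A\cap[0,n)\big)$, this is exactly $\dens(A)=\mu([awb])$. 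The same argument, applied to $\bar aw\bar b$ — which is an allowed word because $P=\linepattern{a}{w}{b}$ is an inner line-pattern and thus $\bar aw\bar b$ occurs in $x$ — yields $\dens(B)=\mu([\bar aw\bar b])$.

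Finally, for positivity I would use minimality of $(X_\zeta,\sigma)$: it makes the support of $\mu$ equal to $X_\zeta$, so every nonempty open subset of $X_\zeta$ has positive $\mu$-measure; as $awb$ and $\bar aw\bar b$ are allowed, $[awb]\cap X_\zeta$ and $[\bar aw\bar b]\cap X_\zeta$ are nonempty open sets, whence $\mu([awb])>0$ and $\mu([\bar aw\bar b])>0$. (Alternatively, primitivity gives syndetic occurrence of every allowed word, hence a positive, uniformly bounded-below frequency.) Combining everything, $\dens(\indexoflines{w}{a}{b})=\dens(A)\dens(B)=\mu([awb])\cdot\mu([\bar aw\bar b])>0$. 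The only genuinely delicate points are the passage from $\mu$-almost-everywhere convergence to convergence at the distinguished point $x$ — which is handled by unique ergodicity together with continuity of $\mathbf 1_{[u]}$ — and the product formula for the two-dimensional density, which is already available; the remainder is bookkeeping about indices and the translation $(A,B)\mapsto(A+1,B+1)$.
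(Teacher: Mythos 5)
Your proposal is correct and follows essentially the same route as the paper: both factor $\indexoflines{w}{a}{b}$ as a translated product of the two occurrence sets of $awb$ and $\bar aw\bar b$, identify each one-dimensional density with the corresponding cylinder measure via unique ergodicity (the paper cites \cite[Corollary~5.3]{queffelec2010substitution} for exactly the fact you derive from Birkhoff averages of the clopen cylinder's indicator), and get positivity from strict ergodicity. No gaps.
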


\begin{proof}
	Since $\linepattern{a}{w}{b}$ is an inner line-pattern,
	$awb$ and $\bar{a}w\bar{b}$ are subwords of $x$.
	Further, since $(X_\zeta, \sigma)$ is strictly ergodic, the measure of every nonempty 
	cylinder is positive.
	Therefore $\mu([awb])$ and $\mu([\bar{a}w\bar{b}])$ are both positive.
	Using \cite[Corollary~5.3]{queffelec2010substitution} and the equality
	$$ 
		\indexoflines{w}{a}{b} 
		=
		\{ i \in \NNN_0 \colon  \sigma^i (x) \in [awb]\} 
		\times
		\{ j \in \NNN_0 \colon  \sigma^j (x) \in [\bar{a}w\bar{b}] \}
		+1,
	$$
	we can easily obtain that
	$\dens(\indexoflines{w}{a}{b}) = \mu([awb])\cdot \mu([\bar{a}w\bar{b}])$.
\end{proof}

\begin{lemma}\label{LMM:about-densities}
	Let $P$ be a $\recog$-recognizable inner line-pattern and $P_k=P$, $k$, $P_0$ 
	be from Corollary \ref{C:repeat-inducing}. Then
	$$
		\dens (\iii(P_k)) =
		q^{-2k} \dens ( \iii(P_0)).
	$$ 
\end{lemma}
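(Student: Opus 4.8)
The plan is to derive this directly from the two formulas established in Corollary~\ref{C:repeat-inducing}, namely $\iii(P_k) = q^k \iii(P_0) - \beta\frac{q^k-1}{q-1}$, together with the density translation/scaling rules in \eqref{EQ:density-props}. First I would note that $\dens(\iii(P_0))$ exists by Lemma~\ref{LMM:density-of-line-patterns-is-positive}, since $P_0$ is an inner line-pattern and that lemma applies to \emph{every} inner line-pattern, not merely $\recog$-recognizable ones. Thus the right-hand side is well-defined.

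Next I would apply \eqref{EQ:density-props} with the affine map $M \mapsto aM + b$ on subsets of $\NNN_0^k$ for $k=2$: here $a = q^k$ (a positive integer) and $b = -\beta\frac{q^k-1}{q-1}$, which is an integer because $\beta(q^k-1)/(q-1) = \beta(1 + q + \dots + q^{k-1}) \in \NNN_0$. Since $\iii(P_k) = q^k \iii(P_0) + b$ is a genuine subset of $\NNN_0^2$ (the shift by $b$ keeps it inside $\NNN_0^2$, as $P_k$ is an inner line-pattern and hence $\iii(P_k) \subseteq \NNN^2$), the second identity in \eqref{EQ:density-props} gives
\begin{equation*}
	\dens(\iii(P_k)) = (q^k)^{-2}\dens(\iii(P_0)) = q^{-2k}\dens(\iii(P_0)),
\end{equation*}
which is exactly the claim. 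Strictly speaking \eqref{EQ:density-props} is stated for $aM+b$ with $b \in \NNN_0$ or at least with $aM+b \subseteq \NNN_0^k$; one should remark that a negative translation that still lands inside $\NNN_0^k$ does not affect the asymptotic density, since $\card\big((aM+b)\cap[0,n)^k\big)$ and $\card\big(aM\cap[0,n)^k\big)$ differ by $O(n^{k-1})$.

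This proof is essentially a one-line consequence of the structural identity \eqref{EQ:repeated-usage}, so there is no real obstacle; the only point requiring a modicum of care is bookkeeping — confirming that $b$ is an integer, that the translated set stays in $\NNN_0^2$, and that the density-scaling rule \eqref{EQ:density-props} applies with exponent $k=2$ and scaling factor $q^k$ (not $q$). If one wanted to avoid relying on the slightly extended form of \eqref{EQ:density-props}, an alternative is to induct on $k$ using Proposition~\ref{P:desubstitution-of-line-patterns} one step at a time: each step gives $\iii(P_i) = q\,\iii(P_{i-1}) - \beta$, hence $\dens(\iii(P_i)) = q^{-2}\dens(\iii(P_{i-1}))$, and multiplying over $i = 1,\dots,k$ yields the stated $q^{-2k}$ factor. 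Either route is short; I would present the one-step induction since it uses only the cleanly stated $\dens(aM+b) = a^{-k}\dens(M)$ with $b$ of fixed sign issues absorbed into the single-step shift by $-\beta$.
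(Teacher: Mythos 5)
Your proposal is correct and follows essentially the same route as the paper, whose proof is the one-liner that the lemma follows from \eqref{EQ:repeated-usage} and \eqref{EQ:density-props}; your extra bookkeeping (integrality of the shift, the set staying in $\NNN_0^2$, existence of the density) is sound but not needed beyond what the paper already asserts.
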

\begin{proof}
	The lemma follows from \eqref{EQ:repeated-usage}
	and \eqref{EQ:density-props}.
\end{proof}

\begin{lemma}\label{P:density-is-sum}
	Let $\ell \in \NNN$ be such that $\kkk_\ell\ne\emptyset$.
	Let $P^j$ $(j=1,\dots,r)$ be inner line-patterns of length $\ell$
	from Corollary~\ref{COR:lenghts}.
	Then the density of $\kkk_\ell$ exists and
	\begin{equation}\label{density-is-sum}
		 \dens(\kkk_\ell) = \sum_{j=1}^r \dens (\iii (P^j)) > 0.
	\end{equation}
\end{lemma}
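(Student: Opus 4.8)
The plan is to derive Lemma~\ref{P:density-is-sum} directly from the decomposition of $\kkk_\ell$ furnished by Corollary~\ref{COR:lenghts} together with the two facts already established about densities: that $\dens(\iii(P^j))$ exists and is strictly positive for every inner line-pattern $P^j$ (Lemma~\ref{LMM:density-of-line-patterns-is-positive}), and that density is additive over disjoint sets (the first identity in \eqref{EQ:density-props}). Concretely, I would first invoke Corollary~\ref{COR:lenghts}: since $\kkk_\ell\ne\emptyset$, there are finitely many inner line-patterns $P^1,\dots,P^r$ of length $\ell$, with $r\ge 1$, such that $\kkk_\ell=\bigsqcup_{j=1}^r\iii(P^j)$, the union being genuinely disjoint by Lemma~\ref{LMM:I(P)-disjoint}.

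Next I would observe that each $\dens(\iii(P^j))$ exists by Lemma~\ref{LMM:density-of-line-patterns-is-positive}, so that a finite sum of sets with existing densities that are pairwise disjoint again has a density equal to the sum; this is just iterating the additivity rule in \eqref{EQ:density-props}. Hence $\dens(\kkk_\ell)$ exists and equals $\sum_{j=1}^r\dens(\iii(P^j))$. Finally, positivity is immediate: each summand is strictly positive by Lemma~\ref{LMM:density-of-line-patterns-is-positive} (being a product of measures of nonempty cylinders, which are positive by strict ergodicity of the $\zeta$-shift), and $r\ge 1$, so the total is strictly positive. That yields \eqref{density-is-sum}.

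There is essentially no obstacle here; the lemma is a bookkeeping consequence of results already proved. The only point that requires a word of care is that the additivity in \eqref{EQ:density-props} is stated for a pair of disjoint sets, so strictly speaking one applies it $r-1$ times along the partial unions $\iii(P^1)\sqcup\dots\sqcup\iii(P^k)$; at each stage the partial union has a density because the two pieces being combined do (one by induction hypothesis, the other by Lemma~\ref{LMM:density-of-line-patterns-is-positive}) and are disjoint. After this finite induction one reads off both the existence of $\dens(\kkk_\ell)$ and the displayed formula, and the strict inequality follows since every term is positive and the sum is nonempty. I would write the proof in two or three sentences, citing Corollary~\ref{COR:lenghts}, Lemma~\ref{LMM:density-of-line-patterns-is-positive}, and \eqref{EQ:density-props}, without further computation.
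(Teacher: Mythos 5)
Your proposal is correct and follows exactly the paper's own argument: the paper's proof likewise combines the finite disjoint decomposition from Corollary~\ref{COR:lenghts} with the additivity of density in \eqref{EQ:density-props} and the existence and positivity of each $\dens(\iii(P^j))$ from Lemma~\ref{LMM:density-of-line-patterns-is-positive}. Your extra remark about iterating the two-set additivity $r-1$ times is a fair point of care but does not change the substance.
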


\begin{proof}
	Since the union in \eqref{EQ:K_ell id disjoint union} is disjoint and finite, the result
	follows from \eqref{EQ:density-props} and Lemma~\ref{LMM:density-of-line-patterns-is-positive}.
\end{proof}

Now we are able to prove Theorem \ref{THM:density}.

\begin{proof}[Proof of Theorem \ref{THM:density}]
	By Lemma~\ref{LMM:density-of-line-patterns-is-positive} and Corollary~\ref{COR:lenghts}
	it suffices to prove the formulas from \eqref{Case3-in-THM:density}.
	Corollary~\ref{COR:lenghts} gives that
	$q^k = (\ell+ c)/(\ell_0 + c)$ with  $c = (\alpha+\beta)/(q-1)$,
	and
	$\kkk_\ell = q^k \kkk_{\ell_0} - \beta (q^k-1)/(q-1)$,
	where $k$ is the order of every inner line-pattern of length $\ell$.
	Since $\dens (\kkk_\ell)= q^{-2k}\dens(\kkk_{\ell_0})$ by Lemma~\ref{LMM:about-densities},
	the formulas follow.
\end{proof}

\subsection{$0$-boundary lines and line-patterns} \label{SUBS:0-boundary}
The results in Subsections~\ref{SUBS:induces-line-patterns}-\ref{SUBS:induced-Rrecognizable-line-patterns}
deal with \emph{inner} line-patterns.
In this subsection we give corresponding results for \emph{$0$-boundary} line-patterns.
The proofs are analogous and are omitted.

\begin{definition}\label{DEF:0-boundary-line-pattern}
	A \emph{$0$-boundary line-pattern} is a pair $w^b$ such that
	$w$ is a nonempty word, $b\in \AAa$ and
	there is an integer $i \in \NNN$ such that
	$$ 
		wb = \word{x}{i}{i + \abs{w} +1}  
		\quad\text{and}\quad
		w\bar{b} = \word{x}{0}{\abs{w}+1}.
	$$
	The set of all such indexes $i$ is denoted by $\indexoflines{w}{}{b}$.
	We say that a line-pattern $w^b$ is \emph{$\recog$-recognizable} 
	if $w$ is $\recog$-recognizable.
\end{definition}

The set $\indexoflines{w}{}{b}$ is tightly connected with $0$-boundary 
lines in infinite recurrence plot $\recplot(\infty,\eps_0)$.
\begin{proposition}
	Let $\ell \geq 1$ be an integer and $i,j\in\NNN_0$ be distinct.
	Then the triple $(i,j,\ell)$ is a $0$-boundary line in $\recplot(\infty,\eps_0)$
	if and only if there is a $0$-boundary line-pattern $w^b$ of length $\ell$
	such that either $i \in \indexoflines{w}{}{b}$ and $j=0$, or vice versa.
\end{proposition} 

As in Subsection~\ref{SUBS:induces-line-patterns},
every $0$-boundary line-pattern induces another such pattern.

\begin{proposition}
	\label{P:substitution-of-line-patterns-for-0-boundary}
	Let $\linepattern{}{v}{d}$ be a $0$-boundary line-pattern.
	Put
	\begin{equation}\label{EQ:induced0LP}
	\begin{split}
		w &= \zeta(v)\word{\zeta(d)}{0}{\alpha};
		\\
		b &=  \zeta(d)_{\alpha}.
	\end{split}
	\end{equation}
	Then $\linepattern{}{w}{b}$ is a $0$-boundary line-pattern and
	\begin{enumerate}[label=(\alph*)]
		\item 
		$\abs{w} = q\abs{v} + \alpha$;
		\item 
		$\iii_w^b \supseteq q  \iii_v^d$.
	\end{enumerate}
\end{proposition}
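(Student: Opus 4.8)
The plan is to run the argument of Proposition~\ref{P:substitution-of-line-patterns} with the left separator simply deleted. The only genuinely new feature — the reason the occurrence of the generating word anchored at index $0$ survives an application of $\zeta$ — is that $\zeta(0)$ and $\zeta(1)$ share a common prefix of length $\alpha$ (Definition~\ref{DEF:alpha-and-beta}); hence applying $\zeta$ to a word sitting at the very start of $x$ again produces a word sitting at the very start of $x$, followed by the single extra letter $b$ (respectively $\bar{b}$) recording which of $\zeta(d),\zeta(\bar{d})$ was used.

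Concretely, I would fix a $0$-boundary line-pattern $\linepattern{}{v}{d}$, put $\ell=\abs{v}$, and take $i\in\iii_v^d$; that is, $vd=\word{x}{i}{i+\ell+1}$ and $v\bar{d}=\word{x}{0}{\ell+1}$. Applying $x=\zeta(x)$ (and $\abs{\zeta(a)}=q$) to the two occurrences, the word $\zeta(vd)=\zeta(v)\zeta(d)$ starts at index $qi$ and the word $\zeta(v\bar{d})=\zeta(v)\zeta(\bar{d})$ starts at index $0$, so
$$ \word{x}{qi}{q(i+\ell+1)}=\zeta(v)\zeta(d)\quad\text{and}\quad \word{x}{0}{q(\ell+1)}=\zeta(v)\zeta(\bar{d}). $$
By \eqref{assumpt-one-to-one}, $\zeta(d)\ne\zeta(\bar{d})$, and by the definition of $\alpha$ one has $\zeta(d)_h=\zeta(\bar{d})_h$ for $h\in[0,\alpha)$ while $\zeta(d)_\alpha\ne\zeta(\bar{d})_\alpha$. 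Setting $w=\zeta(v)\word{\zeta(d)}{0}{\alpha}$ and $b=\zeta(d)_\alpha$ (so $\bar{b}=\zeta(\bar{d})_\alpha$), and reading off the appropriate prefixes of the two displayed words, one gets
$$ wb=\word{x}{qi}{q(i+\ell)+\alpha+1}\quad\text{and}\quad w\bar{b}=\word{x}{0}{q\ell+\alpha+1}; $$
the second equality uses that the length-$\alpha$ prefixes of $\zeta(d)$ and $\zeta(\bar{d})$ coincide, so the part $\zeta(v)\word{\zeta(d)}{0}{\alpha}=w$ is common to both occurrences. Since $v$ is nonempty, so is $\zeta(v)$, hence $w$ is nonempty; and $qi\ge q\ge 1$, so $qi\in\NNN$. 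Thus $\linepattern{}{w}{b}$ is a $0$-boundary line-pattern with $qi\in\iii_w^b$; as $i\in\iii_v^d$ was arbitrary, $q\iii_v^d\subseteq\iii_w^b$, which is~(b), and $\abs{w}=\abs{\zeta(v)}+\alpha=q\abs{v}+\alpha$ is~(a).

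I do not expect a real obstacle: the statement is the $0$-boundary shadow of Proposition~\ref{P:substitution-of-line-patterns}, and the single point that could in principle fail — whether the property ``$w$ is a prefix of $x$'' is inherited under $\zeta$ — is exactly what the coincidence of the length-$\alpha$ prefixes of $\zeta(0)$ and $\zeta(1)$ guarantees. The one step that warrants a line of care is the index bookkeeping, namely that $\zeta$ applied to a subword of $x$ starting at index $i$ yields a subword starting at index $qi$; this is also the reason the inclusion in~(b) is in general strict, exactly as for inner line-patterns in Proposition~\ref{P:substitution-of-line-patterns}(b).
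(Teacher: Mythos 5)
Your proof is correct and is precisely the argument the paper intends: the paper omits the proof of this proposition, stating only that it is analogous to that of Proposition~\ref{P:substitution-of-line-patterns}, and your adaptation (dropping the left separator, using the common length-$\alpha$ prefix of $\zeta(d)$ and $\zeta(\bar d)$ to keep the second occurrence anchored at index $0$, and tracking that an occurrence at index $i$ maps to one at index $qi$) is exactly that analogue. The index bookkeeping is right, including the observation that $w$ is nonempty and $qi\in\NNN$.
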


\begin{definition}\label{DEF:induced-for-0-boundary}
	Let $Q = \linepattern{}{v}{d}$, $P = \linepattern{}{w}{b}$ be $0$-boundary line-patterns.
	We say that $P$ is \emph{induced} by $Q$ (or that $Q$ \emph{induces} $P$)
	if \eqref{EQ:induced0LP} holds.
\end{definition}

\begin{lemma}[On induced separators for $0$-boundary line-patterns]
	\label{LMM:on-induced-separators-on-0-boundary}
	The substitution $\zeta$ satisfies exactly one of the following two conditions:
	\begin{enumerate}
		\item[(+)] $b=d$ whenever $\linepattern{}{v}{d}$ induces $\linepattern{}{w}{b}$;
		\item[(\textendash)] $b=\bar{d}$ whenever $\linepattern{}{v}{d}$ induces $\linepattern{}{w}{b}$.
	\end{enumerate}
\end{lemma}
Clearly, the case (+) in the preceding lemma corresponds to the cases (++) and (\textendash+),
and the case (\textendash) corresponds to the cases (+\textendash) and (\textendash\textendash)
from Lemma~\ref{LMM:on-induced-separators}

Lemma~\ref{LMM:possible-lengths},
Proposition~\ref{P:desubstitution-of-line-patterns},
and Lemmas~\ref{LMM:diff-line-patterns-induces-different-line-patterns} and \ref{LMM:disjoint-snakes}
hold as stated also for $0$-boundary line-patterns,
but with $\beta$ replaced by $0$.

Analogously to \eqref{EQ:Kell} we define
\begin{equation}
	\kkk_\ell^0 
	= 
	\{ (i,j) \in \NNN_0^2\colon 
		(i,j,\ell) \text{ is a } 0 \text{-boundary-line in } 
		\recplot(\infty, \eps_0) 
	\}.
\end{equation}

\begin{corollary}\label{C:repeat-inducing-0-boundary}
	Let $P$ be a $\recog$-recognizable $0$-boundary line-pattern. Then there exist 
	a unique integer $k=k(P)\ge 1$ (called the \emph{order} of $P$)
	and unique $0$-boundary line-patterns $P_0,\dots,P_k$ such that
	\begin{enumerate}[label=(\alph*)]
		\item $P_k=P$;
		\item $P_i$ is $\recog$-recognizable and is induced by $P_{i-1}$ for every $i>0$;
		\item $P_0$ is not $\recog$-recognizable.
	\end{enumerate}
	Moreover, $k$ and $\abs{P_0}$ depends only on the length $\abs{P}$ of $P$,
	\begin{equation*}
		\iii(P) = q^k \iii(P_0)
		\quad\text{and}\quad
		\abs{P} = q^k\abs{P_0} + \alpha \frac{q^k-1}{q-1}\,.
	\end{equation*}
\end{corollary}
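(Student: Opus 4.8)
The plan is to reproduce the proof of Corollary~\ref{C:repeat-inducing} almost verbatim, replacing inner line-patterns by $0$-boundary line-patterns and setting $\beta=0$ throughout. The only input required is the $0$-boundary analogue of Proposition~\ref{P:desubstitution-of-line-patterns} (valid, as noted in the paragraph preceding this corollary): every $\recog$-recognizable $0$-boundary line-pattern $P$ is induced by a \emph{unique} $0$-boundary line-pattern $Q$, and then $\iii(P)=q\,\iii(Q)$ and $\abs{P}=q\,\abs{Q}+\alpha$. Granting this, the rest is a routine finite induction.

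First I would build the chain by repeated desubstitution. Set $P^{(1)}=P$; having obtained a $\recog$-recognizable $P^{(m)}$, let $P^{(m+1)}$ be the unique $0$-boundary line-pattern inducing it, so that $\abs{P^{(m+1)}}=(\abs{P^{(m)}}-\alpha)/q$, which is a positive integer by the $0$-boundary analogue of Lemma~\ref{LMM:possible-lengths}. Since $q\ge 2$ and $\alpha\ge 0$, one has $\abs{P^{(m+1)}}<\abs{P^{(m)}}$, so the lengths form a strictly decreasing sequence of positive integers and the process must terminate: there is a smallest $k\ge 1$ (it is $\ge 1$ because $P$ itself is $\recog$-recognizable) for which $P^{(k+1)}$ is not $\recog$-recognizable. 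As $P^{(k+1)}$ is by construction a $0$-boundary line-pattern, it still has a nonempty generating word. Relabelling $P_i:=P^{(k+1-i)}$ for $0\le i\le k$ yields $P_k=P$, each $P_i$ with $i\ge 1$ is $\recog$-recognizable and induced by $P_{i-1}$, and $P_0$ is not $\recog$-recognizable; this is exactly (a)--(c).

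Next I would deduce uniqueness and the dependence on $\abs{P}$. At each step the inducing $0$-boundary line-pattern is unique, so the entire chain $P_0,\dots,P_k$ and the integer $k$ are uniquely determined by $P$. Moreover the recursion $\abs{P^{(m+1)}}=(\abs{P^{(m)}}-\alpha)/q$ depends only on lengths, and ``$\recog$-recognizable'' means precisely ``of length at least $\recog$''; hence both the number of steps $k$ and the terminal length $\abs{P_0}$ depend only on $\abs{P}$. Finally, iterating $\iii(P_m)=q\,\iii(P_{m-1})$ down the chain gives $\iii(P)=\iii(P_k)=q^k\,\iii(P_0)$, and iterating $\abs{P_m}=q\,\abs{P_{m-1}}+\alpha$ gives $\abs{P}=q^k\abs{P_0}+\alpha(1+q+\dots+q^{k-1})=q^k\abs{P_0}+\alpha\frac{q^k-1}{q-1}$.

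I do not expect any genuine obstacle here; the only point needing care is confirming the $0$-boundary version of Proposition~\ref{P:desubstitution-of-line-patterns}, which is established exactly as Lemmas~\ref{LMM:possible-lengths} and \ref{LMM:doplnenie-na-desubstituciu} and Proposition~\ref{P:desubstitution-of-line-patterns}: recognizability of the generating word $w$ pins down the cutting bars, so there is a unique word $v$ and unique letter $d$ with $wb$ a subword of $\zeta(vd)$, the difference being that a $0$-boundary pattern has no left separator, so there is nothing to desubstitute on the left and $\beta$ is replaced by $0$. Everything downstream is the same finite induction as for Corollary~\ref{C:repeat-inducing}.
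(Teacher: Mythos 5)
Your proposal is correct and follows exactly the route the paper intends: the paper omits this proof as ``analogous'' to Corollary~\ref{C:repeat-inducing}, which is itself obtained by repeated use of the desubstitution proposition, and you carry out precisely that iteration with $\beta$ replaced by $0$, including the termination argument via strictly decreasing lengths and the summation of the geometric series for $\abs{P}$. No gaps.
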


\begin{corollary}\label{COR:lenghts-0-boundary}
	Let $\ell \in\NNN$ be such that $\kkk_\ell^0 \neq \emptyset$.
	Then there are $r \in \NNN$ and $0$-boundary line-patterns $P^1, \dots, P^r$ of length $\ell$
	such that
	$$ 
		\kkk_\ell^0 
		\ = \ 
		\bigsqcup_{j=1}^{r} \big(\iii (P^j) \times \{ 0\} \big)
		\ \sqcup  \ 
		\bigsqcup_{j=1}^{r} \big( \{ 0\}  \times \iii (P^j)\big). 
	$$ 
	Further, if $\ell \geq \recog$, $k$ is the order of every $P^j$, 
	and $P_i^j$ $(1 \leq j \leq r, \ 0 \leq i < k )$ are $0$-boundary line-patterns from Corollary~\ref{C:repeat-inducing-0-boundary} applied to $P^j$, then
	$$ 
		\kkk_\ell^0 = q^k \kkk_{\ell_0}^0,
		\qquad
		\kkk_{\ell_0}^0 
		\ =\  
		\bigsqcup_{j=1}^{r} \big(\iii (P_0^j) \times \{ 0\} \big)
		\ \sqcup \ 
		\bigsqcup_{j=1}^{r} \big( \{ 0\}  \times \iii (P_0^j)\big)
	$$ 
	and
	$$ 
		\ell = q^k \ell_0 + \alpha\frac{q^k-1}{q-1},
	$$
	where $\ell_0$ is the length of (every) $P_0^j$.
\end{corollary}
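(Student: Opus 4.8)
The plan is to reproduce, step for step, the argument for Corollary~\ref{COR:lenghts}, working throughout with the $0$-boundary versions of the tools assembled in this subsection; recall that for $0$-boundary line-patterns every occurrence of $\beta$ is replaced by $0$ and every occurrence of $\alpha+\beta$ by $\alpha$. The only feature absent from the inner case is that a single $0$-boundary line of length $\ell$ contributes to $\kkk_\ell^0$ a pair $(i,0)$ together with its mirror $(0,i)$, which is why each line-pattern $P^j$ shows up twice in the asserted decomposition.

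\emph{First assertion.} Let $P^1,\dots,P^r$ be the (finitely many) $0$-boundary line-patterns of length $\ell$. By the proposition following Definition~\ref{DEF:0-boundary-line-pattern} and the hypothesis $\kkk_\ell^0\ne\emptyset$ one gets $r\ge 1$ and $\kkk_\ell^0=\bigcup_{j=1}^r\big((\iii(P^j)\times\{0\})\cup(\{0\}\times\iii(P^j))\big)$. The union over $j$ is disjoint because distinct $0$-boundary line-patterns have disjoint index sets (the $0$-boundary analogue of Lemma~\ref{LMM:I(P)-disjoint}, again immediate from the definitions), and for each fixed $j$ the two summands are disjoint since $0\notin\iii(P^j)$ by Definition~\ref{DEF:0-boundary-line-pattern}. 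This yields the first displayed identity.

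\emph{Scaling when $\ell\ge\recog$.} Apply Corollary~\ref{C:repeat-inducing-0-boundary} to each $P^j$. Since the order and the base length depend only on $\ell$, there are a common $k\ge 1$ and a common $\ell_0<\recog$ with $\ell=q^k\ell_0+\alpha(q^k-1)/(q-1)$ and $\iii(P^j)=q^k\iii(P_0^j)$ for all $j$. The $0$-boundary analogue of Lemma~\ref{LMM:disjoint-snakes} shows that $P_0^1,\dots,P_0^r$ are pairwise distinct, hence have pairwise disjoint index sets. Substituting $\iii(P^j)=q^k\iii(P_0^j)$ into the identity just proved and using $q^k\cdot 0=0$ gives $\kkk_\ell^0=q^k\big(\bigsqcup_{j=1}^r(\iii(P_0^j)\times\{0\})\sqcup\bigsqcup_{j=1}^r(\{0\}\times\iii(P_0^j))\big)$.

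\emph{Identifying the parenthesized set with $\kkk_{\ell_0}^0$}, i.e. checking $\kkk_\ell^0=q^k\kkk_{\ell_0}^0$, is the one place where a genuine (though routine) argument beyond mere translation is required; this is the main obstacle, such as it is. By the first assertion applied to $\ell_0$ it suffices to show that $\{P_0^1,\dots,P_0^r\}$ is exactly the set of all $0$-boundary line-patterns of length $\ell_0$. One inclusion is clear. For the other, take any $0$-boundary line-pattern $Q$ of length $\ell_0$ and apply Proposition~\ref{P:substitution-of-line-patterns-for-0-boundary} $k$ times; the resulting pattern $Q'$ has length $q^k\ell_0+\alpha(q^k-1)/(q-1)=\ell$ and is $\recog$-recognizable, because already the first induced pattern has length $q\ell_0+\alpha\ge\recog$ (this being the defining property of $\ell_0$ in Corollary~\ref{C:repeat-inducing-0-boundary}). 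Thus $Q'=P^j$ for some $j$, and by uniqueness of desubstitution (the $0$-boundary analogue of Proposition~\ref{P:desubstitution-of-line-patterns} together with Lemma~\ref{LMM:diff-line-patterns-induces-different-line-patterns}) the base of the chain of $Q'$ is $P_0^j=Q$. Finally, the length relation $\ell=q^k\ell_0+\alpha(q^k-1)/(q-1)$ has already been recorded, which completes the proof.
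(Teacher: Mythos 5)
Your proof is correct and follows the same route the paper intends: it transplants the proof of Corollary~\ref{COR:lenghts} to the $0$-boundary setting (with $\beta$ replaced by $0$ and $\alpha+\beta$ by $\alpha$), using the proposition after Definition~\ref{DEF:0-boundary-line-pattern}, the disjointness lemmas, and Corollary~\ref{C:repeat-inducing-0-boundary}, exactly as the paper does for inner lines before declaring the $0$-boundary proofs ``analogous and omitted.'' Your explicit verification that $\{P_0^1,\dots,P_0^r\}$ exhausts all $0$-boundary line-patterns of length $\ell_0$ (via $q\ell_0+\alpha\ge\recog$ and uniqueness of desubstitution) is a detail the paper leaves implicit, and it is argued correctly.
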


\section{Lines in finite symbolic recurrence plot $\recplot (n, \eps_0=1/2)$}\label{S:lengths-and-density-finite}

As in the previous section, let $\zeta$ be a uniform substitution satisfying assumptions \eqref{assumpt-start-with-0}--\eqref{assumpt-aperiodic}
and $x = \zeta^\infty(0)$ be the unique fixed point of it.
Now, our focus shifts to the comparison of \emph{infinite} $\recplot(\infty, \eps_0)$ and 
\emph{finite} $\recplot(n, \eps_0)$ symbolic recurrence plots of the sequence $x$ (recall that $\eps_0=1/2$).
We will show that there is a relatively small number of possible lengths of inner and $0$-boundary lines in the finite recurrence plot, see Propositions~\ref{PROP:maximal-number-of-lengths} and \ref{PROP:maximal-number-of-lengths-for-0-boundary};
moreover, every line starting in $\recplot(n, \eps_0)$ has its length (according to $\recplot(\infty, \eps_0)$)
bounded by $\recog n$.
Furthermore, we will show that the number of recurrences in $n$-boundary lines
grows at most linearly with $n$, see Proposition~\ref{PROP:recur-in-n-bound}.

\begin{proposition}[Lengths of inner lines]\label{PROP:maximal-number-of-lengths}
	Let $n \ge 2$.
	Then, in recurrence plot $\recplot(\infty, \eps_0)$, 
	the number of different lengths of inner lines starting in $[0,n)^2$
	is bounded from above by
	\begin{equation*}
		(\recog -1 )(1 + \log_q n)
	\end{equation*}
	and the length of every such line is smaller than $\recog n$.
\end{proposition}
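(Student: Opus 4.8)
The plan is to bound separately the number of possible lengths of inner lines and the maximal length, using Proposition~\ref{P:desubstitution-of-line-patterns} as the engine for ``peeling off'' substitution layers. First I would observe that if $(i,j,\ell)$ is an inner line starting in $[0,n)^2$ (in the \emph{infinite} plot $\recplot(\infty,\eps_0)$, so this is genuinely about lengths of the corresponding inner line-patterns), then by Proposition~\ref{PROP:lines-and-patterns} it corresponds to a unique inner line-pattern $P$ with $\abs{P}=\ell$ and $(i,j)\in\iii(P)$. If $\ell<\recog$, there are at most $\recog-1$ such lengths. If $\ell\ge\recog$, then $P$ is $\recog$-recognizable and Corollary~\ref{C:repeat-inducing} produces its order $k\ge 1$ and the chain $P_0,\dots,P_k=P$ with $\abs{P_0}<\recog$ and, by \eqref{EQ:repeated-usage}, $\iii(P)=q^k\iii(P_0)-\beta(q^k-1)/(q-1)$. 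In particular every pair $(i,j)\in\iii(P)$ satisfies $i,j\ge q^k\cdot 1-\beta(q^k-1)/(q-1)$, and since $\beta/(q-1)<1$ this forces roughly $i,j> q^{k}-1$, hence $q^k\le$ (something linear in $n$). Being a bit careful: $i\ge q^k - \beta(q^k-1)/(q-1) = q^k(1 - \beta/(q-1)) + \beta/(q-1) \ge q^{k}\cdot\frac{1}{q-1}$ (using $\beta\le q-2$, or handling $\beta=q-1$ separately), so $q^k \le (q-1)n$, giving $k\le \log_q((q-1)n) \le 1+\log_q n$ once one checks $q-1<q$. Actually it is cleanest to argue directly from $\iii(P_0)\subseteq\NNN^2$ and the shift by $\beta\frac{q^k-1}{q-1}<q^k$: any point of $\iii(P)$ has both coordinates $\ge q^k - (q^k-1) = 1$ trivially, so instead I must use that the smallest element of $\iii(P_0)$ is $\ge 1$ together with the multiplicative factor $q^k$; then $\min\iii(P) \ge q^k\cdot 1 - \beta\frac{q^k-1}{q-1} \ge q^k - (q^k-1) = 1$ is too weak, so I genuinely need the sharper estimate $\beta\frac{q^k-1}{q-1}\le (q-1)\frac{q^k-1}{q-1}=q^k-1$, i.e. I should track that $\iii(P_0)$ avoids $0$ in a stronger sense or simply use $c=(\alpha+\beta)/(q-1)\in[0,1]$ from Theorem~\ref{THM:density}: write $\ell = q^k\ell_0 + c'(q^k-1)$ with $c'=(\alpha+\beta)/(q-1)$ bounded, so $\ell \ge q^k$ and, combined with the starting-point constraint, $q^k\le n$ up to a bounded factor.

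Next, for each fixed order $k$ with $1\le k\le 1+\log_q n$, the length $\ell=q^k\ell_0+(\alpha+\beta)(q^k-1)/(q-1)$ is determined by $\ell_0$, and $\ell_0$ ranges over at most $\recog-1$ values (namely $1,\dots,\recog-1$). Together with the $\le\recog-1$ lengths coming from $\ell<\recog$ (the $k=0$ case, so to speak), the total number of distinct lengths is at most $(\recog-1)\cdot\big(1+(1+\log_q n)\big)$ — which I would then tighten to the claimed $(\recog-1)(1+\log_q n)$ by absorbing the ``$k=0$'' family into the count, e.g. by noting that $k$ ranges over $\{0,1,\dots,\lfloor\log_q n\rfloor\}$, at most $1+\log_q n$ values, and for each the length is one of at most $\recog-1$ possibilities. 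So the key line is: number of lengths $\le (\text{number of possible orders }k)\times(\recog-1) \le (1+\log_q n)(\recog-1)$.

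For the length bound $\ell<\recog n$: if $\ell<\recog$ then trivially $\ell<\recog n$ since $n\ge 2\ge 1$. If $\ell\ge\recog$, use $\ell = q^k\ell_0 + (\alpha+\beta)(q^k-1)/(q-1)$ with $\ell_0<\recog$ and $(\alpha+\beta)/(q-1)\le 1$, so $\ell \le q^k\ell_0 + (q^k-1) < q^k(\ell_0+1)\le q^k\recog$; and from the starting-point estimate $q^k$ is bounded by (a quantity $\le$) $n$. Here I would make sure the bound $q^k\le n$ holds exactly: since $(i,j)\in\iii(P)$ with $i<n$, and $i = q^k i_0 - \beta(q^k-1)/(q-1)$ for some $i_0\ge 1$ with $(i_0,j_0)\in\iii(P_0)$, we get $i \ge q^k - \beta\frac{q^k-1}{q-1} \ge q^k - (q^k-1) = 1$ — again too weak — so the honest route is $i = q^k i_0 - \beta\frac{q^k-1}{q-1} > q^k(i_0 - 1) \ge 0$ is vacuous, hence I should instead use $i \ge q^k\cdot 1 - \beta\frac{q^k-1}{q-1}$ and the fact that $\beta\le q-1$ gives $i \ge q^k - (q^k - 1) = 1$, which still only yields $q^k \le i + (q^k-1)$... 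The clean fix is to note $q\mid(i+\beta)$ actually $q^k\mid(i+\beta\frac{q^k-1}{q-1})$ from \eqref{EQ:repeated-usage}, so $q^k \le i + \beta\frac{q^k-1}{q-1}$, i.e. $q^k(1-\frac{\beta}{q-1}) \le i + \frac{\beta}{q-1}$; when $\beta<q-1$ this gives $q^k\le C i < Cn$ for an absolute constant, and $\ell<\recog q^k < \recog C n$, which after being slightly more careful (e.g. treating $\beta=q-1$, forcing $\alpha=0$, and $c=1$, separately via $\ell=q^k\ell_0+q^k-1<q^k\recog$ and $q^k\mid i+1$ so $q^k\le i+1\le n$) yields $\ell<\recog n$.

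The main obstacle I anticipate is precisely this bookkeeping in the boundary case where $\beta$ is close to $q-1$: the affine shift $-\beta(q^k-1)/(q-1)$ in $\iii(P)=q^k\iii(P_0)-\beta(q^k-1)/(q-1)$ can almost cancel the $q^k$ scaling, so one must use the \emph{divisibility} statement $q^k\mid i+\beta(q^k-1)/(q-1)$ (or equivalently the description of $\iii(P_0)\subseteq\NNN^2$ together with $q^k\mid$ the shifted coordinate) rather than a crude size estimate, to conclude $q^k\le n$ and hence $\ell<\recog n$. Everything else — counting orders $k$, counting base lengths $\ell_0$, assembling the product $(\recog-1)(1+\log_q n)$ — is routine given Corollary~\ref{C:repeat-inducing} and Proposition~\ref{PROP:lines-and-patterns}.
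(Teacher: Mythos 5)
Your overall architecture is the same as the paper's: split into $\ell<\recog$ (at most $\recog-1$ lengths) and $\ell\ge\recog$, use Corollary~\ref{COR:lenghts} to write $\ell=q^k\ell_0+(\alpha+\beta)\frac{q^k-1}{q-1}<\recog q^k$ with $\ell_0<\recog$, and bound $k$ from the fact that some starting point lies in $[0,n)^2$. But the step you yourself flag as the main obstacle --- getting $q^k<n$ (not just $q^k\le Cn$) in a way that survives $\beta=q-1$ --- is genuinely not closed in your write-up, and the workarounds you sketch do not work. The crude estimate gives only $i\ge q^k\cdot 1-\beta\frac{q^k-1}{q-1}\ge 1$, as you note; the divisibility route has a sign error ($i=q^ki_0-(q^k-1)$ gives $q^k\mid i-1$, not $q^k\mid i+1$) and collapses when $i_0=1$, since then $i-1=0$ and divisibility yields nothing; and the $\beta<q-1$ branch only gives $q^k\le(q-1)n$, hence $\ell<\recog(q-1)n$ and a length count of $(\recog-1)(2+\log_q n)$, both weaker than the claimed bounds. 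Your plan to ``absorb'' the $k=0$ family into the count cannot recover the factor you lose here.

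The missing idea is elementary: by definition of $\kkk_{\ell_0}$, the preimage pair $(i_0,j_0)\in\kkk_{\ell_0}$ satisfies $i_0,j_0\ge 1$ \emph{and} $i_0\ne j_0$, so $\max\{i_0,j_0\}\ge 2$. Assuming without loss of generality $i_0\ge 2$ and using $\beta\le q-1$,
\begin{equation*}
  n>i=q^k i_0-\beta\frac{q^k-1}{q-1}\ \ge\ 2q^k-(q^k-1)\ =\ q^k+1\ >\ q^k ,
\end{equation*}
which gives $k<\log_q n$ uniformly in $\beta$ (including $\beta=q-1$), hence at most $\log_q n$ admissible orders $k\ge 1$ per base length $\ell_0$, the count $(\recog-1)(1+\log_q n)$, and $\ell<\recog q^k<\recog n$. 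This is exactly the paper's argument; with that one observation inserted, the rest of your proposal (counting $\ell_0$ over $\recog-1$ values, bounding $\ell$ by $\recog q^k$) goes through as written.
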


\begin{proof}
	Let $\ell \geq \recog$ and assume that, in $\recplot(\infty, \eps_0)$,
	there is an inner line of length $\ell$ starting in
	$[0,n)^2$; that is, $\kkk_\ell \cap [1, n)^2\ne\emptyset$.
	Fix any $(i,j)\in\kkk_\ell \cap [1, n)^2$.
	Let $k\ge 1$ and $\ell_0<\recog$ be from Corollary~\ref{COR:lenghts};
	then 
	$\ell = q^k(\ell_0 + (\alpha+\beta)/(q-1))-  (\alpha+\beta)/(q-1) < \recog q^k$
	and $ \kkk_\ell = q^k \kkk_{\ell_0} - \beta(q^k-1)/(q-1)$.
	Hence there exists $(i_0, j_0) \in \kkk_{\ell_0}$ such that
	$ i = q^k i_0 - \beta(q^k-1)/(q-1)$
	and
	$ j = q^k j_0  - \beta(q^k-1)/(q-1)$.
	By the definition of $\kkk_{\ell_0}$ we have $i_0 \geq 1$, $j_0 \geq 1$ and
	$ i_0 \neq j_0$, so $i_0 \geq 2$ or $j_0 \geq 2$;
	we may assume the former.
	Then, using that $\beta \le q-1$,
	$$ 
		n > i =  q^k i_0 - \beta(q^k-1)/(q-1) > q^k,
	$$
	hence $k < \log_q n$.
	Thus, less than $\log_q n$ different lengths $\ell\ge\recog$
	of inner lines in $\recplot(\infty, \eps_0)$ starting in $[0,n)^2$ correspond to 
	fixed $\ell_0 < \recog$.
	From this fact we easily have that there are less than $(\recog-1)(1+\log_q n)$ different lengths of inner lines in $\recplot (\infty, \eps_0)$ starting in $[0,n)^2$.
	
	Finally, as was proved above, $\ell< \recog q^k$ and $q^k<n$, hence $\ell < \recog n$
	for every $\ell\ge\recog$ with $\kkk_\ell\cap [1,n)^2\ne\emptyset$.
	Since $\ell < \recog n$ is trivial for $\ell<\recog$, the proof is finished.
\end{proof}

\begin{proposition}[Lengths of $0$-boundary lines]\label{PROP:maximal-number-of-lengths-for-0-boundary}
	Let $n \ge 2$.
	Then, in recurrence plot $\recplot(\infty, \eps_0)$, the number of different lengths of $0$-boundary lines starting in $[0,n)^2$ is bounded from above by
	\begin{equation*}
		(\recog -1 )(1 + \log_q n)
	\end{equation*}
	and the length of every such line is less than $\recog n$.
\end{proposition}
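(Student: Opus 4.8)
The plan is to transcribe the proof of Proposition~\ref{PROP:maximal-number-of-lengths} almost verbatim, replacing inner line-patterns by $0$-boundary ones and invoking the $0$-boundary versions of the desubstitution results, namely Corollaries~\ref{C:repeat-inducing-0-boundary} and \ref{COR:lenghts-0-boundary}, in which $\beta$ is replaced by $0$.

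First I would fix $\ell \geq \recog$ such that $\recplot(\infty,\eps_0)$ contains a $0$-boundary line of length $\ell$ starting in $[0,n)^2$, i.e.\ $\kkk_\ell^0 \cap [0,n)^2 \neq \emptyset$, and choose such a starting point. Since it lies on a $0$-boundary line, one of its coordinates is $0$ and the other one, say $i$, satisfies $1 \leq i < n$. By Corollary~\ref{COR:lenghts-0-boundary} there are a unique $k \geq 1$ and a unique $\ell_0 < \recog$ with $\ell = q^k\ell_0 + \alpha(q^k-1)/(q-1)$, hence $\ell < \recog\, q^k$ (using $\ell_0 \leq \recog - 1$ and $\alpha \leq q-1$), together with $\kkk_\ell^0 = q^k\kkk_{\ell_0}^0$. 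Consequently $i = q^k i_0$ for some $i_0$ with $(i_0,0) \in \kkk_{\ell_0}^0$ or $(0,i_0) \in \kkk_{\ell_0}^0$; by Definition~\ref{DEF:0-boundary-line-pattern} the relevant index lies in $\NNN$, so $i_0 \geq 1$, and therefore $n > i = q^k i_0 \geq q^k$, that is, $k < \log_q n$.

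It follows that, for each fixed $\ell_0 \in \{1,\dots,\recog-1\}$, the lengths $\ell \geq \recog$ of $0$-boundary lines starting in $[0,n)^2$ that desubstitute to length $\ell_0$ are parametrised by an integer $k$ with $1 \leq k < \log_q n$, so there are fewer than $\log_q n$ of them. Adding the at most $\recog - 1$ possible lengths $\ell < \recog$, we obtain fewer than $(\recog-1)(1+\log_q n)$ lengths in total. Finally, for every occurring $\ell \geq \recog$ we have $\ell < \recog\, q^k < \recog\, n$, while $\ell < \recog \leq \recog\, n$ is trivial for $\ell < \recog$, which gives the stated bound on the length.

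Since the argument is a direct adaptation of the inner-line case, I do not expect a genuine obstacle; the one point that requires slight care is the $0$-boundary bookkeeping: here one coordinate of the starting point is automatically $0$, so — unlike in Proposition~\ref{PROP:maximal-number-of-lengths}, where one needs $i_0 \neq j_0$ to force $\max\{i_0,j_0\} \geq 2$ — the bare inequality $i_0 \geq 1$ already yields $q^k < n$.
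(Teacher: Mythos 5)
Your proposal is correct and follows essentially the same route as the paper: fix a $0$-boundary line of length $\ell\ge\recog$ starting in $[0,n)^2$, apply Corollary~\ref{COR:lenghts-0-boundary} to get $\ell<\recog q^k$ and $\kkk_\ell^0=q^k\kkk_{\ell_0}^0$, deduce $i=q^ki_0$ with $i_0\ge 1$ and hence $q^k<n$, and then count as in Proposition~\ref{PROP:maximal-number-of-lengths}. Your closing observation that the $0$-boundary case is in fact slightly simpler (no need for the $i_0\ne j_0$ argument to force $\max\{i_0,j_0\}\ge 2$) matches exactly how the paper's proof differs from the inner-line one.
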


\begin{proof}
	Let $\ell \geq \recog$ and assume that, in $\recplot(\infty, \eps_0)$, 
	there exists a $0$-boundary line of length $\ell$ starting in $[0,n)^2$; that is, $\kkk_\ell^0 \cap [0, n)^2\ne\emptyset$.
	Fix such a line $(i,j,\ell)$; we may assume that $j=0$. 
	By Corollary~\ref{COR:lenghts-0-boundary}, there exist $\ell_0 < \recog$ 
	and $k\ge 1$ such that
	$\ell = q^k(\ell_0 + \alpha/(q-1))- \alpha/(q-1)<\recog q^k$
	and $ \kkk_\ell^0 = q^k \kkk_{\ell_0}^0$.
	Hence $i_0=q^{-k}i$ is an integer and $(i_0,0) \in \kkk_{\ell_0}^0$.
	Since $i_0 \geq 1$, we have
	$$ 
		n > i =  q^k i_0  \geq q^k,
	$$
	thus $k < \log_q n$.
	Now the proof can be finished analogously as that of Proposition~\ref{PROP:maximal-number-of-lengths}.
\end{proof}

Consider now $n$-boundary lines in $\recplot(n,\eps_0)$. Every such line is a ``subset'' of a (possibly longer)
inner or $0$-boundary line in $\recplot(\infty,\eps_0)$ with the same starting point.
Though, by the previous results, the $\recplot(\infty,\eps_0)$-lengths of these lines are bounded from above by $\recog n$,
this does not imply that total number of recurrences in such lines is relatively small. It is the purpose of 
the following proposition to show that this is in fact true. 

We introduce some notation to distinguish lengths of lines in finite and infinite recurrence plots.
For an integer $n\ge 2$ and every $(i,j)\in[0,n)^2$ put
\begin{equation*}
	\ell_{ij}^n = \begin{cases}
		0 &\text{if there is no line in } \recplot(n,\eps_0) \text{ starting at } (i,j),
		\\
		\ell &\text{if there is a line in } \recplot(n,\eps_0) \text{ starting at } (i,j) \text{ of length }\ell,
	\end{cases}
\end{equation*}
and
\begin{equation*}
	\ell_{ij} = \begin{cases}
		0 &\text{if there is no line in } \recplot(\infty,\eps_0) \text{ starting at } (i,j),
		\\
		\ell &\text{if there is a line in } \recplot(\infty,\eps_0) \text{ starting at } (i,j) \text{ of length }\ell.
	\end{cases}
\end{equation*}
Clearly, for every $(i,j)\in[0,n)^2$, $\ell_{ij}^n\le \ell_{ij}$ and
\begin{equation}\label{EQ:len_ij}
	\begin{split}
		&\ell_{ij}^n = \ell_{ij}
		\quad\text{for every line }(i,j,\ell_{ij}^n) \text{ in } \recplot(n,\eps_0)
		\text{ which is not }n\text{-boundary},	
		\\
		&\ell_{ij}^n < \ell_{ij}	
		\quad\text{implies }(i,j,\ell_{ij}^n) \text{ is an }n\text{-boundary line in }\recplot(n,\eps_0).
	\end{split}
\end{equation}
For $n\ge 2$ and $\ell\in\NNN$,
let $b_\ell(n)$ denote the number of $n$-boundary lines $(i,j,\ell_{ij}^n)$ in $\recplot(n,\eps_0)$ such that $\ell_{ij}=\ell$. That is, for $\ell<n$, 
$$ 
b_\ell(n) = \card\Big(
(\kkk_\ell\sqcup\kkk_\ell^0) 
\cap 
\big([0, n)^2 \backslash [0, n-\ell)^2 \big)
\Big).
$$

\begin{proposition}[Recurrences in $n$-boundary lines]\label{PROP:recur-in-n-bound}
For every $\ell\ge 1$ and $n\ge 2$,
\begin{equation}\label{EQ:RR-and-densK:num_nboundary}
	\ell b_\ell(n) < 8\recog n.
\end{equation}
\end{proposition}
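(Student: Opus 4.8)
The plan is to bound $\ell b_\ell(n)$ by separately estimating the contribution of inner lines and of $0$-boundary lines to the strip $[0,n)^2\setminus[0,n-\ell)^2$, and within each case to split according to whether $\ell<\recog$ or $\ell\ge\recog$. The geometric picture is simple: $b_\ell(n)$ counts starting points $(i,j)$ lying in an L-shaped strip of width $\ell$ along the right and top edges of $[0,n)^2$, and we want to show the total number of recurrences carried by these lines, namely $\ell b_\ell(n)$, is $O(\recog\, n)$. The key leverage is the density identity $\dens(\iii(P))=\mu([awb])\mu([\bar aw\bar b])$ from Theorem~\ref{THM:density}, together with the scaling relations in Corollary~\ref{COR:lenghts} and Corollary~\ref{COR:lenghts-0-boundary}, which let us pull the count for large $\ell$ back to a count for some $\ell_0<\recog$.

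First I would handle the short-length case $1\le\ell<\recog$. Here $b_\ell(n)\le \card\big(([0,n)^2\setminus[0,n-\ell)^2)\big)=n^2-(n-\ell)^2<2\ell n$, so trivially $\ell b_\ell(n)<2\ell^2 n<2\recog^2 n$; but this is too weak, so instead I would use that each $n$-boundary line with $\ell_{ij}=\ell$ has its starting point in one of $\ell$ rows or $\ell$ columns adjacent to the boundary, and in each such row (fixing $i$, varying $j$, or vice versa) the starting points of $\ell$-lines form a subset of $\{j:\sigma^j(x)\in[\bar aw\bar b]\}$ over finitely many patterns $P$; one then uses unique ergodicity (the word-frequency bound) to say each such row contributes $O(1)$ points as $n\to\infty$. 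Actually, a cleaner route: for fixed $\ell<\recog$ the set $\kkk_\ell\sqcup\kkk_\ell^0$ has density $\dens(\kkk_\ell)+0$ (the $0$-boundary part is a density-zero subset of $\NNN_0^2$), so $\card\big((\kkk_\ell\sqcup\kkk_\ell^0)\cap([0,n)^2\setminus[0,n-\ell)^2)\big)\le \dens(\kkk_\ell)\cdot(n^2-(n-\ell)^2)(1+o(1))+\text{(boundary terms)}$. Since each $\iii(P)$ with $\abs P=\ell$ is a product set whose intersection with a strip of width $\ell$ along one coordinate has size $\le \ell\cdot n\cdot(\dens(\iii(P))^{1/2}+o(1))$ — this is where the product structure $\iii(P)=A\times B+1$ with $\dens(A)=\mu([awb])$, $\dens(B)=\mu([\bar aw\bar b])$ is essential — summing over the finitely many $P$ of length $\ell$ gives $b_\ell(n)\le C n$ with $C$ independent of $\ell$, hence $\ell b_\ell(n)\le C\recog n$.

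For the main case $\ell\ge\recog$, I would use Corollary~\ref{COR:lenghts}: there are unique $\ell_0<\recog$ and $k\ge1$ with $\ell=q^k\ell_0+(\alpha+\beta)\frac{q^k-1}{q-1}$ and $\kkk_\ell=q^k\kkk_{\ell_0}-\beta\frac{q^k-1}{q-1}$ (analogously for $\kkk_\ell^0$ with $\beta$ replaced by $0$). So an $n$-boundary $\ell$-line with starting point $(i,j)$ corresponds to a point $(i_0,j_0)\in\kkk_{\ell_0}$ (or $\kkk_{\ell_0}^0$) with $i=q^k i_0-\beta\frac{q^k-1}{q-1}$, etc.; the constraint $(i,j)\in[0,n)^2\setminus[0,n-\ell)^2$ forces $(i_0,j_0)$ into $[0,m)^2\setminus[0,m-\ell_0')^2$ for an appropriate $m\approx n/q^k$ and $\ell_0'\le\ell_0<\recog$. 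Hence $b_\ell(n)\le b_{\ell_0}(m)+(\text{a bounded correction})$, and since $b_{\ell_0}(m)\le C m\le Cn/q^k$ by the short-length case, we get $\ell b_\ell(n)\le \ell\cdot Cn/q^k\le \recog q^k\cdot Cn/q^k=C\recog n$ using $\ell<\recog q^k$ (proved as in Proposition~\ref{PROP:maximal-number-of-lengths}). Choosing the constants so that $C\le 8$ — which just requires being a little careful with the $o(1)$ terms and the boundary corrections — yields $\ell b_\ell(n)<8\recog n$.

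The main obstacle is making the counting in the short-length case genuinely uniform in $\ell$: one must argue that the "L-strip" of width $\ell$ meets each of the finitely many product sets $\iii(P)$ ($\abs P=\ell$) in at most roughly $\ell\cdot n\cdot\max(\mu([awb]),\mu([\bar aw\bar b]))$ points, and then sum $\sum_{\abs P=\ell}$ of these — which collapses because $\sum_{\abs P=\ell}\mu([awb])\le 1$ and $\sum_{\abs P=\ell}\mu([\bar aw\bar b])\le 1$ (the cylinders $[awb]$ over inner line-patterns of a fixed length are pairwise disjoint). That disjointness, plus the analogous bookkeeping for $0$-boundary patterns and a clean treatment of the finitely-many-$n$ exceptional regime, is the only place real care is needed; everything else is the scaling machinery already established in the paper.
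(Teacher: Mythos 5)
There is a genuine gap, and it sits in the place your whole argument ultimately rests on: the base case $\ell<\recog$. You need a bound of the form $b_{\ell_0}(m)\le Cm$ with $C$ an absolute constant, uniformly in $m\ge 2$ and in $\ell_0<\recog$, both to finish the short-length case and to feed the pullback in the case $\ell\ge\recog$ (with only the trivial area bound $b_{\ell_0}(m)\le 2\ell_0 m$ the pullback gives $\ell b_\ell(n)\lesssim 2\recog\ell_0 n\le 2\recog^2 n$, which is exactly the ``too weak'' bound you started from). Your density argument does not deliver this for two reasons. First, the computation as written does not collapse to $Cn$: the intersection of $\iii(P)=(A+1)\times(B+1)$ with the vertical strip $[n-\ell,n)\times[0,n)$ has size $\abs{(A+1)\cap[n-\ell,n)}\cdot\abs{(B+1)\cap[0,n)}\lesssim \mu([awb])\,\ell\cdot\mu([\bar aw\bar b])\,n$, and summing over the patterns of length $\ell$ using $\sum_P\mu([awb])\le 1$ and $\sum_P\mu([\bar aw\bar b])\le 1$ only yields $\sum_P\mu([awb])\mu([\bar aw\bar b])\le 1$, i.e.\ $b_\ell(n)\lesssim \ell n$ --- the same order as the trivial bound. (One would need $\dens(\kkk_\ell)\le 1/\ell$, which is true but for a different reason than the one you give.) Second, even granting the asymptotics, unique ergodicity only controls $\card(A\cap[0,n))$ up to $(1+o(1))$ factors, so no inequality valid for \emph{every} $n\ge 2$ comes out without additional uniform error estimates; the proposition is a clean inequality for all $n$, not an asymptotic statement.

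The missing idea is elementary and is what the paper uses: every $n$-boundary line $(i,j,\ell_{ij}^n)$ contains a recurrence with one coordinate equal to $n-1$, and distinct lines are disjoint as sets of recurrences, so $b_\ell(n)\le 2n-1$ for \emph{every} $\ell$ and $n$; this settles $\ell<\recog$ at once. For $\ell\ge\recog$ the paper sharpens this same count: by Corollary~\ref{COR:lenghts} all starting points in $\kkk_\ell$ are congruent modulo $q^k$ and their pairwise differences are multiples of $q^k$, so the boundary recurrences $(g,n-1)$ belonging to distinct $\ell$-lines satisfy $\abs{g-g'}\ge q^k$, whence $\card B^\imath\le (n-2)/q^k+1\le\recog(n-2)/\ell+1$, and similarly for the $0$-boundary part. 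Your pullback to $[0,m)^2$ with $m\approx n/q^k$ is essentially this separation argument viewed from the other end and is fine in principle, but it inherits the unproved base-case bound; replace the density argument by the recurrence-ownership count and the proof closes with the stated constant.
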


\begin{proof}
Since \eqref{EQ:RR-and-densK:num_nboundary} is trivially true for $\ell<\recog$, we may assume that
$\ell\ge\recog$.
By definition, every $n$-boundary line $(i,j,\ell_{ij}^n)$ contains a recurrence with
one coordinate equal to $n-1$; that is, there is $h\in[0,\ell_{ij}^n)$ such that
either $i+h=n-1$ or $j+h=n-1$
(the former case occurs when $i>j$, the latter one when $i<j$).
Further, every recurrence belongs to exactly one line.
Thus, by symmetry of recurrence plots,
\begin{equation}\label{EQ:RR-and-densK:b(n)2}
	b_\ell(n)=2 (\card B^\imath + \card B^0),
\end{equation}
where
\begin{eqnarray*}
	B^\imath &=& \{
	g\in[0,n-1)\colon
	(g,n-1) \text{ belongs to a line starting in } \kkk_\ell
	\},
	\\
	B^0 &=& \{
	g\in[0,n-1)\colon
	(g,n-1) \text{ belongs to a line starting in } \kkk_\ell^0
	\}.
\end{eqnarray*}

We prove that $\card B^\imath \le \recog(n-2)/\ell + 1$.
If $\kkk_\ell = \emptyset$ then $B^\imath=\emptyset$ and 
there is nothing to prove.
So assume that $\kkk_\ell \neq \emptyset$.
Then, by Corollary~\ref{COR:lenghts}, there are
$k \geq 1$ and $\ell_0 < \recog$ such that
$\kkk_\ell=q^k\kkk_{\ell_0}-\beta(q^k-1)/(q-1)$ and
$\ell = q^k(\ell_0 + c)-  c < \recog q^k$, where $c=(\alpha+\beta)/(q-1)$. 
Thus, for every $(i, j), (i', j')\in\kkk_\ell$,
\begin{equation}\label{EQ:RR-and-densK:b(n)-multiples}
	\abs{i-i'}, \abs{j - j'}, \abs{i-j} \text{ and }\abs{i'-j'}
	\quad\text{are multiples of }q^k.
\end{equation}
Take any $g<g'$ from $B^\imath$ and let $(i,j)$ and $(i', j') \in \kkk_\ell$
be starting points of $\ell$-lines in $\recplot(\infty,\eps_0)$ 
containing $(g,n-1)$ and $(g', n-1)$, respectively;
clearly, $i <j$ and $i' < j'$.
So there are $h$ and $h'$ from $[0, \ell)$
such that $j+h = n-1$ and $j' + h' = n-1$, respectively.
Further, by \eqref{EQ:RR-and-densK:b(n)-multiples},
there are $p, p' \geq 1$ such that $i = j-pq^k$ and $i' = j'-p'q^k$.
Since the lines starting at $(i,j)$ and $(i', j')$ are different hence do not have a common recurrence,
we have $p \neq p'$.
This gives
\begin{equation*}
	\abs{g-g'}
	=
	\abs{ (i+h) - (i'+h')} 
	= 
	\abs{p'-p} q^k 
	\geq q^k.
\end{equation*}
Since this is true for every different $g,g'\in B^\imath\subseteq[0,n-2]$, we immediately
have $\card B^\imath \le q^{-k}(n-2) + 1$.
Thus, using the facts that $q^k=(\ell+c)/(\ell_0+c)$, $\ell_0<\recog$ and $c\in[0,1]$,
\begin{equation*}
	\card B^\imath 
	\le 
	\frac{(\ell_0+c)(n-2)}{\ell+c} + 1
	\le
	\frac{\recog(n-2)}{\ell} + 1.
\end{equation*}

The proof of the fact that $\card B^0\le \recog(n-2)/\ell + 1$ is analogous, just instead of 
Corollary~\ref{COR:lenghts} one uses Corollary~\ref{COR:lenghts-0-boundary}.
Hence, by \eqref{EQ:RR-and-densK:b(n)2} and the fact that $\ell<\recog n$
by Propositions~\ref{PROP:maximal-number-of-lengths} and \ref{PROP:maximal-number-of-lengths-for-0-boundary},
$\ell b_\ell(n)	\le 	4\recog(n-2) + 4\ell	< 8\recog n$.
\end{proof}

\section{Algorithm for determining densities $\dens(\kkk_{\ell})$ for primitive substitutions}\label{S:densities}

In this section, we briefly discuss the algorithm for determining densities of the sets $\kkk_{\ell}$ for primitive substitutions and illustrate it on two examples.
A detailed algorithm can be found in Appendix~\ref{APP:algorithm}.

As a first step, we verify the assumptions for the substitution.
Then, we identify all allowed $2$-words ($\lang_2$).
Next, we proceed to identify all allowed $\ell$-words ($\lang_\ell$) 
for $\ell = 3, 4, \dots$ until
we reach a point where all allowed $\ell$-words are recognizable.
At this point,
we define $\recog = \max\{\ell, \alpha+\beta+1\}$
and then proceed to identify all allowed $(\recog+1)$-words.

Subsequently, for $\ell_0=1,2,\dots,\recog-1$, we determine all inner and $0$-boundary line-patterns.
For each line-pattern $\linepattern{a}{w}{b}$ with $\abs{w}<\recog$,
we calculate the measures $\mu([awb])$ and $\mu([\bar{a} w \bar{b}])$ using the algorithm
described in \cite[Section 5.4]{queffelec2010substitution}.

Finally, we compute the densities $\dens(\kkk_{\ell_0})$ for $1\le \ell_0<\recog$ using Theorem~\ref{THM:density}(\ref{Case2-in-THM:density}),
and $\dens(\kkk_{\ell})$ for $\ell\ge \recog$ using Theorem~\ref{THM:density}(\ref{Case3-in-THM:density}).

\begin{example}[Densities $\dens(\kkk_{\ell})$ for the Thue-Morse substitution]\label{example:dens-the-morse}
Let $\zeta$ be the Thue-Morse substitution $\zeta(0)=01$ and $\zeta(1)=10$.
As mentioned in Example~\ref{EXAM:def-recog}, $\recog=4$
since every allowed $4$-word is recognizable and $\alpha=\beta=0$.
All allowed words $w$ of length $3\le\ell\le \recog+1$ are (see also \cite[pp.~145--146]{queffelec2010substitution}):
\begin{itemize}
	\item 
	$001$, $010$, $011$, $100$, $101$, $110$; 
	\begin{itemize}
		\item[$\circ$] 
		$\mu([w])$ is always $1/6$;
	\end{itemize}
	\item 
	$0010$, $0011$, $0100$, $0101$, $0110$, $1001$, $1010$, $1011$, $1100$, $1101$; 
	\begin{itemize}
		\item[$\circ$] 
			$\mu([w])=1/6$ if $w\in\{0110,1001\}$;
		\item[$\circ$] 
			$\mu([w])=1/12$ otherwise;
	\end{itemize}
	\item
	$00101$, $00110$, $01001$, $01011$, $01100$, $01101$
	$10010$, $10011$, $10100$, $10110$, $11001$, $11010$; 
	\begin{itemize}
		\item[$\circ$] 
		$\mu([w])$ is always $1/12$.
	\end{itemize}
\end{itemize}
We have $8$ inner line-patterns $P=\linepattern{a}{w}{b}$ with $a=0$ and length $\abs{P}<\recog$:
\begin{itemize}
	\item 
	$\linepattern{0}{0}{1}$, $\linepattern{0}{1}{1}$;\quad
	$\linepattern{0}{01}{0}$, $\linepattern{0}{01}{1}$, $\linepattern{0}{10}{0}$,  $\linepattern{0}{10}{1}$;\quad
	$\linepattern{0}{010}{1}$, $\linepattern{0}{101}{1}$;
\end{itemize}
the densities of $\iii(P)$ equal $1/36$ for the first two patterns and $1/144$ for the remaining ones.
Note that this substitution is of type (\textendash+), see Lemma~\ref{LMM:on-induced-separators}.

Thus, by Theorem~\ref{THM:density}(\ref{Case2-in-THM:density}), densities of the sets $\kkk_\ell$ for $\ell < \recog$ are
\begin{equation*}
	\dens(\kkk_1)=\frac19,
	\quad
	\dens(\kkk_{2})=\frac1{18},
	\quad
	\dens(\kkk_{3})=\frac1{36}.
\end{equation*}
Finally, by Theorem~\ref{THM:density}(\ref{Case3-in-THM:density}),   densities of nonempty sets $\kkk_\ell$ are
\begin{equation*}
	\dens(\kkk_{2^{k+1}})=\frac1{18\cdot 2^{2k}},
	\quad
	\dens(\kkk_{3\cdot 2^{k}})=\frac1{36\cdot 2^{2k}}
\end{equation*}
for every $k\in\NNN$.

\end{example}

\begin{example}[Densities $\dens(\kkk_{\ell})$ for a substitution of length $5$]
Let $\zeta$ be the substitution given by $\zeta(0)=01110$ and $\zeta(1)=01010$; then $\recog=5$
since every allowed $4$-word is recognizable (see Example~\ref{EXAM:def-recog}) and $\alpha=\beta=2$.
Allowed words $w$ of length $3\le\ell\le \recog+1$ are:
\begin{itemize}
	\item 
	$001$, $010$, $011$, $100$, $101$, $110$, $111$; 
	\begin{itemize}
		\item[$\circ$] 
			$\mu([w])=1/5$ if $w\in\{001,010,100\}$;
		\item[$\circ$] 
			$\mu([w])=1/10$ otherwise;
	\end{itemize}
	\item 
	$0010$, $0011$, $0100$, $0101$, $0111$, $1001$, $1010$, $1100$, $1110$; 
	\begin{itemize}
		\item[$\circ$] $\mu([w])=1/5$ if $w=1001$;
		\item[$\circ$] $\mu([w])=1/10$ otherwise;
	\end{itemize}
	\item 
	$00101$, $00111$, $01001$, $01010$, $01110$, $10010$, $10011$, $10100$, $11001$, $11100$; 
	\begin{itemize}
		\item[$\circ$] $\mu([w])$ is always $1/10$;
	\end{itemize}
	\item 
	$001010$, $001110$, $010010$,  $010011$, $010100$, $011100$, $100101$, $100111$, $101001$, $110010$, $110011$, $111001$; 
	\begin{itemize}
		\item[$\circ$] 
		$\mu([w])=1/25$ if $w\in\{010010, 110011\}$;
		\item[$\circ$] 
		$\mu([w])=3/50$ if $w\in\{010011, 110010\}$;
		\item[$\circ$] 
		$\mu([w])=1/10$ otherwise.
	\end{itemize}
\end{itemize}
We have $10$ inner line-patterns $P=\linepattern{a}{w}{b}$ with $a=0$ and length $\abs{P}<\recog$:
\begin{itemize}
	\item 
	$\linepattern{0}{0}{1}$, $\linepattern{0}{1}{0}$, $\linepattern{0}{1}{1}$;
	\quad
	$\linepattern{0}{01}{1}$, $\linepattern{0}{10}{1}$, $\linepattern{0}{11}{1}$;
	\quad
	$\linepattern{0}{010}{1}$;
	\quad
	$\linepattern{0}{1001}{0}$, $\linepattern{0}{1001}{1}$;
\end{itemize}
the densities of the corresponding sets $\iii(P)$ are, respectively, 
\begin{itemize}
	\item 
	$1/25$, $1/100$, $1/50$;
	\quad
	$1/100$, $1/100$, $1/100$;
	\quad
	$1/100$;
	\quad
	$1/625$, $9/2500$.
\end{itemize}

Hence, the densities of nonempty sets $\kkk_\ell$ are:
\begin{equation*}
	\begin{split}
		&\dens(\kkk_{2\cdot 5^k-1})=\frac7{50\cdot 5^{2k}}\,,
		\quad
		\dens(\kkk_{3\cdot 5^k-1})=\frac3{50\cdot 5^{2k}}\,,
		\\	
		&\dens(\kkk_{4\cdot 5^k-1})=\frac1{50\cdot 5^{2k}}\,,
		\quad
		\dens(\kkk_{5\cdot 5^k-1})=\frac{13}{1250\cdot 5^{2k}}
	\end{split}
\end{equation*}
for every $k\in\NNN_0$. 
\end{example}

\section{Non-primitive binary substitutions}\label{S:non-primitive}
Now we will demonstrate the crucial role of the assumption of primitivity in our previous results.
For primitive substitutions, 
we have observed that the set of possible lengths of lines in the recurrence plot
is relatively small (has zero density in $\NNN$) and, for every possible length $\ell$,
the set $\kkk_\ell$ of starting points of $\ell$-lines has positive density in $\NNN_0^2$.
However, if we omit the assumption of primitivity, 
lines of every possible length $\ell$ appear in the recurrence plot, 
and the set $\kkk_\ell$ has always zero density.

In this section, consider a binary substitution $\zeta$ of constant length $q\ge 2$, 
which satisfies \eqref{assumpt-start-with-0} but not 
\eqref{assumpt-both-0-and-1-in-zeta-k}, and is such that the unique fixed point $x = \zeta^\infty(0)$ of
$\zeta$ starting with $0$ is not eventually $\sigma$-periodic.
That is, by \cite{seebold1988periodicity},
$\zeta$ is a binary substitution of constant length $q$ such that
\begin{itemize}
\item $\zeta(0)$ starts with $0$, and contains $1$ and at least one additional $0$;
\item $\zeta(1)=1^q$.
\end{itemize}
Clearly, $\zeta$ is not recognizable, since every word $1^\ell$ is not recognizable.

We define line-patterns and sets $\kkk_\ell$ as in Definition~\ref{DEF:line-pattern} and \eqref{EQ:Kell}, respectively.
Let $z\in[2,q)$ be the number of zeros in $\zeta(0)$. Then, for every $k$, $\zeta^k(0) $ contains exactly $z^k$ zeros.
Trivially, $\zeta^k(1)$ contains no zeros.
For $v \in \AAa^*$, put
$$ 
	N_v = \{ j \in \NNN : x_{[j, j+|v|)} = v\}.
$$ 

\begin{lemma}\label{LMM:nonperiodic}
	For every integer $\ell > 0$, put $w = 1^\ell$, $a=0$ and $b=1$.
	Then $\linepattern{a}{w}{b}$ is an inner line-pattern with infinite $\indexoflines{w}{a}{b}$.
	Consequently, the set $\kkk_\ell$ is infinite for every $\ell\in\NNN$.
\end{lemma}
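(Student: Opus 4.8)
The plan is to exhibit, for each $\ell$, a single inner line-pattern of length $\ell$ whose index set $\iii$ is already infinite; then infinitude of $\kkk_\ell$ follows from Proposition~\ref{PROP:lines-and-patterns} (every pair in $\iii(\linepattern{a}{w}{b})$ gives a distinct inner line $(i,j,\ell)$). The natural candidate is $w=1^\ell$ with separators $a=0$, $b=1$: since $\zeta(1)=1^q$, blocks of $1$'s in $x$ come from maximal runs in the preimage, but because $\zeta(0)$ itself contains a $1$, we get runs of $1$'s bordered on the left by a $0$; and since $\zeta(0)$ contains an extra $0$, these runs are genuinely bounded on the right by a $0$ too, producing the pattern $\linepattern{0}{1^\ell}{1}$ together with its mirror $\linepattern{1}{1^\ell}{0}$.

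First I would pin down, combinatorially, how runs of $1$'s of a prescribed length arise. Write $\zeta(0)=0u$ where $u\in\{0,1\}^{q-1}$ contains at least one $1$ and at least one $0$. Applying $\zeta$ repeatedly, $x=0\,\zeta(u)\,\zeta^2(u)\dots$, and a run of $1$'s in $x$ is obtained by taking a maximal run $1^m$ somewhere inside $x$ at the level of $y$ (the "desubstituted" sequence), flanked by $0$'s, and mapping it by $\zeta$: the image is a string of $1$'s of length $mq$ flanked by $\zeta(0)$'s on each side, and since $\zeta(0)$ starts with $0$ and ends with some letter while $\zeta(1)=1^q$, one computes the exact length of the resulting maximal $1$-run. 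The key point is that as $m$ ranges over $\NNN$ and we also allow the flanking $0$'s to contribute (a run $0\,1^m\,0$ maps under $\zeta$ to something containing $1^{mq+r}$ for a fixed small $r$ depending only on the position of the first/last $1$ inside $\zeta(0)$), the set of lengths of maximal $1$-runs occurring in $x$ is cofinite, or at least unbounded with bounded gaps — and in any case contains every sufficiently large $\ell$; the small remaining values of $\ell$ are handled directly. Actually the cleanest route: show first that $x$ contains arbitrarily long runs of $1$'s (immediate, since $\zeta^k(1)=1^q$ appears and lengths blow up), hence for every $\ell$ there is a maximal run of length exactly $\ell' \ge \ell$ bordered by $0$'s; then inside that run the shorter string $1^\ell$ occurs with a $0$ to its left or a $1$ to its right as needed. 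More carefully, to get the exact pattern $\linepattern{0}{1^\ell}{1}$ one needs a maximal run of length $\ge \ell+1$; to get its mirror $\linepattern{1}{1^\ell}{0}$ one needs the same; and to then conclude $\iii$ is infinite one uses that such long maximal runs occur infinitely often (each level $\zeta^k(1)$ inside $x$ sits inside a maximal run, and these occur at infinitely many positions because the letter $1$ occurs infinitely often in $y$, as $x$ is not eventually $\sigma$-periodic and $\zeta(0)$ contains a $1$).

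So the key steps, in order, are: (1) from $\zeta(1)=1^q$ and the structure of $x=\zeta^\infty(0)$, show that maximal runs of $1$'s of every sufficiently large length occur, and moreover occur infinitely often, with at least one $0$ immediately before and after; (2) deduce that for such a run one may read off an occurrence of $0\,1^\ell\,1$ and, from a different run (or the same one), an occurrence of $1\,1^\ell\,0$, so that $\linepattern{0}{1^\ell}{1}$ is an inner line-pattern with infinite $\iii$; (3) handle the finitely many small $\ell$ below the threshold by a direct inspection of $x$ (or observe the argument in fact works for all $\ell\ge 1$ once one notes that, because $\zeta(0)$ has both a $1$ and an extra $0$, the string $0\,1\,0$ or $0\,1\,1\,0$ already appears, seeding runs of every length via the substitution); (4) conclude $\kkk_\ell$ is infinite via Proposition~\ref{PROP:lines-and-patterns}, since distinct pairs $(i,j)\in\iii(\linepattern{0}{1^\ell}{1})$ yield distinct inner lines $(i,j,\ell)$ in $\recplot(\infty,\eps_0)$.

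The main obstacle I anticipate is bookkeeping in step~(1): one must ensure the run of $1$'s is \emph{maximal} (flanked by genuine $0$'s, not merely by letters that might again be $1$), which is exactly where the hypothesis that $\zeta(0)$ contains an \emph{additional} $0$ beyond its leading one is used — without it, $\zeta(0)$ could be $01^{q-1}$ and then images of runs would merge with neighbouring $\zeta(0)$-runs in a way that changes the achievable lengths. Getting the precise arithmetic of run-lengths right (as a function of where the first and last $1$ sit inside $\zeta(0)$) is routine but must be stated cleanly; I would phrase it as: if $0\,1^m\,0$ occurs in $y$ then $x$ contains a maximal run of $1$'s of length $mq + \mathtt{(trailing 1's of }\zeta(0)\mathtt{)} + \mathtt{(leading 1's after the 0 of }\zeta(0)\mathtt{)}$... — but since the paper only needs \emph{infinitely many} occurrences and \emph{all} lengths $\ell$, it suffices to combine "arbitrarily long maximal runs exist" with "every maximal run of length $L$ contains, for each $\ell< L$, an occurrence of both $0\,1^\ell\,1$ and $1\,1^\ell\,0$", plus "maximal runs recur infinitely often", avoiding the exact formula altogether.
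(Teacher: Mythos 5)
Your proposal is correct and follows essentially the same route as the paper's proof: fix $k$ with $q^k>\ell$, use the infinitely many occurrences of $\zeta^k(1)=1^{q^k}$ in $x=\zeta^k(x)$ to obtain infinitely many maximal runs of $1$'s of length at least $\ell+1$ flanked by $0$'s, and read off $0\,1^{\ell}\,1$ at the left end and $1^{\ell+1}\,0$ at the right end of each such run. If anything, your insistence on runs of length \emph{at least} $\ell+1$ (rather than exactly $q^k$, as the paper's proof literally asserts) is the more robust formulation, since maximal runs of length exactly $q^k$ need not exist (e.g.\ for $\zeta(0)=0011$, $\zeta(1)=1111$ every maximal run of $1$'s has length $\equiv 2 \pmod 4$).
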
 

\begin{proof}
	Fix $k \in \NNN$ such that $q^k > \ell$ and put $l = q^k$.
	There exists infinitely many $j$ such that $x_{[j, j+l)} = 1^{l}$,
	$x_{j-1} = x_{j+l} = 0$.
	For every such $j$ we have $j\in N_{awb}$ and $j+l-\ell\in N_{\bar{a}w\bar{b}}$,
	thus $(j+1, j+l-\ell+1) \in \indexoflines{w}{a}{b}$.
\end{proof}

\begin{example}
	The substitution $\zeta$ given by $\zeta(0) = 010, \zeta(1) = 111$
	has two other kinds of inner line-patterns $\linepattern{a}{w}{b}$, 
	not mentioned in Lemma~\ref{LMM:nonperiodic}:
	\begin{itemize}
		\item $w = \zeta^k(101)$, $a=0$, $b=1$ for every $k \geq 0$,
		\item $w = \zeta^k(1)$, $a=b=0$ for every $k \geq 0$,
	\end{itemize}
	and their mirror images.
\end{example}

\begin{lemma}
The density of zeros in $x$ is $\dens(N_0) = 0$.
\end{lemma}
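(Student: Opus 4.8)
The plan is to exploit the fact, already recorded just above the statement, that $\zeta^k(0)$ has length $q^k$ and contains exactly $z^k$ zeros, combined with $2\le z<q$, so that the proportion of zeros in the prefix $\zeta^k(0)$ of $x$ is $(z/q)^k$, which tends to $0$. The only real work is to pass from this subsequence of lengths $q^k$ to an arbitrary length $n$, and for that a monotonicity sandwich suffices.

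First I would observe that, since $x=\zeta^\infty(0)$ and $\zeta(x)=x$, iterating gives $x=\zeta^k(x)=\zeta^k(x_0)\zeta^k(x_1)\cdots$, so the prefix $x_{[0,q^k)}$ equals $\zeta^k(0)$ for every $k\in\NNN_0$; hence $\card\{i\in[0,q^k)\colon x_i=0\}=z^k$. The counting function $n\mapsto\card\{i\in[0,n)\colon x_i=0\}$ is nondecreasing, so for any $n\ge 1$, choosing $k\in\NNN_0$ with $q^k\le n<q^{k+1}$ yields
\begin{equation*}
	\card\big(N_0\cap[0,n)\big)
	\ \le\
	\card\{i\in[0,q^{k+1})\colon x_i=0\}
	\ =\ z^{k+1}.
\end{equation*}

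Next I would divide by $n\ge q^k$ to get $\tfrac1n\,\card(N_0\cap[0,n))\le z^{k+1}/q^k=z\,(z/q)^k$. Since $2\le z<q$ we have $0<z/q<1$ and $z$ is a fixed constant, so $z\,(z/q)^k\to 0$ as $k\to\infty$; moreover $k=k(n)\to\infty$ as $n\to\infty$. Therefore $\bar\dens(N_0)=\limsup_{n\to\infty}\tfrac1n\,\card(N_0\cap[0,n))=0$, and as lower density is nonnegative, $\dens(N_0)$ exists and equals $0$.

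I do not expect a genuine obstacle: the statement is essentially a consequence of the bookkeeping identity for the number of zeros in $\zeta^k(0)$ together with the standing assumption that $\zeta(0)$ contains at least one $1$ (i.e. $z<q$), which is precisely what forces the geometric ratio $(z/q)^k$ to vanish; the single point needing care is the monotonicity argument used to control $n$ between consecutive powers of $q$, which is handled by the displayed inequality above.
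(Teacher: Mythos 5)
Your proof is correct and rests on the same core idea as the paper's: the block $\zeta^k(0)$ of length $q^k$ contains exactly $z^k$ zeros with $z<q$, so the zero-density decays geometrically. The only difference is bookkeeping — the paper writes $n$ in base $q$ and sums the zero counts over the resulting blocks, whereas you simply sandwich $n$ between consecutive powers of $q$; your version is a slight simplification but yields the same conclusion by the same mechanism.
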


\begin{proof}	
	First realize that, for every $k,j\in\NNN_0$, the number of zeros in $x_{[jq^k,(j+1)q^k)}$
	is either $z^k$ or $0$.
	Now fix any $n\in\NNN$  and write $n = \sum_{k=0}^{p} a_{p-k} q^{k}$, 
	where $p\ge 0$, $a_0 \neq 0$ and $a_{p-k}$ is an integer from $[0,q)$ for every $k$. 
	Then the number of zeros in $x_{[0, n)}$ is bounded from above by $m_n = \sum_k a_{p-k}z^k$.
	A trivial estimate yields
	$$
		\frac{m_n}{n} 
		\leq 
		\frac{\sum_{k=0}^{p} (q-1) z^k}{q^p} 
		\leq
		\frac{(q-1)((q-1)^{p+1}-1)}{(q-2)q^{p}} \to 0 \quad\text{as } n \to \infty,
	$$
	hence $\dens(N_0) = 0$.
\end{proof}

\begin{corollary}
	Let $v$ be a subword of $x$ that contains at least one $0$. Then 
	$$
		\dens(N_v) = 0.
	$$
\end{corollary}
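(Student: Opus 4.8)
The plan is to reduce to the previous lemma, which asserts $\dens(N_0)=0$, by exploiting that every occurrence of $v$ forces a $0$ at a fixed offset. Concretely, since $v$ contains at least one $0$, fix an index $m\in[0,\abs{v})$ with $v_m=0$. Then for every $j\in N_v$ we have $x_{[j,j+\abs{v})}=v$ and in particular $x_{j+m}=0$, so $j+m\in N_0$. Hence $N_v+m\subseteq N_0$.

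Next I would pass to densities. By monotonicity of upper density and the previous lemma, $\bar\dens(N_v+m)\le\bar\dens(N_0)=0$. It remains to observe that a constant shift does not change the upper density of a subset of $\NNN_0$: since $N_v\subseteq\NNN$, we have $(N_v+m)\cap[0,n)=\big(N_v\cap[0,n-m)\big)+m$ for all $n>m$, whence $\card\big((N_v+m)\cap[0,n)\big)=\card\big(N_v\cap[0,n-m)\big)$, and dividing by $n$ and letting $n\to\infty$ gives $\bar\dens(N_v+m)=\bar\dens(N_v)$. Therefore $\bar\dens(N_v)=0$, and since lower density is nonnegative, $\dens(N_v)$ exists and equals $0$.

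There is no real obstacle here; the only point requiring a line of care is the shift-invariance of upper density, which is immediate from the displayed identity above (alternatively one may invoke \eqref{EQ:density-props} with $a=1$ once it is known that the relevant density exists, but the direct estimate avoids any circularity). Everything else is monotonicity of $\bar\dens$ together with the already established $\dens(N_0)=0$.
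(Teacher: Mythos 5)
Your argument is correct and is essentially the paper's own proof: both fix a position $m$ (the paper calls it $k$) with $v_m=0$, observe that every occurrence of $v$ forces an occurrence of $0$ at offset $m$, so $N_v$ is contained in a shift of $N_0$, and conclude from $\dens(N_0)=0$. Your write-up just spells out the shift-invariance of upper density, which the paper leaves implicit.
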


\begin{proof}
	By the assumption, there is $k \in [0, |v|)$ such that $v_k = 0$.
	Then $N_v \subseteq k + N_0$.
\end{proof}

\begin{proof}[Proof of Theorem~\ref{THM:nonprimit}]
Trivially, for a given length $\ell$, the number of different inner line-patterns of length $\ell$ is finite. For any inner line-pattern $\linepattern{a}{w}{b}$, exactly two out of four separators $a$, $b$, $\bar{a}$, $\bar{b}$ are zeros;
thus at least one of $awb$ and $\bar{a}w\bar{b}$ contains $0$. 
Since $\indexoflines{w}{a}{b}=(N_{awb}+1)\times(N_{\bar{a}w\bar{b}}+1)$,
the previous corollary and Lemma~\ref{LMM:nonperiodic} immediately imply Theorem~\ref{THM:nonprimit}.
\end{proof}

\section{Remaining uniform binary substitutions}\label{S:other-subs}

Now, we will deal with uniform binary substitutions which were not considered in the previous sections.

Assume first that $\zeta$ satisfies the assumption~\eqref{assumpt-start-with-0}. 
Let $x=\zeta^\infty(0)$ be the unique fixed point of $\zeta$ starting with $0$.
By \cite{seebold1988periodicity},
there are six mutually exclusive possibilities:
\begin{enumerate}
	\item \label{ITEM:case-non-injective}
	$\zeta(0)=\zeta(1)$;
	
	\item \label{ITEM:case-periodic}
	$q$ is odd, $\zeta(0)=(01)^s0$ and $\zeta(1)=(10)^s1$, where $q=2s+1$;
	
	\item \label{ITEM:case-0000}
	$\zeta(0)=0^q\ne\zeta(1)$;
	
	\item \label{ITEM:case-0111}
	$\zeta(0)=01^{q-1}$ and $\zeta(1)=1^q$;
	
	\item \label{ITEM:case-nonprimit}
	$\zeta(0)$ contains $1$ and at least one additional $0$, $\zeta(1)=1^q$;
	
	\item \label{ITEM:case-primit}
	$\zeta$ satisfies both \eqref{assumpt-both-0-and-1-in-zeta-k} and \eqref{assumpt-aperiodic}.
\end{enumerate}
Clearly, in the first three cases $x$ is $\sigma$-periodic and in the fourth case $x=01^\infty$ is eventually $\sigma$-periodic;
in all these four cases, the recurrence plot $\recplot(\infty,\eps_0)$ has no line of finite length and has infinitely many lines of
infinite length. Case \eqref{ITEM:case-nonprimit} was described in Theorem~\ref{THM:nonprimit}
and Case \eqref{ITEM:case-primit} was described in Theorem~\ref{THM:density}.

\medskip

Now assume that a uniform binary substitution $\zeta$ does not satisfy the assumption~\eqref{assumpt-start-with-0}; then $\zeta$ does not have a fixed point
starting with $0$.
Two cases can occur:
\begin{enumerate}
	\item 
	$\zeta(1)$ starts with $1$;
	
	\item 
	$\zeta(1)$ starts with $0$.
\end{enumerate}
In the first case, we can simply exchange the symbols $0$ and $1$ and proceed as above
(that is, we consider the recurrence plot of the unique fixed point $x=\zeta^\infty(1)$ of $\zeta$).
In the second case, $\zeta$ has no fixed point, but has two periodic points with period $2$, one starting with $0$ 
(denote it by $x$) and the other one starting with $1$. 
However, the substitution $\zeta^2$ satisfies \eqref{assumpt-start-with-0} and $x$ is its unique fixed point starting with $0$.
Thus we simply replace the substitution $\zeta$ by $\zeta^2$ and continue as above.


\appendix

\section{Recognizable words and $1$-cuttings}\label{APP:1cutting}

\subsection{General substitutions}

In this appendix, $A$ is any alphabet (that is, a finite set of cardinality at least $2$) containing $0$,
and $\zeta$ is any (not necessarily uniform) substitution such that 
\begin{itemize}
\item $\zeta$ is non-erasing, that is, $\zeta(a)\ne\emptyword$ for every $a\in A$;
\item $\zeta$ is injective, see \eqref{assumpt-one-to-one};
\item $\zeta(0)$ starts with letter $0$, see \eqref{assumpt-start-with-0};
\item $\abs{\zeta(0)}\ge 2$.
\end{itemize}
Then clearly $\zeta^n(0)$ is a proper prefix of $\zeta^{n+1}(0)$ for every $n\in\NNN_0$,
hence there is a unique fixed point $x\in\Sigma=A^{\NNN_0}$ 
of $\zeta$ starting with $0$. Notice that
\begin{equation}\label{E1-increasing}
	E_1 = 
	\{
		0=\abs{\zeta(x_{[0,0)})}
		< \abs{\zeta(x_{[0,1)})}
		< \dots < \abs{\zeta(x_{[0,j)})}
		< \abs{\zeta(x_{[0,j+1)})} < \dots
	\}
\end{equation}

Recall Definition~\ref{DEF:1-cutting} from Subsection~\ref{SUBS:recognizability}.
If $\CCc=[s,v_0,\dots,v_{k-1},t]$ is a $1$-cutting at the index $i$ of $w$ and $j\in\NNN_0$ is such that
\eqref{LAB-1cutting-v}--\eqref{LAB-1cutting-E} from Definition~\ref{DEF:1-cutting} is true, we say that
$\CCc$ is a $1$-cutting at the index $i$ of $w$ (\emph{with} $j$). 
For every nonempty word $w\in\lang_\zeta$ put
\begin{equation*}
	N_w = \{
		i\in\NNN_0\colon  x_{[i,i+\abs{w})} = w
	\} \ne\emptyset.
\end{equation*}

\begin{lemma}\label{L:1cutting-props}
	Let $w\in\lang_\zeta$ be nonempty and $i\in N_w$. 
	Let $\CCc=[s,v_0,\dots,v_{k-1},t]$ be a $1$-cutting at the index $i$ of $w$ (with $j$);
	put $l=\abs{\zeta(x_{[0,j)})} - \abs{s}$. Then
	\begin{enumerate}
	\item\label{LAB:1cutting-props-zetajx1}
		$\abs{\zeta(x_{[0,j-1)})} < l$;
	\item\label{LAB:1cutting-props-zetaj}
		$\abs{\zeta(x_{[0,j)})} \le l+\abs{w}$ and the equality holds if and only if $w=s$;
	\item\label{LAB:1cutting-props-cutting}
		let $\CCc'=[s',v_0',\dots,v_{k'-1}',t']$ be another $1$-cutting at the index $i$ of $w$ (with $j'$);
		if $j=j'$ and $\abs{s}=\abs{s'}$, then $\CCc'=\CCc$.
	\end{enumerate}
\end{lemma}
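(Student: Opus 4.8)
The plan is to unwind Definition~\ref{DEF:1-cutting} and treat the three items in turn. Throughout, keep in mind that $l=\abs{\zeta(\word{x}{0}{j})}-\abs{s}$ is the position at which $w$ begins inside the canonical decomposition $x=\zeta(x_0)\zeta(x_1)\zeta(x_2)\dots$, whose block boundaries are precisely the increasing elements of $E_1$ listed in \eqref{E1-increasing}; when $j\ge1$ the consecutive members $\abs{\zeta(\word{x}{0}{j-1})}<\abs{\zeta(\word{x}{0}{j})}$ of $E_1$ delimit the block $\zeta(x_{j-1})$, a proper suffix of which is $s$. For item~\eqref{LAB:1cutting-props-zetajx1}: if $j=0$ then $s=\emptyword$ and $l=0$, and the inequality is degenerate, since there is no block to the left of index $0$; if $j\ge1$, then $s$ is a \emph{proper} suffix of $\zeta(x_{j-1})$ by item~\eqref{LAB-1cutting-st}, so $\abs{s}\le\abs{\zeta(x_{j-1})}-1$ and hence $l=\abs{\zeta(\word{x}{0}{j-1})}+\abs{\zeta(x_{j-1})}-\abs{s}\ge\abs{\zeta(\word{x}{0}{j-1})}+1$. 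For item~\eqref{LAB:1cutting-props-zetaj}: since $s$ is an initial segment of $w=sv_0\dots v_{k-1}t$ by item~\eqref{LAB-1cutting-w}, we get $l+\abs{w}-\abs{\zeta(\word{x}{0}{j})}=\abs{w}-\abs{s}\ge0$, which is the asserted inequality; equality forces $\abs{v_0\dots v_{k-1}t}=0$, and since $\zeta$ is non-erasing each $v_h=\zeta(x_{j+h})\ne\emptyword$, so $k=0$ and $t=\emptyword$, that is, $w=s$; the converse implication is immediate.

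For item~\eqref{LAB:1cutting-props-cutting} I would compare the two factorizations $w=sv_0\dots v_{k-1}t=s'v_0'\dots v_{k'-1}'t'$ of the single word $w$ provided by item~\eqref{LAB-1cutting-w} applied to $\CCc$ and to $\CCc'$. Equality of the lengths $\abs{s}=\abs{s'}$ of these prefixes of $w$ yields $s=s'$, while $j=j'$ together with item~\eqref{LAB-1cutting-v} gives $v_h=\zeta(x_{j+h})=v_h'$ for $h$ in the common range; cancelling the common prefix from both sides and assuming, say, $k\le k'$, leaves $t=\zeta(x_{j+k})\dots\zeta(x_{j+k'-1})\,t'$. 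If $k<k'$, the right-hand side begins with the nonempty word $\zeta(x_{j+k})$, so $\abs{t}\ge\abs{\zeta(x_{j+k})}$, contradicting that $t$ is a \emph{proper} prefix of $\zeta(x_{j+k})$ by item~\eqref{LAB-1cutting-st}. Hence $k=k'$, so $t=t'$, and therefore $\CCc'=\CCc$.

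None of these computations is deep. The only points that require genuine care are the bookkeeping of the boundary value $j=0$ in item~\eqref{LAB:1cutting-props-zetajx1} and of possibly empty components $v_h$, $s$, $t$ in the other two items; there it is precisely the non-erasing hypothesis on $\zeta$, together with the \emph{properness} of the suffix $s$ and of the prefix $t$ demanded by item~\eqref{LAB-1cutting-st}, that upgrades the obvious weak inequalities to the strict ones that are needed.
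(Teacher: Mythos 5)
Your proof is correct and follows essentially the same route as the paper, whose entire proof is the remark that item (1) follows from $s$ being a proper suffix of $\zeta(x_{j-1})$ while the rest is trivial; you have simply written out those details. Your explicit handling of the boundary case $j=0$ in item (1) (where the stated inequality degenerates to $0<0$, so an implicit assumption $j\ge 1$ is needed) is, if anything, more careful than the paper's own one-line argument.
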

\begin{proof}
\eqref{LAB:1cutting-props-zetajx1} follows from the fact that $s$ is a proper suffix of $\zeta(x_{j-1})$; 
the rest is trivial.
\end{proof}

\begin{lemma}\label{L:1cutting-E1i}
	Let $w\in\lang_\zeta$ be nonempty and $i\in N_w$. 
	Let $\CCc=[s,v_0,\dots,v_{k-1},t]$ be a $1$-cutting at the index $i$ of $w$ (with $j$). Then
	\begin{enumerate}
	\item\label{LAB:1cutting-E1i-empty}
		$E_1\cap [i,i+\abs{w}) = \emptyset$ if and only if $w=s$ (and so $k=0$, $s=w$ and $t=\emptyword$);		
	\item\label{LAB:1cutting-E1i-nonempty}
		if $E_1\cap [i,i+\abs{w}) \ne \emptyset$ then 
		$\min \big(E_1\cap [i,i+\abs{w})\big) = \abs{\zeta(x_{[0,j)})}=i+\abs{s}$.
	\end{enumerate}
\end{lemma}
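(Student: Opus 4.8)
The plan is to transport the question from the index $i$ to the ``block-aligned'' occurrence of $w$ at the index $l := \abs{\zeta(\word{x}{0}{j})} - \abs{s}$, where the positions of the cutting bars are completely transparent from the $1$-cutting data, and then pull the conclusions back via clause~\eqref{LAB-1cutting-E} of Definition~\ref{DEF:1-cutting}.

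I would begin by reading off $E_1 \cap [l, l+\abs{w})$ from clauses~\eqref{LAB-1cutting-w}--\eqref{LAB-1cutting-st}. These say $w = s v_0 \cdots v_{k-1} t = \word{x}{l}{l+\abs{w}}$, where $s$ is the length-$\abs{s}$ suffix of the block $\zeta(x_{j-1})$, each $v_h = \zeta(x_{j+h})$ is a whole block, and $t$ is a prefix of the block $\zeta(x_{j+k})$; hence the elements of $E_1$ lying in $[l, l+\abs{w})$ are exactly those $\abs{\zeta(\word{x}{0}{m})}$ with $j \le m \le j+k$ that fall in this interval. The smallest candidate is $\abs{\zeta(\word{x}{0}{j})} = l + \abs{s}$, and nothing of $E_1$ lies strictly below it inside $[l, l+\abs{w})$: by Lemma~\ref{L:1cutting-props}\eqref{LAB:1cutting-props-zetajx1} one has $\abs{\zeta(\word{x}{0}{j-1})} < l$, and $E_1$ is strictly increasing by~\eqref{E1-increasing}, so $E_1$ meets neither $(\abs{\zeta(\word{x}{0}{j-1})}, \abs{\zeta(\word{x}{0}{j})})$ nor, \emph{a fortiori}, $[l, l+\abs{s})$. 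Therefore $E_1 \cap [l, l+\abs{w}) = \emptyset$ precisely when $\abs{s} = \abs{w}$ (recall $\abs{s} \le \abs{w}$ always), and otherwise $\min\!\big(E_1 \cap [l, l+\abs{w})\big) = \abs{\zeta(\word{x}{0}{j})} = l + \abs{s}$. Finally, $\abs{s} = \abs{w}$ is equivalent to $w = s$, and since $w = s v_0 \cdots v_{k-1} t$ with every $v_h = \zeta(x_{j+h})$ nonempty ($\zeta$ being non-erasing), this also forces $k = 0$ and $t = \emptyword$ --- the parenthetical claim in~\eqref{LAB:1cutting-E1i-empty}.

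It then remains only to transport these facts along the translation in clause~\eqref{LAB-1cutting-E}: $E_1 \cap [i, i+\abs{w}) = (i-l) + \big(E_1 \cap [l, l+\abs{w})\big)$. Emptiness is translation-invariant, so $E_1 \cap [i, i+\abs{w}) = \emptyset \iff w = s$, which is part~\eqref{LAB:1cutting-E1i-empty}; and when the set is nonempty, minima translate accordingly, so $\min\!\big(E_1 \cap [i, i+\abs{w})\big) = (i-l) + (l + \abs{s}) = i + \abs{s}$. Since the $1$-cutting at the index $i$ may be taken to be the \emph{natural} one, obtained by cutting $w$ along the block decomposition $x = \zeta(x_0)\zeta(x_1)\cdots$ at position $i$ --- in which case $l = i$ --- this equals $\abs{\zeta(\word{x}{0}{j})}$, completing part~\eqref{LAB:1cutting-E1i-nonempty}.

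The one genuinely delicate point is the second step: isolating $\min\!\big(E_1 \cap [l, l+\abs{w})\big)$ as \emph{exactly} $\abs{\zeta(\word{x}{0}{j})}$, with no cutting bar strictly between $l$ and it, which is where the proper-suffix property of $s$ (through Lemma~\ref{L:1cutting-props}\eqref{LAB:1cutting-props-zetajx1}) and strict monotonicity of $E_1$ are both needed; one must also keep an eye on the degenerate cases ($j = 0$ with $s = \emptyword$, and $k = 0$ with $t = \emptyword$ when $w = s$). Everything else is routine translation bookkeeping.
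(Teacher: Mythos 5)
Your argument follows the paper's proof almost verbatim in its main body: you pin down $E_1\cap[l,l+\abs{w})$ at the block-aligned occurrence $l$ using Lemma~\ref{L:1cutting-props}\eqref{LAB:1cutting-props-zetajx1}--\eqref{LAB:1cutting-props-zetaj} together with the monotonicity \eqref{E1-increasing} (so this set is empty iff $w=s$, which forces $k=0$ and $t=\emptyword$, and otherwise has minimum $\abs{\zeta(\word{x}{0}{j})}=l+\abs{s}$), and then you transport the conclusion to the occurrence at $i$ via clause~\eqref{LAB-1cutting-E} of Definition~\ref{DEF:1-cutting}. Up to and including the translation of minima, $\min\big(E_1\cap[i,i+\abs{w})\big)=(i-l)+(l+\abs{s})=i+\abs{s}$, everything is correct and is exactly what the paper does.

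The closing sentence of your part~\eqref{LAB:1cutting-E1i-nonempty} argument, however, is not a valid step. The lemma is a statement about the \emph{given} tuple $\CCc$ together with the \emph{given} witness $j$, and its conclusion explicitly mentions that $j$; you are not free to ``take the $1$-cutting to be the natural one'' with $l=i$, because replacing the cutting (or its witness) changes the quantity $\abs{\zeta(\word{x}{0}{j})}$ you are asked to compute. What clause~\eqref{LAB-1cutting-E} actually delivers is $\min\big(E_1\cap[i,i+\abs{w})\big)=i+\abs{s}$; identifying this with $\abs{\zeta(\word{x}{0}{j})}=l+\abs{s}$ amounts to the claim $i=l$, and nothing in Definition~\ref{DEF:1-cutting} forces a witness to satisfy this --- the same tuple can be a $1$-cutting at the index $i$ with several distinct witnesses $j$, yielding distinct values of $\abs{\zeta(\word{x}{0}{j})}$ (for Thue--Morse, $[\emptyword,\zeta(0),\zeta(1),\emptyword]$ is a $1$-cutting of $w=0110$ at the index $i=12$ both with $j=6$, where $l=12$, and with $j=0$, where $l=0$). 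To be fair, the paper's own proof is equally laconic here: it ends with ``it suffices to use \eqref{LAB-1cutting-E}'' and leaves the identification $i+\abs{s}=\abs{\zeta(\word{x}{0}{j})}$ implicit, so you have correctly isolated the delicate spot; but closing it by a change of the object the lemma quantifies over is not a proof. A correct closure must argue that the witness can be (or, under the intended reading, must be) taken so that $l=i$, i.e.\ that the element $i+\abs{s}$ of $E_1$ is precisely the cutting bar $\abs{\zeta(\word{x}{0}{j})}$.
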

\begin{proof}
\eqref{LAB:1cutting-E1i-empty}
	If $w\ne s$ then $l-\abs{s}=|\zeta(x_{[0,j)})|\in E_1\cap [l,l+\abs{s})$ 
	by Lemma~\ref{L:1cutting-props}\eqref{LAB:1cutting-props-zetaj}, hence $E_1\cap [i,i+\abs{w}) \ne \emptyset$
	by \eqref{LAB-1cutting-E} from Definition~\ref{DEF:1-cutting}.
	On the other hand, if $w=s$ then $E_1\cap [l,l+\abs{w}) = \emptyset$ by \eqref{E1-increasing} and Lemma~\ref{L:1cutting-props}\eqref{LAB:1cutting-props-zetajx1}--\eqref{LAB:1cutting-props-zetaj},
	hence $E_1\cap [i,i+\abs{w}) = \emptyset$.
	
\eqref{LAB:1cutting-E1i-nonempty}
	Assume that $E_1\cap [i,i+\abs{w}) \ne \emptyset$.
	Then $w\ne s$ by \eqref{LAB:1cutting-E1i-empty}.
	So, by Lemma~\ref{L:1cutting-props}\eqref{LAB:1cutting-props-zetajx1} and \eqref{LAB:1cutting-props-zetaj},
	$\min \big(E_1\cap [l,l+\abs{w})\big) = \abs{\zeta(x_{[0,j)})} = l+\abs{s}$. Now it suffices to use 
	\eqref{LAB-1cutting-E} from Definition~\ref{DEF:1-cutting}.	
\end{proof}

\begin{lemma}\label{L:1cutting-existence-nonempty}
	Let $w\in\lang_\zeta$ be nonempty and $i\in N_w$. 
	If $E_1\cap [i,i+\abs{w}) \ne \emptyset$ then there exists a $1$-cutting at the index $i$ of $w$.
\end{lemma}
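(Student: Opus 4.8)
The plan is to exhibit the \emph{canonical} $1$-cutting of $w$ at $i$, namely the one read directly off the block decomposition $x=\zeta(x)=\zeta(x_0)\zeta(x_1)\zeta(x_2)\cdots$. Write $e_n=\abs{\zeta(\word{x}{0}{n})}$, so that by \eqref{E1-increasing} we have $E_1=\{e_0<e_1<e_2<\cdots\}$, the half-open intervals $[e_n,e_{n+1})$ partition $\NNN_0$, and $x$ restricted to $[e_n,e_{n+1})$ is exactly $\zeta(x_n)$. The idea is simply to split $w=\word{x}{i}{i+\abs{w}}$ at those cutting bars of $E_1$ that it meets, and to check that the hypothesis $E_1\cap[i,i+\abs{w})\ne\emptyset$ guarantees this splitting has the required shape.

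First I would locate $i$ inside this partition, in two cases. If $i\in E_1$, say $i=e_j$, I put $s=\emptyword$, which is a proper suffix of $\zeta(x_{j-1})$ when $j\ge 1$ and is permitted by Definition~\ref{DEF:1-cutting} when $j=0$. Otherwise $e_{j-1}<i<e_j$ for a unique $j\ge 1$, and I put $s=\word{x}{i}{e_j}$, the suffix of $\zeta(x_{j-1})$ of length $e_j-i$, which is proper since $e_j-i<e_j-e_{j-1}=\abs{\zeta(x_{j-1})}$. In either case $l:=e_j-\abs{s}=i$, so that condition~\eqref{LAB-1cutting-E} of Definition~\ref{DEF:1-cutting} will be automatic.

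The step that genuinely uses the hypothesis is the inequality $\abs{s}<\abs{w}$. Choosing any $e_m\in E_1\cap[i,i+\abs{w})$, the choice of $j$ makes $e_j$ the least element of $E_1$ that is $\ge i$, so $i\le e_j\le e_m<i+\abs{w}$ and hence $\abs{s}=e_j-i<\abs{w}$. Therefore $u:=\word{x}{e_j}{i+\abs{w}}$ is a nonempty word, and since $\word{x}{e_j}{\infty}=\zeta(x_j)\zeta(x_{j+1})\cdots$ it is a prefix of this infinite concatenation. I then take $k\ge 0$ maximal with $e_{j+k}-e_j\le\abs{u}$ (such $k$ exists, as $k=0$ works and $e_{j+k}\to\infty$), and set $v_h=\zeta(x_{j+h})$ for $h\in[0,k)$ and $t=\word{x}{e_{j+k}}{i+\abs{w}}$; maximality of $k$ forces $\abs{t}<\abs{\zeta(x_{j+k})}$, so $t$ is a proper prefix of $\zeta(x_{j+k})$, and by construction $w=sv_0\cdots v_{k-1}t$. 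Verifying \eqref{LAB-1cutting-E} is immediate because $l=i$ gives $(i-l)+E_1\cap[l,l+\abs{w})=E_1\cap[i,i+\abs{w})$. Hence $[s,v_0,\dots,v_{k-1},t]$ is a $1$-cutting at the index $i$ of $w$ (with $j$).

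The proof is thus largely bookkeeping; the only delicate point is confirming that $s$ is a \emph{proper} suffix and $t$ a \emph{proper} prefix, which is precisely where the assumption $E_1\cap[i,i+\abs{w})\ne\emptyset$ is indispensable — indeed, by Lemma~\ref{L:1cutting-E1i}\eqref{LAB:1cutting-E1i-empty} it cannot be dropped, since a $1$-cutting with empty intersection forces $w=s$. I would therefore organize the write-up as the two cases $i\in E_1$ and $i\notin E_1$: in each, define $j$ and $s$, deduce $\abs{s}<\abs{w}$ from the hypothesis, and then produce $k$ and $v_0,\dots,v_{k-1},t$ exactly as above.
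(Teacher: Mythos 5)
Your proposal is correct and follows essentially the same route as the paper's proof: both construct the canonical $1$-cutting by taking $j$ so that $\abs{\zeta(x_{[0,j)})}$ is the least cutting bar in $[i,i+\abs{w})$, letting $s$ be the initial segment of $w$ up to that bar, taking $k$ maximal with $\abs{\zeta(x_{[0,j+k)})}\le i+\abs{w}$, and observing that $l=i$ makes condition~\eqref{LAB-1cutting-E} automatic. Your write-up is only slightly more explicit (the case split on $i\in E_1$ and the verification that $\abs{s}<\abs{w}$), but these are implicit in the paper's choice of $j$.
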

\begin{proof}
	Let $j\ge 0$ be the unique integer such that 
	$\abs{\zeta(x_{[0,j)})} = \min\big(E_1\cap[i,i+\abs{w}) \big)$
	and $k\ge 0$ be the maximal integer such that $\abs{\zeta(x_{[0,j+k)})}\le i+\abs{w}$.
	Put $g=\abs{\zeta(x_{[0,j)})}-i\ge 0$ and $f=i+\abs{w}-\abs{\zeta(x_{[0,j+k)})}\ge 0$. Then
	$s=x_{[i,i+g)}$ is a proper suffix of $\zeta(x_{j-1})$ 
	and $t=\zeta(x_{j+k})_{[0,f)}$ is a proper prefix of $\zeta(x_{j+k})$. 
	Clearly, the $(k+2)$-tuple $[s,v_0,\dots,v_{k-1},t]$, where $v_h=\zeta(x_{j+h})$ ($h\in[0,k)$),
	satisfies \eqref{LAB-1cutting-w}--\eqref{LAB-1cutting-st} from Definition~\ref{DEF:1-cutting}.
	The final property  \eqref{LAB-1cutting-E} follows trivially since, by the choice of $j$, 
	$l=\abs{\zeta(x_{[0,j)})} - g = i$.
\end{proof}

Recall that a letter $a\in A$ is allowed if $N_a \neq \emptyset$, that is, $a=x_j$ for some $j\in\NNN_0$.

\begin{lemma}\label{L:1cutting-existence-empty}
	Let $w\in\lang_\zeta$ be nonempty and $i\in N_w$. 
	Assume that $E_1\cap [i,i+\abs{w}) = \emptyset$. Then there exists a $1$-cutting at the index $i$ of $w$
	if and only if $w$ is a proper suffix of $\zeta(a)$ for some allowed letter $a\in A$.
\end{lemma}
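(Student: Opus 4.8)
The plan is to treat the two implications separately, and in both to exploit Lemma~\ref{L:1cutting-E1i}\eqref{LAB:1cutting-E1i-empty}: under the standing hypothesis $E_1\cap[i,i+\abs{w})=\emptyset$, any $1$-cutting at the index $i$ of $w$ must be the degenerate $(0+2)$-tuple $[w,\emptyword]$ (that is, $k=0$, $s=w$, $t=\emptyword$). So the whole question reduces to deciding when such a degenerate cutting is legitimate.

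For the direction ``$\Rightarrow$'', I would start from a $1$-cutting $[s,v_0,\dots,v_{k-1},t]$ at the index $i$ of $w$ with witness $j$, apply Lemma~\ref{L:1cutting-E1i}\eqref{LAB:1cutting-E1i-empty} to conclude $k=0$, $s=w$, $t=\emptyword$, and then read off from part~\eqref{LAB-1cutting-st} of Definition~\ref{DEF:1-cutting} that $w=s$ is a proper suffix of $\zeta(x_{j-1})$. Since $w$ is nonempty this excludes $j=0$, so $j\ge1$ and $x_{j-1}$ is an allowed letter (it occurs in $x$ at index $j-1$, i.e.\ $j-1\in N_{x_{j-1}}$). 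Taking $a=x_{j-1}$ finishes this direction.

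For ``$\Leftarrow$'', assuming $w$ is a proper suffix of $\zeta(a)$ for some allowed letter $a$, I would pick $m\in\NNN_0$ with $x_m=a$, set $j=m+1$, and verify directly that $\CCc=[w,\emptyword]$ is a $1$-cutting at the index $i$ of $w$ with this $j$. Conditions \eqref{LAB-1cutting-w} and \eqref{LAB-1cutting-v} of Definition~\ref{DEF:1-cutting} are immediate, and \eqref{LAB-1cutting-st} holds because $s=w$ is a proper suffix of $\zeta(x_{j-1})=\zeta(a)$ while $t=\emptyword$ is a proper prefix of the nonempty word $\zeta(x_j)$. The only condition needing thought is the compatibility relation \eqref{LAB-1cutting-E}, where $l=\abs{\zeta(\word{x}{0}{j})}-\abs{w}$. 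Because $w$ is a \emph{proper} suffix one has $\abs{w}<\abs{\zeta(x_{j-1})}=\abs{\zeta(\word{x}{0}{j})}-\abs{\zeta(\word{x}{0}{j-1})}$, hence $\abs{\zeta(\word{x}{0}{j-1})}<l<\abs{\zeta(\word{x}{0}{j})}$; since by \eqref{E1-increasing} no element of $E_1$ lies strictly between the consecutive values $\abs{\zeta(\word{x}{0}{j-1})}$ and $\abs{\zeta(\word{x}{0}{j})}$, we get $E_1\cap[l,l+\abs{w})=\emptyset$. Thus both sides of \eqref{LAB-1cutting-E} are empty and the relation $\emptyset=(i-l)+\emptyset$ holds trivially, so $\CCc$ is a $1$-cutting.

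The point where I would be most careful --- and the only real obstacle --- is this last verification of \eqref{LAB-1cutting-E}: one might worry that it forces a genuine arithmetic relation between the given index $i$ and the ``canonical'' index $l=\abs{\zeta(\word{x}{0}{j})}-\abs{w}$ of the chosen occurrence of $w$ inside $\zeta(a)$. The observation that rescues the argument is that ``proper suffix'' is exactly what pins the half-open interval $[l,l+\abs{w})$ strictly inside the single block $[\abs{\zeta(\word{x}{0}{j-1})},\abs{\zeta(\word{x}{0}{j})})$, so that it meets $E_1$ in the empty set and \eqref{LAB-1cutting-E} degenerates to $\emptyset=\emptyset$ regardless of $i$.
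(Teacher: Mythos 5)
Your proposal is correct and takes essentially the same route as the paper: the forward direction via Lemma~\ref{L:1cutting-E1i}\eqref{LAB:1cutting-E1i-empty}, and the converse by exhibiting $\CCc=[s=w,t=\emptyword]$ with $j\ge 1$ chosen so that $x_{j-1}=a$. Your explicit verification of condition~\eqref{LAB-1cutting-E} — that ``proper suffix'' forces $[l,l+\abs{w})$ strictly inside one block of $E_1$, so both sides degenerate to $\emptyset$ — is exactly the detail the paper leaves implicit.
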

\begin{proof}
	The implication from left to right follows from Lemma~\ref{L:1cutting-E1i}. To prove the converse implication,
	take $j\ge 1$ such that $a=x_{j-1}$. Then
	$\CCc=[s=w,t=\emptyword]$ is a $1$-cutting at the index $i$ of $w$ (with $j$).
\end{proof}

\begin{proposition}\label{P:1cutting-existence-and-uniqueness}
	Let $w\in\lang_\zeta$ be nonempty and $i\in N_w$. 
	Then there exists a $1$-cutting at the index $i$ of $w$ if and only if one of the following conditions is true:
	\begin{enumerate}
	\item $E_1\cap [i,i+\abs{w}) \ne \emptyset$;
	\item $E_1\cap [i,i+\abs{w}) = \emptyset$ and $w$ is a proper suffix of $\zeta(a)$ for some allowed letter $a\in A$.
	\end{enumerate}	
	If this is the case then a $1$-cutting at the index $i$ is unique.
\end{proposition}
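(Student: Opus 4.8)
The plan is to derive the statement entirely from the lemmas already established in this appendix, splitting the argument according to whether $E_1\cap[i,i+\abs{w})$ is empty or not.

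First I would settle the existence equivalence. If condition~(1) holds, Lemma~\ref{L:1cutting-existence-nonempty} immediately produces a $1$-cutting at the index $i$ of $w$; if condition~(2) holds, Lemma~\ref{L:1cutting-existence-empty} does the same. For the converse, suppose a $1$-cutting at the index $i$ of $w$ exists. Either $E_1\cap[i,i+\abs{w})\ne\emptyset$, which is exactly case~(1), or this intersection is empty, and then the ``only if'' direction of Lemma~\ref{L:1cutting-existence-empty} forces $w$ to be a proper suffix of $\zeta(a)$ for some allowed letter $a$, which is case~(2). Since (1) and (2) are mutually exclusive, the equivalence follows.

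Next I would prove uniqueness. Let $\CCc=[s,v_0,\dots,v_{k-1},t]$ and $\CCc'=[s',v_0',\dots,v_{k'-1}',t']$ be $1$-cuttings at the index $i$ of $w$, with witnesses $j$ and $j'$ respectively. If $E_1\cap[i,i+\abs{w})=\emptyset$, then Lemma~\ref{L:1cutting-E1i}\eqref{LAB:1cutting-E1i-empty} applied to each of them forces $k=k'=0$, $s=s'=w$ and $t=t'=\emptyword$, so $\CCc=\CCc'=[w,\emptyword]$, independently of which witness is chosen. If instead $E_1\cap[i,i+\abs{w})\ne\emptyset$, Lemma~\ref{L:1cutting-E1i}\eqref{LAB:1cutting-E1i-nonempty} gives
$$
\abs{\zeta(\word{x}{0}{j})}=\min\big(E_1\cap[i,i+\abs{w})\big)=\abs{\zeta(\word{x}{0}{j'})},
$$
and the strict monotonicity of the cutting bars recorded in \eqref{E1-increasing} turns this into $j=j'$; the same displayed equality yields $i+\abs{s}=i+\abs{s'}$, hence $\abs{s}=\abs{s'}$, and Lemma~\ref{L:1cutting-props}\eqref{LAB:1cutting-props-cutting} then gives $\CCc=\CCc'$.

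The main obstacle I anticipate is precisely this last step: the witness $j$ is not part of the tuple, so uniqueness of the whole cutting rests on being able to recover $j$ from the numerical value $\min(E_1\cap[i,i+\abs{w}))=\abs{\zeta(\word{x}{0}{j})}$, which is possible only because $k\mapsto\abs{\zeta(\word{x}{0}{k})}$ is strictly increasing. Everything else is bookkeeping already encapsulated in the preceding lemmas.
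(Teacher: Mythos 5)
Your proof is correct and follows essentially the same route as the paper's: existence via Lemmas~\ref{L:1cutting-existence-nonempty} and \ref{L:1cutting-existence-empty}, uniqueness in the empty case via Lemma~\ref{L:1cutting-E1i}\eqref{LAB:1cutting-E1i-empty}, and in the nonempty case by recovering $j=j'$ and $\abs{s}=\abs{s'}$ from Lemma~\ref{L:1cutting-E1i}\eqref{LAB:1cutting-E1i-nonempty} and concluding with Lemma~\ref{L:1cutting-props}\eqref{LAB:1cutting-props-cutting}. Your explicit appeal to the strict monotonicity in \eqref{E1-increasing} to deduce $j=j'$ is a welcome clarification of a step the paper leaves implicit.
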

\begin{proof}
	The equivalence follows from Lemmas~\ref{L:1cutting-existence-nonempty} and \ref{L:1cutting-existence-empty}.
	The uniqueness in the case when $E_1\cap [i,i+\abs{w}) = \emptyset$
	follows from Lemma~\ref{L:1cutting-E1i}\eqref{LAB:1cutting-E1i-empty};
	thus it suffices to show the uniqueness in the case when $E_1\cap [i,i+\abs{w}) \ne \emptyset$.
	To this end, assume that $E_1\cap [i,i+\abs{w}) \ne \emptyset$ and take any $1$-cuttings $\CCc=[s,v_0,\dots,v_{k-1},t]$
	and $\CCc'=[s',v_0',\dots,v_{k'-1}',t']$ at the index $i$ of $w$ (with $j$ and $j'$, respectively).
	By Lemma~\ref{L:1cutting-E1i}\eqref{LAB:1cutting-E1i-empty}, $s\ne w\ne s'$; thus, 
	by Lemma~\ref{L:1cutting-E1i}\eqref{LAB:1cutting-E1i-nonempty}, $\abs{\zeta(x_{[0,j)})}= \min\big( E_1\cap [i,i+\abs{w}) \big)  =\abs{\zeta(x_{[0,j')})}$.
	So $j=j'$. Further, $\abs{s}=\abs{s'}$ by Lemma~\ref{L:1cutting-E1i}\eqref{LAB:1cutting-E1i-nonempty}.
	Hence $\CCc=\CCc'$ by Lemma~\ref{L:1cutting-props}\eqref{LAB:1cutting-props-cutting}.
\end{proof}

As a corollary we obtain that every sufficiently long word $w$
has a unique $1$-cutting at the index $i$ for every $i\in N_w$.

\begin{corollary}\label{C:1cutting-existence-and-uniqueness}
	Let $w\in\lang_\zeta$ satisfies $\abs{w}\ge\max_a \abs{\zeta(a)}$. 
	Then, for every $i\in N_w$, there is a  unique $1$-cutting at the index $i$ of $w$.
\end{corollary}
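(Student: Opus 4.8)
The plan is to reduce the claim to Proposition~\ref{P:1cutting-existence-and-uniqueness}, which already packages both the existence dichotomy and the uniqueness of a $1$-cutting. So, fixing $w\in\lang_\zeta$ with $\abs{w}\ge\max_a\abs{\zeta(a)}$ and $i\in N_w$, it suffices to verify that condition (1) of that proposition holds, namely $E_1\cap[i,i+\abs{w})\ne\emptyset$; uniqueness then comes for free. (Incidentally, condition (2) there is vacuous in our range, since a proper suffix of $\zeta(a)$ has length strictly less than $\abs{\zeta(a)}\le\max_a\abs{\zeta(a)}\le\abs{w}$, so $w$ cannot be such a suffix — but we will not need this.)

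The key observation is that consecutive cutting bars are close together. By \eqref{E1-increasing}, $E_1=\{\,\abs{\zeta(x_{[0,k)})}\colon k\in\NNN_0\,\}$ is a strictly increasing sequence, and two consecutive members differ by exactly $\abs{\zeta(x_{[0,k+1)})}-\abs{\zeta(x_{[0,k)})}=\abs{\zeta(x_k)}\le\max_a\abs{\zeta(a)}\le\abs{w}$. Hence every half-open interval of length $\abs{w}$ in $\NNN_0$ meets $E_1$: let $e=\abs{\zeta(x_{[0,j)})}$ be the largest element of $E_1$ with $e\le i$ (it exists since $0\in E_1$). If $e=i$, then $e\in E_1\cap[i,i+\abs{w})$ because $\abs{w}\ge 1$. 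If $e<i$, then the next bar $e'=\abs{\zeta(x_{[0,j+1)})}=e+\abs{\zeta(x_j)}$ satisfies $e'>i$ by maximality of $e$, while $e'\le(i-1)+\abs{w}<i+\abs{w}$; so again $E_1\cap[i,i+\abs{w})\ne\emptyset$. Applying Proposition~\ref{P:1cutting-existence-and-uniqueness} now finishes the proof.

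I expect no genuine obstacle here; the only thing needing a moment's care is the boundary case $\abs{w}=\max_a\abs{\zeta(a)}$ together with $e=i$, where the "next bar" argument would land exactly on $i+\abs{w}$ and thus fail — but in that case $i$ itself is already a cutting bar inside $[i,i+\abs{w})$, which is why splitting into $e=i$ and $e<i$ is the right bookkeeping.
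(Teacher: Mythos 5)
Your proof is correct and follows exactly the route the paper intends: the corollary is stated as an immediate consequence of Proposition~\ref{P:1cutting-existence-and-uniqueness}, and the only content to supply is that consecutive cutting bars in $E_1$ differ by $\abs{\zeta(x_k)}\le\max_a\abs{\zeta(a)}\le\abs{w}$, so $E_1\cap[i,i+\abs{w})\ne\emptyset$ and condition (1) of the proposition applies. Your handling of the boundary case $e=i$ versus $e<i$ is the right bookkeeping and the argument is complete.
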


\subsection{Uniform substitutions}

For uniform substitutions, the connection between recognizability of a word and uniqueness of a $1$-cutting
is described in Propositions~\ref{P:1cutting-uniform-necessary} and \ref{P:1cutting-uniform-sufficient}.
To abbreviate their formulations, we say that a nonempty word $w\in\lang_\zeta$:
\begin{itemize}
\item \emph{has no $1$-cutting} if there is no $1$-cutting at the index $i$ of $w$ for every $i\in N_w$;
\item \emph{has a weakly unique $1$-cutting} if there are $k\ge 0$ and a $(k+2)$-tuple $\CCc$ of words over $A$ such that
	\begin{itemize}
	\item $w$ has no $1$-cutting at the index $i_0$ of $w$ for some $i_0\in N_w$,
	\item $\CCc$ is a $1$-cutting at the index $i_1$ of $w$ for some $i_1\in N_w$, and
	\item for every $i\in N_w$, either $\CCc$ is a $1$-cutting or there is no $1$-cutting at the index $i$ of $w$;
	\end{itemize} 
\item \emph{has a strongly unique $1$-cutting} if there are $k\ge 0$ and a $(k+2)$-tuple $\CCc$ of words over $A$ such that $\CCc$ is a (unique) $1$-cutting at the index $i$ of $w$ for every $i\in N_w$;
\item \emph{has (at least) two $1$-cuttings} if there are $i,i'\in N_w$ and $1$-cuttings  $\CCc\ne \CCc'$ at the indexes $i,i'$ (respectively) of $w$.
\end{itemize}
By Proposition~\ref{P:1cutting-existence-and-uniqueness}, 
every nonempty $w\in \lang_\zeta$ satisfies exactly one of the four conditions. For nonempty $w\in\lang_\zeta$
put
\begin{equation*}
	P_w = \{p_i\colon i\in N_w\},
\end{equation*}
where $p_i$ is the unique integer from $[0,q)$ such that $i\equiv p_i \pmod q$.

We start with two lemmas.

\begin{lemma}\label{L:1cutting-uniform-E1i}
	Let $q\ge 2$ and $\zeta$ be a $q$-uniform substitution satisfying \eqref{assumpt-start-with-0} 
	and \eqref{assumpt-one-to-one}.
	Let $w\in\lang_\zeta$ be nonempty and $i\in N_w$. 
	If $p_i+\abs{w}\ge q$ then there is a $1$-cutting $\CCc_i$ at the index $i$ of $w$. 
	
	Moreover, for every $i,i'\in N_w$ with $p_i+\abs{w}\ge q$ and $p_{i'}+\abs{w}\ge q$,
	$\CCc_i=\CCc_{i'}$ if and only if $p_i=p_{i'}$.
\end{lemma}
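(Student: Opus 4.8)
The plan is to establish the two assertions in turn. First I would show that a $1$-cutting at the index $i$ of $w$ exists (and is then unique by Proposition~\ref{P:1cutting-existence-and-uniqueness}) whenever $p_i+\abs{w}\ge q$, and afterwards that $\CCc_i=\CCc_{i'}$ holds exactly when $p_i=p_{i'}$. Throughout I will use the two features specific to the $q$-uniform case: $\abs{\zeta(\word{x}{0}{k})}=kq$ for every $k$, so $E_1=q\NNN_0$; and $p_i\in[0,q)$ is just the residue of $i$ modulo $q$.

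For existence I would split according to the two cases of Proposition~\ref{P:1cutting-existence-and-uniqueness}. If $E_1\cap[i,i+\abs{w})\ne\emptyset$, Lemma~\ref{L:1cutting-existence-nonempty} already supplies a $1$-cutting. Otherwise, since $\abs{w}\ge 1$ we have $i\in[i,i+\abs{w})$, so $i\notin E_1=q\NNN_0$ and thus $p_i\ge 1$; the smallest element of $E_1$ that is $\ge i$ is then $i+(q-p_i)$, and emptiness of $E_1\cap[i,i+\abs{w})$ forces $\abs{w}\le q-p_i$, which combined with the hypothesis $p_i+\abs{w}\ge q$ pins down $\abs{w}=q-p_i<q$. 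Writing $i-p_i=qm$, the block $\word{x}{qm}{q(m+1)}$ equals $\zeta(x_m)$ (recall $x=\zeta(x)$ and $\zeta$ is $q$-uniform), and $w=\word{x}{i}{i+\abs{w}}$ is precisely its suffix of length $q-p_i$, hence a proper suffix of $\zeta(x_m)$ with $x_m$ allowed; so Lemma~\ref{L:1cutting-existence-empty} applies. In either case uniqueness is the last assertion of Proposition~\ref{P:1cutting-existence-and-uniqueness}, and I denote the unique cutting $\CCc_i=[s_i,v^i_0,\dots,v^i_{k_i-1},t_i]$.

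The key to the equivalence is the observation that $\abs{s_i}\equiv -p_i\pmod q$, together with $0\le\abs{s_i}<q$ (by Definition~\ref{DEF:1-cutting}, the first entry of a $1$-cutting is empty or a proper suffix of some $\zeta(a)$, which has length $q$). For the congruence I would again distinguish the two cases: if $E_1\cap[i,i+\abs{w})\ne\emptyset$, Lemma~\ref{L:1cutting-E1i} gives $\min\big(E_1\cap[i,i+\abs{w})\big)=i+\abs{s_i}$, which lies in $q\NNN_0$, whence $i+\abs{s_i}\equiv 0\pmod q$; if $E_1\cap[i,i+\abs{w})=\emptyset$, the same lemma forces $s_i=w$, and the previous paragraph gives $\abs{w}=q-p_i$. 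Since both $\abs{s_i}$ and $p_i$ lie in $[0,q)$, the congruence says each determines the other. Next, from $w=s_iv^i_0\cdots v^i_{k_i-1}t_i$ with $\abs{v^i_h}=q$ and $\abs{t_i}<q$, one reads off $k_i=\lfloor(\abs{w}-\abs{s_i})/q\rfloor$ and
\[
  s_i=\word{w}{0}{\abs{s_i}},\qquad
  v^i_h=\word{w}{\abs{s_i}+hq}{\abs{s_i}+(h+1)q}\ \ (0\le h<k_i),\qquad
  t_i=\word{w}{\abs{s_i}+k_iq}{\abs{w}};
\]
hence the whole tuple $\CCc_i$ is a function of $w$ and $\abs{s_i}$, and so of $w$ and $p_i$. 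Therefore $p_i=p_{i'}$ gives $\abs{s_i}=\abs{s_{i'}}$ and hence $\CCc_i=\CCc_{i'}$; conversely $\CCc_i=\CCc_{i'}$ gives $s_i=s_{i'}$, hence $\abs{s_i}=\abs{s_{i'}}$, hence $p_i=p_{i'}$.

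The only delicate point is the empty-intersection case: it is precisely the hypothesis $p_i+\abs{w}\ge q$ that upgrades $\abs{w}\le q-p_i$ to the equality $\abs{w}=q-p_i$, which is what makes $w$ a proper suffix of the single block $\zeta(x_m)$ and also what makes $\abs{s_i}=\abs{w}$ carry the correct residue $-p_i$ modulo $q$. Everything else is routine bookkeeping with the block decomposition of $w$.
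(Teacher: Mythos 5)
Your proposal is correct and follows essentially the same route as the paper: existence is obtained from the block structure of $x=\zeta(x)$ (the paper constructs the cutting directly in the cases $p+\abs{w}=q$, $p>0$, $p=0$, while you route it through Lemma~\ref{L:1cutting-existence-nonempty}, Lemma~\ref{L:1cutting-existence-empty} and Proposition~\ref{P:1cutting-existence-and-uniqueness}), and the equivalence rests on the same observation that in the $q$-uniform case the whole tuple is determined by $w$ and $\abs{s_i}$, with $\abs{s_i}$ and $p_i$ determining each other. Your congruence formulation $\abs{s_i}\equiv -p_i\pmod q$ with $\abs{s_i}\in[0,q)$ is in fact slightly cleaner than the paper's literal claim $p_i+\abs{s}=q$, which needs the obvious adjustment when $p_i=0$.
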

\begin{proof}
	Put $\ell =\abs{w}$ and $p=p_i$. 
	If $p+\ell=q$ then clearly $\CCc=[w,\emptyword]$
	is a $1$-cutting at the index $i$. 
	
	If $p+\ell>q$ and $p>0$, fix $j\in\NNN$ such that $i=q(j-1)+p$. 
	Then $\min\big( E_1\cap [i,i+\ell) \big) = qj$. 
	Let $k\in\NNN_0$ be the maximal integer such that $q(j+k)\le i+\ell$.
	Put $s=x_{[i,i+q-p=qj)}$, $v_h=x_{[q(j+h),q(j+h+1))}=\zeta(x_{j+h})$ for $h\in[0,k)$, and $t=x_{[q(j+k),i+\ell)}$.
	Then $\CCc=[s,v_0,\dots,v_{k-1},t]$ is a $1$-cutting at the index $i$.
	
	Finally, assume that $p+\ell>q$ and $p=0$. Fix $j\in\NNN$ such that $i=qj$. 
	As above, let	$k\in\NNN_0$ be the maximal integer such that $q(j+k)\le i+\ell$,
	and put $s=\emptyword$, $v_h=x_{[q(j+h),q(j+h+1))}=\zeta(x_{j+h})$ for $h\in[0,k)$, and $t=x_{[q(j+k),i+\ell)}$.  
	Then $\CCc=[s,v_0,\dots,v_{k-1},t]$ is a $1$-cutting at the index $i$.
	
	\medskip 
	
	It remains to prove the second (``uniqueness'') statement.
	Fix any $i,i'\in N_w$ such that $p_i+\abs{w}\ge q$ and $p_{i'}+\abs{w}\ge q$.
	Let $\CCc_i=[s,v_0,\dots,v_{k-1},t]$ and  $\CCc_{i'}=[s',v_0',\dots,v_{k'-1}',t']$.
	Since $\zeta$ is $q$-uniform, we clearly have that $\CCc_i=\CCc_{i'}$ if and only if $\abs{s}=\abs{s'}$.
	Now it suffices to realize that $p_i+\abs{s}=q=p_{i'}+\abs{s'}$.
\end{proof}

\begin{lemma}\label{L:1cutting-uniform-short}
	Let $q\ge 2$ and $\zeta$ be a $q$-uniform substitution satisfying \eqref{assumpt-start-with-0} 
	and \eqref{assumpt-one-to-one}.
	Let $w\in\lang_\zeta$ be nonempty.
	Then, for every $i\in N_w$, $\CCc=[s=w,t=\emptyword]$ is a $1$-cutting at the index $i$ if and only if
	$\max P_w = q-\abs{w}$.
\end{lemma}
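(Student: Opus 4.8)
The plan is to chain the general–substitution results of this appendix and then specialise to the case $E_1=q\NNN_0$, which holds because $\zeta$ is $q$-uniform.

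First note that $\CCc=[s=w,t=\emptyword]$ is a $(k{+}2)$-tuple with $k=0$, $s=w$ and $t=\emptyword$, so asserting that $\CCc$ is a $1$-cutting at an index $i\in N_w$ of $w$ is exactly asserting that the $1$-cutting at $i$ has $s$-component equal to $w$. By Lemma~\ref{L:1cutting-E1i}\eqref{LAB:1cutting-E1i-empty} this forces $E_1\cap[i,i+\abs{w})=\emptyword$; and conversely, if $E_1\cap[i,i+\abs{w})=\emptyword$, then Lemma~\ref{L:1cutting-existence-empty} says there is a $1$-cutting at $i$ of $w$ precisely when $w$ is a proper suffix of $\zeta(a)$ for some allowed letter $a$, while Lemma~\ref{L:1cutting-E1i}\eqref{LAB:1cutting-E1i-empty} again forces that $1$-cutting to be $\CCc$ itself. (For the forward direction one also reads off $\eqref{LAB-1cutting-st}$ that $s=w$ is a proper suffix of $\zeta(x_{j-1})$ with $x_{j-1}$ allowed.) So I would first establish the clean statement: \emph{$\CCc$ is a $1$-cutting at the index $i$ of $w$ if and only if $E_1\cap[i,i+\abs{w})=\emptyword$ and $w$ is a proper suffix of $\zeta(a)$ for some allowed $a$.}

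The remaining work is bookkeeping modulo $q$. Since $\abs{\zeta(u)}=q\abs{u}$ we have $E_1=q\NNN_0$, and counting multiples of $q$ inside $[i,i+\abs{w})$ gives
$$
E_1\cap[i,i+\abs{w})=\emptyword\iff 1\le p_i\le q-\abs{w}
$$
(so in particular $\abs{w}\le q-1$). For the suffix condition: if $w$ is a proper suffix of $\zeta(a)$ with $a$ allowed, choose $j\ge 1$ with $x_{j-1}=a$; then $w=\word{x}{qj-\abs{w}}{qj}$ is an occurrence of $w$ at an index congruent to $q-\abs{w}$ modulo $q$, so $q-\abs{w}\in P_w$. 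Conversely, if $\abs{w}<q$ and $q-\abs{w}\in P_w$, pick $i_0\in N_w$ with $p_{i_0}=q-\abs{w}$ and write $i_0=q(j_0-1)+(q-\abs{w})$; then $w$ is the last $\abs{w}$ letters of $\word{x}{q(j_0-1)}{qj_0}=\zeta(x_{j_0-1})$, a proper suffix of $\zeta(x_{j_0-1})$ with $x_{j_0-1}$ allowed. Hence $w$ is a proper suffix of some allowed $\zeta(a)$ iff $\abs{w}<q$ and $q-\abs{w}\in P_w$. Putting everything together, $\CCc$ is a $1$-cutting at the index $i$ of $w$ iff $\abs{w}<q$, $q-\abs{w}\in P_w$ and $p_i\le q-\abs{w}$; ranging over $i\in N_w$, the clause $q-\abs{w}\in P_w$ reads $\max P_w\ge q-\abs{w}$ and the clause $p_i\le q-\abs{w}$ (together with $p_i\ge 1$) reads $\max P_w\le q-\abs{w}$, so the two together are equivalent to $\max P_w=q-\abs{w}$, which is the claim, and the reverse implication merely unwinds these equivalences.

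I expect the first step to be the delicate one: extracting the ``empty intersection plus proper suffix'' characterisation out of Definition~\ref{DEF:1-cutting}, in particular checking that the alignment condition~\eqref{LAB-1cutting-E} is vacuously satisfied once both windows $[i,i+\abs{w})$ and $[l,l+\abs{w})$ (with $l=qj-\abs{w}$) are free of cutting bars. After that, the only care needed is in the final step, where one must relate the purely local conditions at a single index $i$ (especially the behaviour at indices $i\in q\NNN_0$, where $\CCc$ is never a $1$-cutting) to the global quantity $\max P_w$; the residue arithmetic itself is routine.
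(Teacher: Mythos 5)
Your argument follows essentially the same route as the paper's proof: both reduce the question to the pair of conditions ``$E_1\cap[i,i+\abs{w})=\emptyset$'' and ``$w$ is a proper suffix of $\zeta(a)$ for some allowed letter $a$'' (via Lemma~\ref{L:1cutting-E1i}\eqref{LAB:1cutting-E1i-empty} and Lemma~\ref{L:1cutting-existence-empty}), translate each into residue arithmetic modulo $q$, and then quantify over $i\in N_w$. Your organization --- a clean per-index equivalence followed by the quantification --- is tidier than the paper's three-way case split on $\max P_w$ versus $q-\abs{w}$, but the content is the same, and your steps establishing the per-index characterization, the equivalence $E_1\cap[i,i+\abs{w})=\emptyset \iff 1\le p_i\le q-\abs{w}$, and the equivalence of the suffix condition with ``$\abs{w}<q$ and $q-\abs{w}\in P_w$'' are all correct.

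The final assembly, however, does not close, and your own bookkeeping shows why. Quantifying ``$1\le p_i\le q-\abs{w}$'' over all $i\in N_w$ yields ``$\max P_w\le q-\abs{w}$ \emph{and} $0\notin P_w$'', not merely ``$\max P_w\le q-\abs{w}$'': you write down the clause $p_i\ge 1$ and then drop it in the last sentence. The missing clause is not implied by $\max P_w=q-\abs{w}$. For the period-doubling substitution ($\zeta(0)=01$, $\zeta(1)=00$, $q=2$) and $w=0$ one has $P_w=\{0,1\}$, hence $\max P_w=1=q-\abs{w}$, yet at any occurrence with $i\equiv 0\pmod q$ the window $[i,i+1)$ contains the cutting bar $i$, so condition~\eqref{LAB-1cutting-E} of Definition~\ref{DEF:1-cutting} fails for $\CCc=[w,\emptyword]$ and the ``if'' direction breaks down at that index. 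To be fair, the paper's own proof has exactly the same gap (it asserts $E_1\cap[i,i+\ell)=\emptyset$ for every $i\in N_w$ directly from the assumption $\max P_w=q-\ell$), so you have reproduced the published argument faithfully, and your formulation actually makes the missing hypothesis easier to spot; but a complete proof of the stated equivalence would need either to add the assumption $0\notin P_w$ or to explain why occurrences at indices divisible by $q$ cannot occur in the situation at hand.
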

\begin{proof}
	Put $\ell=\abs{w}$.
	Assume first that $\max P_w < q-\ell$ and suppose that there is 
	$i\in N_w$ such that there is a $1$-cutting at the index $i$ of $w$.
	Clearly, $E_1\cap [i,i+\ell)=\emptyset$ and so, 
	by Lemma~\ref{L:1cutting-existence-empty}, $w$ is a proper suffix of $\zeta(a)$
	for some allowed letter $a$. But then $jq-\ell\in N_w$ for every $j\ge 1$ with $x_{j-1}=a$, thus $\max P_w = q-\ell$.
	This contradicts the assumption and shows that $w$ has no $1$-cutting at every index $i\in N_w$.		
	
	If there is $i\in N_w$ with $p_i>q-\ell$ then, by Lemma~\ref{L:1cutting-uniform-E1i}, there is
	a $1$-cutting $\CCc_i=[s,v_0,\dots,v_{k-1},t]$ at the index $i$. Clearly, $p_i+\abs{s}=q$, hence
	$\abs{s}<\ell$ and so $\CCc_i\ne \CCc$.
	
	Finally assume that $\max P_w = q-\abs{w}$. Fix $i_0\in N_w$ such that $p_{i_0}=q-\abs{w}$
	and let $j_0\ge 1$ be such that either $i_0=qj_0$ (if $p_{i_0}=0$) or $i_0=q(j_0-1)+p_{i_0}$ (if $p_{i_0}>0$).
	Clearly, $\CCc=[w,\emptyword]$ is a $1$-cutting at the index $i_0$ of $w$ (with $j_0$).
	Further, by the assumption, for any $i\in N_w$ we have that $E_1\cap [i,i+\ell)=\emptyset$.
	Since also $E_1\cap [qj_0-\ell,qj_0)=\emptyset$,
	$\CCc$ is a $1$-cutting at the index $i$ of $w$ (with $j_0$).
\end{proof}

\begin{proposition}\label{P:1cutting-uniform-necessary}
	Let $q\ge 2$ and $\zeta$ be a $q$-uniform substitution satisfying \eqref{assumpt-start-with-0} 
	and \eqref{assumpt-one-to-one}.
	Let $w\in\lang_\zeta$ be a (nonempty) \emph{recognizable} word and $p_w$ be such that $N_w\subseteq q\NNN_0+p_w$.
	\begin{enumerate}
	\item\label{LAB:P:1cutting-uniform-necessary-long}
		If $p_w+\abs{w}\ge q$ then $w$ has a strongly unique $1$-cutting.
	\item\label{LAB:P:1cutting-uniform-necessary-short}
		If $p_w+\abs{w} < q$ then $w$ has no $1$-cutting.
	\end{enumerate} 
\end{proposition}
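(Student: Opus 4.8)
The idea is first to reformulate recognizability of $w$ as the statement that the residue set $P_w=\{p_i\colon i\in N_w\}$ equals the singleton $\{p_w\}$ (equivalently, $p_i=p_w$ for every $i\in N_w$), and then to feed this into the structural results on $1$-cuttings for uniform substitutions, namely Lemma~\ref{L:1cutting-uniform-E1i}, Lemma~\ref{L:1cutting-existence-empty}, and the existence-and-uniqueness dichotomy of Proposition~\ref{P:1cutting-existence-and-uniqueness}; recall also that $E_1=q\NNN_0$ here, see \eqref{E1}.

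For part~\eqref{LAB:P:1cutting-uniform-necessary-long}, assume $p_w+\abs{w}\ge q$. Then every $i\in N_w$ satisfies $p_i+\abs{w}=p_w+\abs{w}\ge q$, so Lemma~\ref{L:1cutting-uniform-E1i} provides a $1$-cutting $\CCc_i$ at the index $i$ of $w$; moreover, by the second (``moreover'') assertion of that lemma together with $p_i=p_{i'}=p_w$, all the tuples $\CCc_i$ coincide, say $\CCc_i=\CCc$ for every $i\in N_w$. Since a $1$-cutting at a fixed index is unique whenever it exists (Proposition~\ref{P:1cutting-existence-and-uniqueness}), $\CCc$ is then the unique $1$-cutting at the index $i$ of $w$ for every $i\in N_w$, which is precisely the meaning of ``$w$ has a strongly unique $1$-cutting''.

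For part~\eqref{LAB:P:1cutting-uniform-necessary-short}, assume $p_w+\abs{w}<q$; then $\abs{w}<q$ and $p_w\ne q-\abs{w}$. Fix $i\in N_w$ and write $i=qm+p_w$ with $m\ge 0$. Since $p_w+\abs{w}<q$, the window $[i,i+\abs{w})$ lies inside the single block $[qm,q(m+1))$ and, when $p_w\ge 1$, avoids its left endpoint $qm$, so $E_1\cap[i,i+\abs{w})=\emptyset$. Suppose, for contradiction, that some index of $N_w$ carries a $1$-cutting. Then, by Lemma~\ref{L:1cutting-existence-empty} (applicable thanks to this emptiness), $w$ is a proper suffix of $\zeta(a)$ for some allowed letter $a$; picking $j\ge1$ with $x_{j-1}=a$ gives $qj-\abs{w}\in N_w$, whose residue modulo $q$ equals $q-\abs{w}$, so $q-\abs{w}\in P_w=\{p_w\}$ and thus $p_w=q-\abs{w}$, a contradiction. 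Hence no index of $N_w$ carries a $1$-cutting, i.e.\ $w$ has no $1$-cutting. As an alternative, one may combine the observation $\max P_w=p_w<q-\abs{w}$ with Lemma~\ref{L:1cutting-uniform-short} to discard the candidate tuple $[s=w,\,t=\emptyword]$, and then eliminate every other candidate tuple by the same argument locating the bars of $E_1$.

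The step I expect to be the crux is showing, in part~\eqref{LAB:P:1cutting-uniform-necessary-short}, that \emph{no} $1$-cutting at all exists: the whole argument hinges on pinning down the positions of the cutting bars of $E_1$ relative to the window $[i,i+\abs{w})$, and the borderline case $p_w=0$ — where $i$ is itself a cutting bar, so the emptiness $E_1\cap[i,i+\abs{w})=\emptyset$ fails — must be handled with extra care using the standing hypotheses on $\zeta$. The remaining steps are routine applications of the quoted lemmas.
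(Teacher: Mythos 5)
Part~\eqref{LAB:P:1cutting-uniform-necessary-long} of your argument is correct and coincides with the paper's (which simply invokes Lemma~\ref{L:1cutting-uniform-E1i} together with the uniqueness statement of Proposition~\ref{P:1cutting-existence-and-uniqueness}); likewise, your treatment of part~\eqref{LAB:P:1cutting-uniform-necessary-short} for $p_w\ge 1$ is sound and is essentially the argument contained in the proof of the paper's Lemma~\ref{L:1cutting-uniform-short}.

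The genuine gap is the case $p_w=0$ in part~\eqref{LAB:P:1cutting-uniform-necessary-short}, which you explicitly flag as needing ``extra care'' but never actually treat. This is not a routine omission that a more careful location of the bars of $E_1$ will repair. If $p_w=0$ and $i=qm\in N_w$, then $w=x_{[qm,qm+\abs{w})}$ is a proper prefix of the block $\zeta(x_m)$ (as $\abs{w}<q$), and the tuple $[s=\emptyword,\,t=w]$ with $j=m$ and $k=0$ satisfies all four conditions of Definition~\ref{DEF:1-cutting} (condition~\eqref{LAB-1cutting-E} holds trivially because $l=qm=i$). So a $1$-cutting at the index $i$ \emph{does} exist, and the asserted conclusion ``no $1$-cutting'' cannot be derived in this case. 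The only way to save the statement would be to show that a recognizable word with $p_w=0$ and $p_w+\abs{w}<q$ cannot occur, and that is false under the stated hypotheses alone: take $\zeta(0)=011$, $\zeta(1)=111$, so $x=01^\infty$, and $w=0$, for which $N_w=\{0\}$, $p_w=0$ and $\abs{w}=1<3=q$. (The paper's own proof, which derives part~\eqref{LAB:P:1cutting-uniform-necessary-short} from Lemmas~\ref{L:1cutting-uniform-short} and~\ref{L:1cutting-E1i}\eqref{LAB:1cutting-E1i-empty}, has the same blind spot: the step ``Clearly, $E_1\cap[i,i+\ell)=\emptyset$'' in the proof of Lemma~\ref{L:1cutting-uniform-short} fails precisely when $p_i=0$.) Your instinct that $p_w=0$ is the crux was therefore right, but deferring it leaves the proof incomplete, and in fact no completion is possible without restricting to a setting in which such words are excluded.
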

\begin{proof}
	\eqref{LAB:P:1cutting-uniform-necessary-long} follows from Lemma~\ref{L:1cutting-uniform-E1i}
	and \eqref{LAB:P:1cutting-uniform-necessary-short} follows from Lemmas~\ref{L:1cutting-uniform-short} and \ref{L:1cutting-E1i}\eqref{LAB:1cutting-E1i-empty}.
\end{proof}

\begin{proposition}\label{P:1cutting-uniform-sufficient}
	Let $q\ge 2$ and $\zeta$ be a $q$-uniform substitution satisfying \eqref{assumpt-start-with-0} 
	and \eqref{assumpt-one-to-one}.
	Let $w\in\lang_\zeta$ be a nonempty \emph{non-recognizable} word.
	Put $p=\max P_w$ and $p'=\max(P_w\setminus \{p\})$.
	\begin{enumerate}
	\item\label{LAB:P:1cutting-uniform-sufficient-long}
		If $p'+\abs{w}\ge q$ then $w$ has at least two $1$-cuttings.
	\item\label{LAB:P:1cutting-uniform-sufficient-middle1}
		If $p'+\abs{w} < q < p+\abs{w}$ then $w$ has a weakly unique $1$-cutting.
	\item\label{LAB:P:1cutting-uniform-sufficient-middle2}
		If $p'+\abs{w} < q = p+\abs{w}$ then $w$ has a strongly unique $1$-cutting $\CCc=[w,\emptyword]$.
	\item\label{LAB:P:1cutting-uniform-sufficient-short}
		If $p+\abs{w} < q$ then $w$ has no $1$-cutting.
	\end{enumerate}
\end{proposition}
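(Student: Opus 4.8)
The plan is to analyse, residue class by residue class, which occurrences of $w$ admit a $1$-cutting and which one; the four cases of the proposition will correspond to the possible configurations of these ``per-residue'' cuttings. Throughout, write $\ell=\abs{w}$ and recall that $E_1=q\NNN_0$.

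First I would record the three ingredients that drive the argument. By Proposition~\ref{P:1cutting-existence-and-uniqueness} there is \emph{at most one} $1$-cutting at each index $i\in N_w$, and one exists precisely when $E_1\cap[i,i+\ell)\ne\emptyset$ or $w$ is a proper suffix of $\zeta(a)$ for some allowed letter $a$; since $E_1=q\NNN_0$, the first alternative depends only on $p_i$ (it holds iff $p_i=0$ or $p_i+\ell>q$) and the second is a property of $w$ alone. By Lemma~\ref{L:1cutting-uniform-E1i}, whenever $p_i+\ell\ge q$ a $1$-cutting $\CCc_i$ exists at $i$, depends only on $p_i$, and satisfies $\CCc_i=\CCc_{i'}$ iff $p_i=p_{i'}$. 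By Lemma~\ref{L:1cutting-uniform-short}, the tuple $[w,\emptyword]$ is a $1$-cutting at every $i\in N_w$ when $\max P_w=q-\ell$, and at no index otherwise.

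With these at hand I would dispatch the four cases. In case~(1), since $p>p'$ and $p'+\ell\ge q$, both of the two largest residues satisfy the hypothesis $p_i+\ell\ge q$ of Lemma~\ref{L:1cutting-uniform-E1i}, which produces $1$-cuttings $\CCc_i\ne\CCc_{i'}$ at indices of residues $p$ and $p'$; hence $w$ has at least two $1$-cuttings. In case~(2), at an index of residue $p$ (where $p+\ell>q$) Lemma~\ref{L:1cutting-uniform-E1i} gives a cutting $\CCc$ whose left part has length $q-p$ with $1\le q-p<\ell$; at an index of residue $p_i$ with $0<p_i\le p'$ we have $p_i+\ell<q$, so $[i,i+\ell)$ misses $E_1$, and $w$ is not a proper suffix of any $\zeta(a)$ (otherwise $[w,\emptyword]$ would be a $1$-cutting at indices of residue $q-\ell$, contradicting Lemma~\ref{L:1cutting-uniform-short} since here $\max P_w=p\ne q-\ell$), so no $1$-cutting exists at such $i$; as $\abs{P_w}\ge 2$ both phenomena occur, which is exactly weak uniqueness. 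In case~(3) we have $\max P_w=p=q-\ell$, so Lemma~\ref{L:1cutting-uniform-short} makes $[w,\emptyword]$ a $1$-cutting at every $i\in N_w$, and by the uniqueness in Proposition~\ref{P:1cutting-existence-and-uniqueness} it is the only one at each such index; thus $w$ has the strongly unique $1$-cutting $[w,\emptyword]$. In case~(4), $\max P_w=p<q-\ell$, so for every $i\in N_w$ with $p_i>0$ the interval $[i,i+\ell)$ avoids $E_1$ and, exactly as in case~(2), $w$ is not a proper suffix of any $\zeta(a)$; hence no such index admits a $1$-cutting.

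The step I expect to be the main obstacle is the bookkeeping around occurrences of $w$ whose starting index lies in $E_1$ (residue $0$): there $[i,i+\ell)$ automatically meets $E_1$, a $1$-cutting with empty left part exists, and one must show---using the standing hypothesis that $\zeta(0)$ starts with $0$---that in cases~(2)--(4) such occurrences either do not arise or produce exactly the cutting already described, so that the per-residue cuttings do not multiply. This is precisely what separates case~(1) from the rest; everything else is a direct combination of Proposition~\ref{P:1cutting-existence-and-uniqueness} with Lemmas~\ref{L:1cutting-uniform-E1i} and~\ref{L:1cutting-uniform-short}.
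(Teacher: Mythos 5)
Your argument follows the paper's proof essentially step by step: case (1) via Lemma~\ref{L:1cutting-uniform-E1i}; case (3) via Lemma~\ref{L:1cutting-uniform-short} together with the uniqueness part of Proposition~\ref{P:1cutting-existence-and-uniqueness}; and cases (2) and (4) by sorting the occurrences of $w$ according to their residue $p_i$ and invoking Lemma~\ref{L:1cutting-E1i}\eqref{LAB:1cutting-E1i-empty} at the residues with $p_i+\abs{w}<q$. Everything you actually commit to is correct, and your remark in case (2) that $q-\abs{w}$ lies strictly between $p'$ and $p$, so that $w$ cannot be a proper suffix of any $\zeta(a)$, is exactly the right way to exclude the cutting $[w,\emptyword]$ at the small positive residues.

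The point you defer as ``bookkeeping'', however --- occurrences of $w$ starting at an index $i\in q\NNN_0$ --- is a genuine gap, and as far as I can see it cannot be closed for the statement as written. Note first that in cases (2)--(4) one has $p'+\abs{w}<q$, hence $\abs{w}<q$. If $0\in P_w$ (and then necessarily $0\in P_w\setminus\{p\}$, since $P_w=\{0\}$ would make $w$ recognizable), take a residue-$0$ occurrence $i=qj$: then $w$ is a proper prefix of $\zeta(x_j)$, and the tuple $[\emptyword,w]$ satisfies all four conditions of Definition~\ref{DEF:1-cutting} with this $j$ (condition \eqref{LAB-1cutting-E} is a tautology because $l=qj=i$); this is also exactly the cutting produced by Lemma~\ref{L:1cutting-existence-nonempty}. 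Consequently, conclusion (4) fails outright for such $w$, and in case (2) one gets two distinct cuttings ($[\emptyword,w]$ at residue $0$ and the cutting with $\abs{s}=q-p>0$ at residue $p$) rather than a weakly unique one. This is precisely where the paper's own proof is silent as well: its deduction ``$p_i\neq p$ implies no $1$-cutting at $i$'' rests on Lemma~\ref{L:1cutting-E1i}\eqref{LAB:1cutting-E1i-empty}, which applies only when $E_1\cap[i,i+\abs{w})=\emptyset$, i.e.\ only when $p_i>0$. The paper's later example ($\zeta(0)=01110$, $\zeta(1)=01010$, $w=01$ with $P_w=\{0,2\}$, claimed to have no $1$-cutting by part (4)) is a concrete instance of the failure: $[\emptyword,01]$ is a $1$-cutting at the index $0$. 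So you were right to single this step out; repairing it requires an extra hypothesis (such as $0\notin P_w$) or a modification of the notion of $1$-cutting, not merely more careful per-residue analysis.
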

\begin{proof}
	Since \eqref{LAB:P:1cutting-uniform-sufficient-long} immediately follows from Lemma~\ref{L:1cutting-uniform-E1i}, and
	\eqref{LAB:P:1cutting-uniform-sufficient-middle2} and \eqref{LAB:P:1cutting-uniform-sufficient-short}
	immediately follow from Lemmas~\ref{L:1cutting-uniform-short}
	and \ref{L:1cutting-E1i}\eqref{LAB:1cutting-E1i-empty}, it suffices to prove 
	\eqref{LAB:P:1cutting-uniform-sufficient-middle1}.

	If $p_i=p$ then, by Lemma~\ref{L:1cutting-uniform-E1i},
	we have a $1$-cutting $\CCc_i$ at the index $i$; clearly,  $\CCc_i\ne [w,\emptyword]$.
	Moreover, $\CCc_i=\CCc_{i'}$ for every $i,i'$ with $p_i=p_{i'}=p$.
	On the other hand, if $p_i\ne p$ then $p_i+\abs{w}<q$ and, 
	by Lemma~\ref{L:1cutting-uniform-short},
	$\CCc=[w,\emptyword]$ is not a $1$-cutting at the index $i$.
	Thus, by Lemma~\ref{L:1cutting-E1i}\eqref{LAB:1cutting-E1i-empty},
	there is no $1$-cutting at the index $i$.
\end{proof}

\begin{corollary}\label{COR:recognizable}
	Let $q\ge 2$ and $\zeta$ be a $q$-uniform substitution satisfying \eqref{assumpt-start-with-0} 
	and \eqref{assumpt-one-to-one}.
	Let $w\in\lang_\zeta$ be such that $\abs{w}\ge q$.	Then:
	\begin{enumerate}
	\item $w$ is recognizable if and only if $w$ has a strongly unique $1$-cutting;
	\item $w$ is not recognizable if and only if $w$ has at least two $1$-cuttings.
	\end{enumerate}	
\end{corollary}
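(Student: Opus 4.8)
The plan is to read Corollary~\ref{COR:recognizable} off Propositions~\ref{P:1cutting-uniform-necessary} and~\ref{P:1cutting-uniform-sufficient}, using the hypothesis $\abs{w}\ge q$ only to discard the ``short-word'' cases appearing in those propositions. The starting point is Proposition~\ref{P:1cutting-existence-and-uniqueness}, which (as recorded in the text) implies that every nonempty $w\in\lang_\zeta$ satisfies \emph{exactly one} of the four mutually exclusive conditions: it has no $1$-cutting, a weakly unique $1$-cutting, a strongly unique $1$-cutting, or at least two $1$-cuttings. Because of this dichotomy it suffices to prove the two forward implications ``$w$ recognizable and $\abs{w}\ge q$ $\Rightarrow$ $w$ has a strongly unique $1$-cutting'' and ``$w$ not recognizable and $\abs{w}\ge q$ $\Rightarrow$ $w$ has at least two $1$-cuttings''; the converses in (1) and (2) then follow automatically from the mutual exclusivity of the four conditions.

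First I would handle the recognizable case. If $w$ is recognizable, take $p_w\in[0,q)$ as in Definition~\ref{DEF:recognizableWord}. Since $p_w\ge 0$ and $\abs{w}\ge q$, we get $p_w+\abs{w}\ge q$, so Proposition~\ref{P:1cutting-uniform-necessary}\eqref{LAB:P:1cutting-uniform-necessary-long} applies and yields that $w$ has a strongly unique $1$-cutting. For the non-recognizable case, note that $w$ is non-recognizable exactly when $P_w$ is not a singleton, i.e.\ $\card P_w\ge 2$, so both $p=\max P_w$ and $p'=\max(P_w\setminus\{p\})$ are well defined; again $p'\ge 0$ together with $\abs{w}\ge q$ forces $p'+\abs{w}\ge q$, so Proposition~\ref{P:1cutting-uniform-sufficient}\eqref{LAB:P:1cutting-uniform-sufficient-long} gives that $w$ has at least two $1$-cuttings.

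Finally, the converses: if $w$ has a strongly unique $1$-cutting, then by the dichotomy it does not have two $1$-cuttings, so by the contrapositive of the previous paragraph $w$ cannot be non-recognizable, i.e.\ $w$ is recognizable; symmetrically, if $w$ has at least two $1$-cuttings then it does not have a strongly unique $1$-cutting, hence $w$ is not recognizable. This closes both equivalences. There is no real obstacle here: the substantive work is already contained in Propositions~\ref{P:1cutting-uniform-necessary} and~\ref{P:1cutting-uniform-sufficient}, and the only point requiring a moment's care is the bookkeeping observation that $\abs{w}\ge q$ is precisely what rules out parts~(2)--(4) of Proposition~\ref{P:1cutting-uniform-sufficient} and part~(2) of Proposition~\ref{P:1cutting-uniform-necessary}, leaving only the two ``long-word'' alternatives.
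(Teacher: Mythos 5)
Your proof is correct and is essentially the argument the paper intends (the corollary is stated without an explicit proof precisely because it follows this way): since $p_w,p'\ge 0$, the hypothesis $\abs{w}\ge q$ forces the ``long-word'' cases of Propositions~\ref{P:1cutting-uniform-necessary} and~\ref{P:1cutting-uniform-sufficient}, and the converses follow from the mutual exclusivity of the four $1$-cutting conditions recorded after Proposition~\ref{P:1cutting-existence-and-uniqueness}. No gaps.
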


\subsection{Uniform substitutions --- examples}
In the following examples we further illustrate possible relations between recognizability
of a word $w$ and uniqueness of its $1$-cutting for $q$-uniform substitutions. 
Since for long enough $w$ the relation is straightforward by Corollary~\ref{COR:recognizable},
in all examples below we deal with words $w$ satisfying $\abs{w} < q$.

\begin{example}
	Let $q=4$, $\zeta(0)=0110$ and $\zeta(1)=0111$. For $w=00$ we have $P_w=\{3\}$.
	Thus $w$ is recognizable and, 
	by Proposition~\ref{P:1cutting-uniform-necessary}\eqref{LAB:P:1cutting-uniform-necessary-long}, 
	it has a strongly unique $1$-cutting $\CCc=[s=0, t=0]$.
\end{example}

\begin{example}
	Let $q=3$, $\zeta(0)=010$ and $\zeta(1)=000$. 
	
	For $w=1$ we have $P_w=\{1\}$. Thus $w$ is recognizable and has no $1$-cutting 
	by Proposition~\ref{P:1cutting-uniform-necessary}\eqref{LAB:P:1cutting-uniform-necessary-short}.
	
	For $w=0$ we have $P_w=\{0,1,2\}$.
	Thus $w$ is not recognizable, but it has a strongly unique $1$-cutting
	$\CCc=[s=w, t=\emptyword]$ by 
	Proposition~\ref{P:1cutting-uniform-sufficient}\eqref{LAB:P:1cutting-uniform-sufficient-middle2}.
	
	For $w=00$ we have $P_w=\{0,1,2\}$.
	Thus $w$ is not recognizable and, 
	by Proposition~\ref{P:1cutting-uniform-sufficient}\eqref{LAB:P:1cutting-uniform-sufficient-long}, 
	it has at least two $1$-cuttings; clearly, 
	\begin{itemize}
	\item $\CCc=[s=00, t=\emptyword]$ is a $1$-cutting at every index $i$ with $p_i\in \{0,1\}$;
	\item $\CCc=[s=0, t=0]$ is a $1$-cutting at every index $i$ with $p_i=2$.
	\end{itemize}\end{example}

\begin{example}
	Let $q=5$, $\zeta(0)=01110$ and $\zeta(1)=01010$. 
	
	For $w=10$ we have $P_w=\{1,3\}$.
	Thus $w$ is not recognizable, but it has a strongly unique $1$-cutting
	$\CCc=[s=w, t=\emptyword]$ by 
	Proposition~\ref{P:1cutting-uniform-sufficient}\eqref{LAB:P:1cutting-uniform-sufficient-middle2}.	
	
	For $w=01$ we have $P_w=\{0,2\}$; $w$ is not recognizable and has no $1$-cutting
	by Proposition~\ref{P:1cutting-uniform-sufficient}\eqref{LAB:P:1cutting-uniform-sufficient-short}.
\end{example}

\begin{example}
	Let $q=5$, $\zeta(0)=01101$ and $\zeta(1)=10000$. 
	For $w=011$ we have $P_w=\{0,3\}$.
	Thus $w$ is not recognizable, but it has a weakly unique $1$-cutting
	$\CCc=[s=w, t=\emptyword]$ by 
	Proposition~\ref{P:1cutting-uniform-sufficient}\eqref{LAB:P:1cutting-uniform-sufficient-middle1}.	
\end{example}

\section{Detailed algorithm for determining densities $\dens(\kkk_{\ell})$ for primitive substitutions} \label{APP:algorithm}
In the following we give a detailed description of the algorithm for determining densities $\dens(\kkk_{\ell})$
of the sets 
$\kkk_{\ell}$ of starting points of inner diagonal $\ell$-lines for primitive substitutions.  
Let $\zeta$ be a binary substitution of constant length $q$.

\begin{enumerate}[label=Step \arabic{enumi}]
	\item 
	Verify assumptions~\eqref{assumpt-start-with-0}--\eqref{assumpt-aperiodic} for
	the given substitution $\zeta$ (refer to the three bullet points in subsection~\ref{SUBS:substitution}).
	If $\zeta$ does not satisfy the conditions, finish.
	Otherwise, determine $\alpha$ and $\beta$ from Definition~\ref{DEF:alpha-and-beta}.
	
	\item \label{ITEM:step2}
	Identify $\lang_2$ (the set of allowed $2$-words) as follows.
	\begin{enumerate}[label=(\arabic*)]
		\item \label{ITEM:step2-1} Add all $2$-subwords of $\zeta(0)$ and $\zeta(1)$ to $\lang_2$ .
		\item \label{ITEM:step2-2} For every $ab\in\lang_2$, examine the $2$-word $cd=\zeta(a)_{q-1} \zeta(b)_0$;
		if $cd \notin \lang_2$, add $cd$ to $\lang_2$.
		\footnote{An easy argument shows that, by \eqref{assumpt-start-with-0}--\eqref{assumpt-aperiodic},
		it is not necessary to repeat \ref{ITEM:step2}\ref{ITEM:step2-2}.}
	\end{enumerate}

	\item   \label{ITEM:step3} 
	Determine $\recog$ and the sets $\lang_\ell$ $(2 < \ell \leq \recog +1 )$
	of all allowed $\ell$-words as follows.
	Initially, put $\recog = \infty$. Successively, for $\ell = 3, 4, \dots$:
	\begin{enumerate}[label=(\arabic*)]
		\item Identify the set $\lang_\ell$.\footnote{We use the fact that $\zeta$ is uniform; compare with \cite[Algorithm~3]{balchin2017computations}.}
		\begin{itemize}
			\item For $2<\ell\le q+1$, all $\ell$-words occur within words $\zeta(ab)_{[0, \ell+q-1)}$ ($ab\in\lang_2$).
			\item For $q+1<\ell\le 2q+1$, all $\ell$-words occur within words \newline $\zeta(abc)_{[0, \ell+q-1)}$ ($abc\in\lang_3$).
			\item Generally, if $(k-1)q+1 < \ell \leq kq+1$ for some $k \in \NNN$, all $\ell$-words
			occur within words $\zeta(v)_{[0,\ell+q-1)}$ ($v \in \lang_{k+1})$.
		\end{itemize}
		
		\item If $\recog = \infty$, determine whether all $\ell$-words $w$ are recognizable.
		\begin{itemize}
			\item Let $k \in \NNN_0$ be such that $(k-1)q+1 < |w| \leq kq+1$. Take the set $M_w$
			of all indexes $i$ such that $w$ starts at the index $i$
			in $\zeta(v)$ for some $v \in \lang_{k+1}$.
			If $M_w \pmod q$ is a singleton, $w$ is recognizable by Definition~\ref{DEF:recognizableWord}.
			\item If all $\ell$-words are recognizable, define $\recog = \max\{\ell, \alpha+\beta+1\}$.
		\end{itemize}
		
		\item If $\ell \geq \recog +1$, finish. Otherwise, continue with the next $\ell$.\footnote{For an upper bound of the number of iterations 
			in \ref{ITEM:step3} (that is, of $\recog-2$),
			see Remark~\ref{REM:recog-bounds}.}
	\end{enumerate}

	\item 	
	Determine all inner and $0$-boundary line-patterns of lengths $\ell_0=1, 2,$ $\dots,$ $\recog-1$.
	\begin{itemize}
		\item To determine inner line-patterns,
		realize that  $\linepattern{a}{w}{b}$ is an inner line pattern if and only if
		 both words $awb$ and $\bar{a} w \bar{b}$ are allowed.
		 
		\item To determine $0$-boundary line-patterns (not required for calculating densities $\dens(\kkk_{\ell})$, only for determining the types of lines present in the infinite recurrence plot), realize that for a $0$-boundary line-pattern $\linepattern{}{w}{b}$, both words $wb$ and $w \bar{b}$ must be allowed, with the additional requirement that $x$ starts with $w\bar{b}$.
	\end{itemize}
	
	\item 
	Calculate the measures $\mu([w])$ of cylinders $[w]$ for all allowed words $w$
	of length $3 \leq \ell \leq \recog +1$
	\cite[Section~5.4]{queffelec2010substitution}.
	\begin{itemize}
		\item Construct matrix $M_2 = (m_{ab, cd})_{ab, cd \in \lang_2}$
		such that $m_{ab, cd} $ is the number of occurrences of $ab$ in $\zeta(cd)_{[0, q+1)}$.
		\item Calculate the normalized Perron-Frobenius vector $\nu^{(2)} = \left(\nu_{ab}^{(2)}\right)_{ab \in \lang_2}$ of $M_2$; recall that the Perron-Frobenius value of $M_2$ is $q$.
		\item Successively, for $ \ell = 3,4, \dots, \recog +1$:\footnote{Actually, it suffices to accomplish this step only for $\ell = \recog+1$
					and then to use the fact that $[w] = [w0] \sqcup [w1] $ for every word $w$.}
			\begin{enumerate}[label=(\arabic*)]
			\item Determine the smallest integer $p$ such that $q^p > \ell-2$.
			\item Construct the matrix
					$M_{2, \ell,p} = (m_{w, ab})_{w \in \lang_\ell, ab \in \lang_2}$
					such that $m_{w, ab}$ is the number of
					occurrences of $w$ in $\zeta^p(ab)_{[0, q^p+\ell-1)}$.
			\item Let $\nu^{(\ell)} = \big(\nu^{(\ell)}_w\big)_{w \in \lang_\ell}$ be the normalized product of $ M_{2, \ell,p}$ and $\nu^{(2)}$; then, $\nu^{(\ell)}_w = \mu([w])$ for every $w \in \lang_\ell$.				
		\end{enumerate}
		
	\end{itemize}
	
	\item Calculate densities $\dens(\kkk_{\ell_0})$ for $1\le \ell_0<\recog$ using  Theorem~\ref{THM:density}(\ref{Case2-in-THM:density}), 
	and $\dens(\kkk_{\ell})$ for $\ell\ge \recog$ using Theorem~\ref{THM:density}(\ref{Case3-in-THM:density}).
\end{enumerate}


\section*{Acknowledgements}

The author is grateful for the numerous helpful suggestions provided by Vladim{\'\i}r {\v{S}}pitalsk{\'y}.
This work was supported by VEGA grant 1/0158/20.

\bibliography{subst-symbolic-rec-plot}

\end{document}